\definecolor{ffffff}{rgb}{1.0,1.0,1.0}
\definecolor{qqqqff}{rgb}{0.0,0.0,1.0}
\definecolor{ffqqqq}{rgb}{1.0,0.0,0.0}
\definecolor{zzzzqq}{rgb}{0.6,0.6,0.0}
\definecolor{marronet}{rgb}{0.6,0.2,0}
\definecolor{negre}{rgb}{0,0,0}
\definecolor{vermell}{rgb}{0.8,0.05,0.05}
\definecolor{blau}{rgb}{0.3,0.2,1.}
\definecolor{blauclar}{rgb}{0.,0.,1.}
\definecolor{grisfosc}{rgb}{0.5,0.5,0.5}
\definecolor{verd}{rgb}{0.05,0.7,0.05}
\definecolor{taronja}{rgb}{0.9,0.5,0.05}
\definecolor{vermellclar}{rgb}{1.,0.,0.}
\definecolor{verdet}{rgb}{0,0.8,0.1}
\definecolor{blauverd}{rgb}{0,0.4,0.2}
\definecolor{grisclar}{rgb}{0.6274509803921569,0.6274509803921569,0.6274509803921569}
\definecolor{cqcqcq}{rgb}{0.7529411764705882,0.7529411764705882,0.7529411764705882}
\definecolor{aqaqaq}{rgb}{0.6274509803921569,0.6274509803921569,0.6274509803921569}
\definecolor{xdxdff}{rgb}{0.49019607843137253,0.49019607843137253,1.}
\definecolor{uuuuuu}{rgb}{0.26666666666666666,0.26666666666666666,0.26666666666666666}
\newcommand*\circled[1]{\tikz[baseline=(char.base)]{
            \node[shape=circle,draw,inner sep=2pt] (char) {#1};}}
\newcommand*\squared[1]{\tikz[baseline=(char.base)]{
            \node[shape=rectangle,draw,inner sep=2.4pt] (char) {#1}; \node[shape=rectangle,draw,inner sep=1pt] (char) {#1};}}
\newcommand*\squaredGreek[1]{\tikz[baseline=(char.base)]{
            \node[shape=rectangle,draw,inner sep=2.4pt] (char) {$#1$}; \node[shape=rectangle,draw,inner sep=1pt] (char) {$#1$};}}
\newcommand{\R}{{\mathbb R}}       
\newcommand{\N}{{\mathbb N}}       
\newcommand{\Z}{{\mathbb Z}}       
\newcommand{\diam}{{\rm diam}}
\newcommand{\dist}{{\rm dist}}
\newcommand{\Dist}{{\rm D}}
\newcommand{\Sh}{{\mathbf {Sh}}} 
\newcommand{\SH}{{\mathbf {SH}}} 
\newcommand{\rf}[1]{{(\ref{#1})}}
\newcommand{\norm}[1]{{\left\| {#1} \right\|}}
\newtheorem{theorem}{Theorem}
\newtheorem*{theorem*}{Theorem}
\newtheorem*{conjecture*}{Conjecture}
\newtheorem{lemma}[theorem]{Lemma}
\newtheorem*{corollary*}{Corollary}
\newtheorem{proposition}[theorem]{Proposition}
\newtheorem{definition}[theorem]{Definition}
\newtheorem{remark}[theorem]{Remark}
\numberwithin{subsection}{section}
\numberwithin{theorem}{section}
\numberwithin{equation}{section}
\numberwithin{figure}{section}
\title{Triebel-Lizorkin regularity and bi-Lipschitz maps: composition operator and inverse function regularity}
\author{ Mart\'i Prats
\thanks{MP (De\-par\-ta\-ment de Ma\-te\-m\`a\-ti\-ques, U\-ni\-ver\-si\-tat Au\-t\`o\-no\-ma de Bar\-ce\-lo\-na, Catalonia): \texttt{marti.prats@uab.cat}} }
\begin{document}
\maketitle
\bibliographystyle{alpha}

\begin{abstract} 
We study the stability of Triebel-Lizorkin regularity of bounded functions and Lipschitz functions under bi-Lipschitz changes of variables and the regularity of the inverse function of a Triebel-Lizorkin bi-Lipschitz map in Lipschitz domains. To obtain the results we provide an equivalent norm for the Triebel-Lizorkin spaces with fractional smoothness  in uniform domains in terms of the first-order difference of the last weak derivative available averaged on balls. 
\end{abstract}

\section{Introduction}

Let $\Omega_1,\Omega_2$ be Lipschitz domains in $\R^d$ and let $f:\Omega_1\to \Omega_2$ be a bi-Lipschitz homeomorphism belonging to the non-homogeneous Triebel-Lizorkin space $F^s_{p,q}(\Omega_j)$, where $\norm{f}_{F^s_{p,q}(\Omega_j)}:=\inf_{g|_{\Omega_j}\equiv f}\norm{g}_{F^s_{p,q}(\R^d)}$. In this paper we give sufficient conditions for $f^{-1}$ to be in $F^s_{p,q}(\Omega_2)$ and conditions to ensure that the composition operator $T_f: g\mapsto g\circ f$ maps the function space $F^s_{p,q}(\Omega_{2})$ into $F^s_{p,q}(\Omega_{1})$. 

As it turns out, if $(s-1)p>d$ and $f\in F^s_{p,q}(\Omega_1)$, then the inverse function has the same regularity, and the composition operator map leaves the Triebel-Lizorkin regularity invariant as well. If $(s-1)p\leq d$, with $s>1$, a positive answer is also provided but we have to substitute  $F^s_{p,q}$ by $\mathbf{F}^s_{p,q}={F}^s_{p,q}\cap C^{0,1}$. The reason for this to happen is that the chain rule involves products of the derivatives of two mappings, so we require an algebra structure for the function spaces to grant that the indices remain invariant.

The precise result is the following:
\begin{theorem}\label{theoTriebelAdmissibleBanachBis}
Let $0<s<\infty$, $s\notin \N$, let $ 1\leq p < \infty$,  $ 1\leq q \leq \infty$ and $d\in\N$. Given bounded Lipschitz domains $\Omega_j\subset \R^d$ and a bi-Lipschitz function $f$ with $f(\Omega_1)= \Omega_2$, then 
\begin{align}\label{eqInvarianceUnderbiLipschitzTRIEBELBis}
f \in \mathbf{F}^{s}_{p,q}(\Omega_1) \mbox{ and } g \in \mathbf{F}^{s}_{p,q}(\Omega_2) & \implies g \circ f \in F^{s}_{p,q}(\Omega_1),
\end{align}
(see Figure \ref{figComposition}) and
\begin{equation}\label{eqInvarianceUnderInversionTRIEBELBis}
f \in \mathbf{F}^{s}_{p,q}(\Omega_1) \implies f^{-1} \in F^{s}_{p,q}(\Omega_2).
\end{equation}
 \end{theorem}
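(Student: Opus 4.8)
The plan is to reduce everything, via the averaged‑difference description of $F^s_{p,q}$ on uniform domains (the equivalent norm announced in the abstract), to pointwise $L^p$ estimates for averaged first differences of top‑order derivatives, and then to run an induction on $m:=\lfloor s\rfloor$ (so $s=m+\sigma$, $0<\sigma<1$). Concretely, on a uniform domain $\Omega$ one has $h\in F^s_{p,q}(\Omega)$ if and only if $h\in L^p(\Omega)$ and $\mathcal A^{\sigma}(\partial^{\alpha}h)\in L^p(\Omega)$ for every $|\alpha|=m$, where
\[
\mathcal A^{\sigma}(w)(x):=\left(\int_0^{\diam\Omega}\left(t^{-\sigma}\,\frac{1}{|B(x,t)\cap\Omega|}\int_{B(x,t)\cap\Omega}|w(y)-w(x)|\,dy\right)^{q}\frac{dt}{t}\right)^{1/q};
\]
bounded Lipschitz domains are uniform, so this applies to $\Omega_1,\Omega_2$. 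Two soft facts about $\mathcal A^{\sigma}$ will do most of the work. First, a \emph{change of variables}: if $\phi:\Omega_1\to\Omega_2$ is bi‑Lipschitz then the intrinsic balls $B(x,t)\cap\Omega_1$ and $B(\phi(x),t)\cap\Omega_2$ have comparable (to $t^d$) measures and $|\det D\phi|\simeq 1$, whence $\mathcal A^{\tau}(w\circ\phi)(x)\lesssim\mathcal A^{\tau}(w)(\phi(x))$ for every $0<\tau<1$, and, after another change of variables, $\norm{\mathcal A^{\tau}(w\circ\phi)}_{L^p(\Omega_1)}\lesssim\norm{\mathcal A^{\tau}(w)}_{L^p(\Omega_2)}$; only the bi‑Lipschitz character of $\phi$ is used here. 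Second, a \emph{Leibniz bound}: $\mathcal A^{\sigma}(uv)(x)\lesssim|v(x)|\,\mathcal A^{\sigma}(u)(x)+|u(x)|\,\mathcal A^{\sigma}(v)(x)+\mathcal B^{\sigma}(u,v)(x)$, where $\mathcal B^{\sigma}(u,v)$ is the analogous functional built from $|u(y)-u(x)|\,|v(y)-v(x)|$; iterating this over a finite product reduces matters to controlling, for each factor, its $L^{\infty}$ norm or its membership in a Triebel–Lizorkin space of smoothness $\ge\sigma$, together with the mixed terms, which are absorbed by the Triebel–Lizorkin multiplication theorem and the Sobolev embeddings $F^{a}_{p,q}(\Omega)\hookrightarrow F^{a'}_{p_1,q}(\Omega)$ and the Gagliardo–Nirenberg inequalities needed to balance integrabilities.

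I would prove \eqref{eqInvarianceUnderbiLipschitzTRIEBELBis} and \eqref{eqInvarianceUnderInversionTRIEBELBis} together by induction on $m$, strengthening \eqref{eqInvarianceUnderbiLipschitzTRIEBELBis} inside the induction to: for a bi‑Lipschitz $f\in\mathbf{F}^{s}_{p,q}(\Omega_1)$ and \emph{any} $h\in F^{\tau}_{p,q}(\Omega_2)$ with $0<\tau\le s$, $\tau\notin\N$, one has $h\circ f\in F^{\tau}_{p,q}(\Omega_1)$ (the Lipschitz hypothesis being needed only on the inner map; taking $h=g$, $\tau=s$ recovers \eqref{eqInvarianceUnderbiLipschitzTRIEBELBis}). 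The case $m=0$ of this strengthened statement is exactly the change of variables above. For the inductive step: $g\circ f$ is Lipschitz, hence in $L^p(\Omega_1)$, and since $f$ is bi‑Lipschitz the Sobolev chain rule expands $\partial^{\alpha}(g\circ f)$, $|\alpha|=m$, into a finite sum of Faà di Bruno terms $\bigl((\partial^{\beta}g)\circ f\bigr)\prod_{i=1}^{|\beta|}\partial^{\gamma_i}f_{l_i}$ with $1\le|\beta|\le m$, $|\gamma_i|\ge1$, $\sum_i|\gamma_i|=m$. In each term $\partial^{\beta}g\in F^{s-|\beta|}_{p,q}(\Omega_2)$ with $\lfloor s-|\beta|\rfloor\le m-1$, so the induction hypothesis gives $(\partial^{\beta}g)\circ f\in F^{s-|\beta|}_{p,q}(\Omega_1)$ (for $|\beta|=m$ this is just the change of variables, $s-|\beta|=\sigma<1$; for $|\beta|=1$ the factor is moreover in $L^{\infty}$ because $g$ is Lipschitz), and $\partial^{\gamma_i}f_{l_i}\in F^{s-|\gamma_i|}_{p,q}(\Omega_1)$, in $L^{\infty}$ when $|\gamma_i|=1$ because $f$ is Lipschitz. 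Since $|\beta|,|\gamma_i|\le m$, every factor has smoothness $\ge s-m=\sigma$; feeding this into the iterated Leibniz bound and the multiplication theorem — and using the change of variables to transfer the $\mathcal A^{\sigma}$ of the $(\partial^{\beta}g)\circ f$ factors to $\Omega_2$ — each term lies in $F^{\sigma}_{p,q}(\Omega_1)$, so by the equivalent norm $g\circ f\in F^{s}_{p,q}(\Omega_1)$. When $(s-1)p>d$ all the derivatives of $f$ occurring here are automatically Hölder continuous and the Lipschitz hypotheses are superfluous, recovering the remark in the introduction.

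For \eqref{eqInvarianceUnderInversionTRIEBELBis}, write $F:=f^{-1}:\Omega_2\to\Omega_1$. Since $f$ is bi‑Lipschitz, $|\det Df|$ stays in a compact subset of $(0,\infty)$, so matrix inversion agrees on a neighbourhood of the essential range of $Df$ with a $\Psi\in C^{\infty}(\R^{d\times d};\R^{d\times d})$ with bounded derivatives; the classical composition theorem with smooth functions then gives $\Psi\circ Df\in F^{s-1}_{p,q}(\Omega_1)\cap L^{\infty}$. Now $DF=(Df\circ F)^{-1}=(\Psi\circ Df)\circ F$, and the induction hypothesis for \eqref{eqInvarianceUnderInversionTRIEBELBis} at level $m-1$ (applied to $f\in\mathbf{F}^{s-1}_{p,q}(\Omega_1)$) together with $F\in C^{0,1}$ yields $F\in\mathbf{F}^{s-1}_{p,q}(\Omega_2)$; the composition statement at level $s-1$ (the induction hypothesis for \eqref{eqInvarianceUnderbiLipschitzTRIEBELBis}), applied to the bi‑Lipschitz map $F$ and the outer function $\Psi\circ Df$, gives $DF\in F^{s-1}_{p,q}(\Omega_2)$. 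Since $F\in L^{\infty}$ and all its first derivatives lie in $F^{s-1}_{p,q}(\Omega_2)$, the derivative (lifting) characterisation of $F^{s}_{p,q}$, itself a consequence of the equivalent norm, gives $f^{-1}=F\in F^{s}_{p,q}(\Omega_2)$. The case $m=0$ is trivial, $f^{-1}$ being bi‑Lipschitz, hence in $C^{0,1}(\Omega_2)\subset F^{s}_{p,q}(\Omega_2)$ for $0<s<1$.

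The main obstacle is the estimation of the Faà di Bruno terms: one must verify that for every such term the joint smoothness and integrability of the factors — constrained by $\sum_i|\gamma_i|=m$ and $|\beta|\le m$, and crucially by the $L^{\infty}$ bound on $Df$ coming from $f\in C^{0,1}$ (and the analogous bound on $\nabla g$), which through Gagliardo–Nirenberg also improves the integrability of the higher‑order factors $\partial^{\gamma_i}f$ and $(\partial^{\beta}g)\circ f$ beyond the plain Sobolev embedding — are \emph{exactly} enough to put the product back in $F^{\sigma}_{p,q}$ with no loss of smoothness. This is precisely where the algebra structure of $\mathbf{F}^{s}_{p,q}=F^{s}_{p,q}\cap C^{0,1}$ is indispensable; it is also where the averaged‑first‑difference form of the equivalent norm pays off, since it makes the bi‑Lipschitz change of variables for the last available derivative completely transparent.
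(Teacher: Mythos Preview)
Your overall strategy (Fa\`a di Bruno expansion, averaged first-difference norm, bi-Lipschitz change of variables, Gagliardo--Nirenberg interpolation) is the right one, and it coincides with what the paper does. But your induction scheme has a genuine gap: the ``strengthened'' composition statement you induct on---that for bi-Lipschitz $f\in\mathbf{F}^{s}_{p,q}$ and \emph{any} $h\in F^{\tau}_{p,q}$ (no Lipschitz assumption on $h$) one has $h\circ f\in F^{\tau}_{p,q}$---is too strong for $\tau=s$. In the Fa\`a di Bruno expansion of $\nabla^{m}(h\circ f)$, the term with $|\beta|=1$ is $(\nabla h\circ f)\cdot\nabla^{m}f$; here $\nabla^{m}f\in F^{\sigma}_{p,q}$ carries the full $L^p$ integrability, so the other factor must be bounded, i.e.\ $\nabla h\in L^{\infty}$. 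The paper's explicit estimate makes this visible: the bound reads $\norm{g\circ f}_{F^{s}_{p,q}}\lesssim C_{f}\bigl(\norm{g}_{F^{s}_{p,q}}\norm{\nabla f}_{\infty}^{s}+\norm{\nabla g}_{\infty}\norm{f}_{F^{s}_{p,q}}\bigr)$, and the second summand cannot be removed. So at the top level you do need $g\in C^{0,1}$, and your induction hypothesis, as stated, is not available.

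This gap resurfaces in your inverse argument. You want to conclude $DF=(\Psi\circ Df)\circ F\in F^{s-1}_{p,q}$ by applying the composition statement at level $s-1$ to the outer function $\Psi\circ Df$, but $\Psi\circ Df$ is only in $F^{s-1}_{p,q}\cap L^{\infty}$, not in $C^{0,1}$, so neither the original theorem nor your (invalid) strengthening applies. The paper closes this exactly where your argument breaks, by proving a separate auxiliary statement (its Lemma~2.8): if the outer function is merely in $F^{s-1}_{p,q}\cap L^{\infty}$, the composition still works provided the inner bi-Lipschitz map lies in the \emph{better} space $\mathbf{F}^{s-1}_{p(s-1)/(s-2),q}$. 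The improved integrability of $f^{-1}$ at level $s-1$ is then obtained from the interpolation inequality (Lemma~2.6, a precise Gagliardo--Nirenberg statement: $\norm{\nabla^{j}f}_{F^{\sigma/M}_{p(s-1)/(j+\sigma/M-1),r}}\lesssim\norm{f}_{F^{s}_{p,q}}^{\theta}\norm{\nabla f}_{\infty}^{1-\theta}$) together with the inverse result at level $s-1$ with the shifted index $p(s-1)/(s-2)$. In other words, the correct induction is not on $m$ at fixed $p$, but involves a coupled shift of the integrability index. For the composition itself the paper in fact avoids induction altogether: it writes out the first-order difference $\Delta_{h}\nabla^{k}(g\circ f)$ in a fully telescoped form (products of $\Delta_{h}\nabla^{\alpha_{r}}f$ and undifferenced factors), then applies H\"older three times with explicit exponents $p_{0},p_{j},u_{r},q_{r}$ chosen so that Lemma~2.6 applies to each factor; this is exactly the ``exponent balance'' you flag as the main obstacle but do not carry out.
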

Note that if $(s-1)p{>} d$  then $\mathbf{F}^s_{p,q}=F^s_{p,q}$. 

The reason behind the rather unnatural assumption $s\notin\N$ in Theorem \ref{theoTriebelAdmissibleBanachBis} is the use of first-order differences to characterize the function space, since otherwise one needs to use second-order differences and the techniques used throughout this paper are not enough. However, the composition result holds for $s\in\N$ in the one-dimensional case at least (see \cite{BourdaudMoussaiSickelTL}) and the inverse function result holds also in the one-dimensional case for $p=q$ and $sp>1$ (see \cite[Lemma 2.11]{AstalaPratsSaksman}), and for $s\in \N$ and $q=2$ in arbitrary dimensions, that is, in the Sobolev scale  (see Lemma \ref{lemSobolevAdmissibleSpace}). {For the Sobolev scale ($q=2$, $s\in\N$) in the higher dimensional case we refer the reader to \cite{CampbellHenclKonopecky}, and Lemma \ref{lemSobolevAdmissibleSpace} below.} The result also holds for non-integer H\"older spaces, see Lemma \ref{lemCsAdmissibleBanachBis}.

We can conjecture that Theorem \ref{theoTriebelAdmissibleBanachBis} remains true in uniform domains, see Remark \ref{remDomainType} for a discussion. {Recall that bounded Lipschitz domains are uniform.}

To obtain the preceding result, we use elementary techniques such as H\"older inequalities, but we need to build on first-order differences to be able to use the change of variables. We will use the following characterization, {which is proven for uniform domains, see Section \ref{secUniform} for the definitions.}
\begin{theorem}\label{theoDifference}
Let $\Omega$ be a uniform domain, let $1\leq p<\infty$, $1\leq q\leq \infty$, $s=k+\sigma$ with $0<\sigma<1$, $k\in \N_0:=\N\cup \{0\}$ and consider the auxiliary index $1\leq u\leq \infty$ so that $\sigma>\frac{d}{p\wedge q}-\frac du$. Then 
\begin{equation}\label{eqEquivalentNormFSPQ}
\norm{f}_{F^s_{p,q}(\Omega)}\approx \norm{f}_p + \left(\int_\Omega \left(\int_0^1 \frac{\left(  \int_{B(x,t)\cap\Omega} |\nabla^k f(x)-\nabla^k f(y)|^u dy\right)^\frac qu}{t^{\left(\sigma+\frac du\right)q}}  \frac{dt}t \right)^\frac pq dx\right)^\frac1p.
\end{equation}
\end{theorem}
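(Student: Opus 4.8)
The plan is to transfer the corresponding Euclidean characterization of $F^s_{p,q}(\R^d)$ to the uniform domain $\Omega$ via an extension operator, the uniform geometry being needed only near $\partial\Omega$. Recall that, for $s=k+\sigma$ with $0<\sigma<1$, the space $F^s_{p,q}(\R^d)$ is characterized by exactly the same ball means of first differences of $\nabla^k$ as in the right-hand side of \eqref{eqEquivalentNormFSPQ} with $\Omega$ replaced by $\R^d$; this is classical (Triebel), and the restriction $\sigma>\frac d{p\wedge q}-\frac du$ is precisely what guarantees its validity, as it keeps $u$ below the Sobolev exponent of $F^\sigma_{p,q}$, making the $L^u$ average of differences over a ball a controlled quantity. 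Write $N_\Omega(f)$ for the difference term in \eqref{eqEquivalentNormFSPQ} and $N_{\R^d}$ for its Euclidean analogue. The bound $\norm{f}_{L^p(\Omega)}+N_\Omega(f)\lesssim\norm{f}_{F^s_{p,q}(\Omega)}$ is the easy half: given any $g\in F^s_{p,q}(\R^d)$ with $g|_\Omega=f$ and almost minimal norm, one applies the Euclidean characterization to $g$ and then shrinks the inner integration set from $B(x,t)$ to $B(x,t)\cap\Omega$ and the outer ones from $\R^d$ to $\Omega$, using that $\nabla^kg=\nabla^kf$ a.e.\ on $\Omega$; taking the infimum over $g$ closes this direction.

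The reverse inequality $\norm{f}_{F^s_{p,q}(\Omega)}\lesssim\norm{f}_{L^p(\Omega)}+N_\Omega(f)$ is the substantial one. Here I would construct, in the spirit of Jones' extension for uniform domains, a linear extension $g=Ef$ of $f$ to $\R^d$ having $k$ weak derivatives: on each Whitney cube $Q$ of $\Omega^c$ lying within a bounded multiple of its sidelength from $\partial\Omega$, set $Ef|_Q$ equal to an averaged Taylor polynomial of $f$ of degree $k$ built on a comparable reflected interior Whitney cube $Q^\ast$, glue the pieces with a partition of unity, and cut off far from $\Omega$. One then estimates $N_{\R^d}(Ef)$ by splitting a pair $(x,y)$ with $|x-y|<t$ into three cases: both points in $\Omega$, where the contribution is bounded directly by $N_\Omega(f)$; both in $\Omega^c$; and $x,y$ separated by $\partial\Omega$. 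In the latter two cases $\nabla^kEf(x)-\nabla^kEf(y)$ is rewritten as a telescoping sum along a chain of Whitney cubes joining the cubes of $x$ and $y$, a chain whose length and cube sizes are controlled by $|x-y|$ and by the distances of $x$ and $y$ to $\partial\Omega$ thanks to the defining property of uniform domains; each increment is then a difference of polynomial averages of $\nabla^kf$ over comparable balls inside $\Omega$, and after summing along the chain and integrating in $t$ and $x$ --- Hölder's inequality in several places --- the whole expression is absorbed into $N_\Omega(f)$, the term $\norm{f}_{L^p(\Omega)}$ accounting for the cut-off piece. Since the Euclidean characterization gives $\norm{Ef}_{F^s_{p,q}(\R^d)}\approx\norm{Ef}_{L^p}+N_{\R^d}(Ef)$ and $\norm{f}_{F^s_{p,q}(\Omega)}\le\norm{Ef}_{F^s_{p,q}(\R^d)}$, this finishes the proof. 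When $k\geq1$ the construction of $Ef$ also needs $\norm{\nabla^jf}_{L^p(\Omega)}$ for $j\le k$, which are recovered from $\norm{f}_{L^p(\Omega)}$ and $N_\Omega(f)$ by Poincaré-type inequalities on $\Omega$ (a chaining argument already bounds the oscillation of $\nabla^kf$ by $N_\Omega(f)$); and for $q=\infty$ every integral $\int_0^1(\cdot)\,\frac{dt}t$ is replaced by a supremum over $0<t<1$ with no change in the scheme.

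The main obstacle is exactly the straddling estimate in the hard direction: organizing the chain of Whitney cubes and bounding the resulting multi-scale sum of averaged differences of $\nabla^kf$ by the single quantity $N_\Omega(f)$, uniformly in $x$ and $t$, while keeping track of the interaction between the scale $t$, the separation $|x-y|$ and the distances of $x$ and $y$ to $\partial\Omega$. This is the only place where the uniform (cigar/chain) condition on $\Omega$ is genuinely used, and it is where the otherwise elementary bookkeeping with Hölder's inequality becomes delicate.
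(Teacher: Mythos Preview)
Your proposal is correct and follows essentially the same route as the paper: reduce to the Euclidean characterization by ball means of first differences (Triebel), then show that the Peter Jones extension $\Lambda_k$ built from degree-$k$ polynomials on reflected Whitney cubes is bounded from the intrinsic difference norm on $\Omega$ to the corresponding norm on $\R^d$, splitting into the interior/exterior/straddling cases and controlling the latter two by telescoping along $\varepsilon$-admissible chains. You have also correctly located the crux --- the multi-scale bookkeeping of the chain sums, which the paper isolates as two technical lemmas --- and rightly flagged the need to recover the intermediate $W^{k,p}$ norms from $\norm{f}_{L^p}$ and $N_\Omega(f)$ (the paper's Definition~4.1 in fact carries the full $W^{k,p}$ norm, so this reduction is a genuine extra step in the statement as written).
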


Theorem \ref{theoDifference} is proven in Section \ref{secExtension}. The idea is to check that this norm is equivalent to the restriction of the usual Fourier definition for the Triebel-Lizorkin scale for $\Omega=\R^d$ (see \cite{TriebelTheoryIII}). Thus, it is enough to find a suitable extension operator such that the Triebel-Lizorkin norm of the extended function is bounded by the right-hand side of \rf{eqEquivalentNormFSPQ}. As a matter of fact, in Theorem 4.7 below we will see that the Peter-Jones extension operator for the Sobolev scale is also an extension operator for the relevant function spaces, following a similar reasoning to \cite{PratsSaksman}.

{The introduction of uniform domains at this step is quite natural. At the end of the day, most extension operators are defined by a discretization of the complementary domain by associating to a given patch the value defined at a certain symmetrization of this patch (see \cite{Rychkov} for a different approach). Usually, this patches are Whitney cubes. The connectivity properties of this symmetrizations for neighboring patches are used to ensure that no new jumps are created when gluing together the discretized reflections. In Lipschitz domains one can use the trivial symmetrization (at least locally), but as showed by Peter Jones in \cite{Jones}, this procedure can be extended quite naturally to uniform domains and the proof does not get essentialy more involved.}

At this point we want to remark that the Peter Jones extension operator defined \cite{Jones} for Sobolev spaces with smoothness one in uniform domains is also an extension operator for Triebel-Lizorkin spaces on domains with smoothness below one in interior corkscrew domains. This fact was unnoticed in \cite{PratsSaksman}, although the proof there can be easily modified to cover this quite general setting. See Theorem \ref{theoExtensionOperator0} and Remark \ref{remProofExtension} below for a discussion on this matter.

The issue of stability of the composition operators has already been discussed thoroughly in the literature.
See \cite{ClopFaracoRuiz, HenclKoskelaComposition, OlivaPrats} for results concerning the linear composition operator for quasiconformal mappings, and \cite{HenclKoskela} for mappings of finite distortion, in both cases the authors study the case of smoothness smaller or equal than one. It is interesting to note that for critical Bessel potential spaces i.e. $F^{d/p}_{p,2}$ with $\frac dp\leq 1$, every quasiconformal mapping preserves the function space. Quasiconformal mappings are known to have  H\"older regularity below one, determined by their distortion. This is much weaker than bi-Lipschitz, and it is natural to wonder whether Theorem \ref{theoDifference} can be also weakened in such a way for function spaces with regularity greater than one.

We also refer to \cite{Dahlberg, Vodopyanov, BourdaudSickel, Bourdaud, BourdaudMoussaiSickel, BourdaudMoussaiSickel2} for the study of the non-linear  composition operator $\widetilde{T}_fg=f\circ g$ and regularity in the Besov and Triebel-Lizorkin scales. It is worthy to note the result in \cite{Dahlberg}, where it is seen that for $d\geq 3$ and $f\in C^\infty(\R)$ then $\widetilde{T}_f:W^{s,p}(\R^d)\to W^{s,p}(\R^d)$ implies that $f=c {\rm Id}$ whenever $s\in\N$, $s\geq 2$, $1\leq p<\infty$ with $s<\frac dp$. 

The author of the present paper is unaware of any study concerning the Triebel-Lizorkin regularity of the inverse of bi-Lipschitz mappings, {see \cite{CampbellHenclKonopecky} for the inverse mapping theorem in the Sobolev case.}

\subsection{Notation}
Throughout this paper we will write $C$ for constants which may change from one occurrence to the next. If we want to make clear in which parameters $C$ depends, we will add them as a subindex. In the same spirit, when comparing two quantities $x_1$ and $x_2$, we may write $x_1\lesssim x_2$ instead of $x_1\leq C x_2$, and $x_1\lesssim_{p_{1},\dots, p_{j}} x_2$ for $x_1\leq C_{p_{1},\dots, p_{j}} x_2$, meaning that the constant depends on all these parameters.

Given $1\leq p\leq \infty$ we write $p'$ for its H\"older conjugate, that is $\frac{1}{p}+\frac{1}{p'}=1$.

Given $x\in \R^d$ and $r>0$, we write $B(x,r)$ or $B_r(x)$ for the open ball centered at $x$ with radius $r$ and $Q(x,r)$ for the open cube centered at $x$ with sides parallel to the axis and side-length $2r$. Given any cube $Q$, we write $\ell(Q)$ for its side-length, and $rQ$ will stand for the cube with the same center but enlarged by a factor $r$. We will use the same notation for one dimensional  balls and cubes, that is, intervals. {Given two bounded sets $A$ and $B$, we write $\Dist(A,B):=\diam(A)+\diam(B)+\dist(A,B)$, and we call it the \emph{long distance} of $A$ and $B$.}

\begin{definition}\label{defLipschitzDomain}
Let $\delta,R>0$, $d\geq 2$. We say that a domain $\Omega\subset \R^d$ is a $(\delta,R)$-Lipschitz domain (or just a Lipschitz domain when the constants are not important) if for every point $z\in\partial\Omega$, there exists a cube $\mathcal{Q}=Q(0,R)$ and a Lipschitz function $A_z:\R^{d-1}\to\R$ supported in $[-4R,4R]^{d-1}$ such that $\norm{A_z'}_{L^\infty}\leq \delta$ and,  possibly after a translation that sends $z$ to the origin and a rotation, we have that
$$\mathcal{Q}\cap \Omega = \{(x,y)\in \mathcal{Q}: y>A_z(x)\}.$$

If $d=1$ we say that $\Omega\subset \R$ is a Lipschitz domain if $\Omega$ is an open interval.
\end{definition}

The natural numbers are denoted by $\N$ if $0$ is not included, and $\N_0=\N \cup \{0\}$. The multiindex notation for exponents and derivatives will be used: for $\alpha\in\Z^d$  its {\em modulus} is $|\alpha|=\sum|\alpha_i|$ and its {\em factorial} is $\alpha!=\prod(\alpha_i!)$. Given two multiindices $\alpha, \gamma\in \Z^d$ we write $\alpha\leq \gamma$ if $\alpha_i\leq \gamma_i$ for every $i$. We say $\alpha<\gamma$ if, in addition, $\alpha\neq\gamma$. 
For $x\in\R^d$ and $\alpha\in \Z^d$ we write $x^\alpha:=  \prod x_i^{\alpha_i} $. A similar notation is used for directional weak derivatives: $D^\alpha:=  \prod \partial_{x_i}^{\alpha_i} $.

\section{Composition and inverse function theorems}

In this section we will show that the function spaces considered are stable under composition with bi-Lipschitz mappings of the same space and they satisfy an inverse function theorem.

First we need a lemma on a generalized chain rule. For this purpose we recover the multivariate version of  Fa\`a di Bruno's formula (see \cite[Lemma 1.3.1]{KrantzParks} for the one-dimensional case), whose proof is a mere exercise on induction. Given a multiindex $\vec{i}\in \N_0^D$, where $\N_0{=\N \cup \{0\}}$,  we define $m(\vec{i})\in \{1,\cdots,D\}^{|\vec{i}|}$ as the vector whose components are non-decreasing (i.e, $m(\vec{i})_\ell \leq m(\vec{i})_{\ell+1}$), and such that 
$$\#\{j: m(\vec{i})_\ell= j\} = \vec{i}_j.$$
For instance, $m(3,2)=(1,1,1,2,2)$, and $m(4,0,1)=(1,1,1,1,3)$.
\begin{lemma}[Chain rule]\label{lemChainRule}
Given $f=(f^1,\cdots,f^{D}):\R^d\to\R^D$, $g:\R^D\to\R$ with $f^i \in W^{M,\infty}(\R^d)$ and {given a bi-Lipschitz function}  $g\in W^{M,\infty}(\R^D)$ and $\vec{k}\in \N_0^d$ with $|\vec{k}|=M$, there exist appropriate constants such that
\begin{equation}\label{eqGeneralizedChainRule}
D^{\vec{k}}(g\circ f)= \sum_{\substack{1\leq |\vec{i}|\leq M \\ \{\alpha_j\}_{j=1}^{|\vec{i}|} \subset \N_0^d\setminus\{\vec{0}\}: \sum |\alpha_j|=M }}C_{\vec{k}, \vec{i},\{\alpha_j\}} D^{\vec{i}}g(f)  \prod_{\ell=1}^{|\vec{i}|} D^{\alpha_\ell}f^{m(\vec{i})_\ell}
\end{equation}
almost everywhere.
\end{lemma}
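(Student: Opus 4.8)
The plan is to prove the multivariate Faà di Bruno formula by induction on $M=|\vec k|$. The base case $M=1$ is just the ordinary multivariable chain rule: $\partial_{x_i}(g\circ f)=\sum_{j=1}^D (\partial_{y_j}g)(f)\,\partial_{x_i}f^j$, which is exactly the right-hand side of \rf{eqGeneralizedChainRule} with $|\vec i|=1$, the single multiindex $\alpha_1=e_i$, and the sum running over $\vec i\in\{e_1,\dots,e_D\}$. To make the induction step legitimate at the level of weak derivatives, I would first record that under the hypotheses $f^i\in W^{M,\infty}$, $g\in W^{M,\infty}$ every product appearing on the right-hand side is a bounded function and a legitimate weak derivative, so that all manipulations (Leibniz rule, chain rule applied to $D^{\vec i}g$) are justified almost everywhere; this is the routine ``mere exercise'' part alluded to in the text.

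For the induction step, assume \rf{eqGeneralizedChainRule} holds for a multiindex $\vec k$ with $|\vec k|=M$ and apply $\partial_{x_r}$ to both sides. Differentiating a generic summand $D^{\vec i}g(f)\prod_{\ell=1}^{|\vec i|}D^{\alpha_\ell}f^{m(\vec i)_\ell}$ by the Leibniz rule produces two types of terms. The first comes from differentiating the factor $D^{\vec i}g(f)$: by the chain rule this yields $\sum_{j=1}^D D^{\vec i+e_j}g(f)\,\partial_{x_r}f^j$ times the same product $\prod_\ell D^{\alpha_\ell}f^{m(\vec i)_\ell}$, which after absorbing the new factor $\partial_{x_r}f^j=D^{e_j}f^j$ is a term with $|\vec i|\to|\vec i|+1$, a new multiindex $\alpha_{|\vec i|+1}=e_j$ of modulus $1$, and total modulus $\sum|\alpha_\ell|=M+1$. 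The second type comes from differentiating one of the factors $D^{\alpha_\ell}f^{m(\vec i)_\ell}$, replacing $\alpha_\ell$ by $\alpha_\ell+e_r$; this keeps $|\vec i|$ fixed, keeps the number of $\alpha$'s fixed, and raises the total modulus to $M+1$. In both cases one checks $1\le|\vec i|\le M+1$, that the multiindices $\alpha_j$ remain in $\N_0^d\setminus\{\vec 0\}$, and that the collection $\{m(\vec i)_\ell\}$ of ``which component of $f$'' is correctly indexed by the reordering $m(\vec i)$ — this last bookkeeping is where one must be careful to collect terms so that the new index vector $\vec i+e_j$ is handled consistently with the definition of $m(\cdot)$. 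The constants $C_{\vec k,\vec i,\{\alpha_j\}}$ are then whatever combinatorial coefficients result from summing contributions; since the statement only claims existence of ``appropriate constants,'' I would not compute them explicitly, merely note they are finite and depend only on the displayed indices.

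The main obstacle is purely notational rather than conceptual: verifying that after differentiating and re-collecting, every term produced is of exactly the form on the right-hand side of \rf{eqGeneralizedChainRule} for $|\vec k|=M+1$, with the reordering function $m(\vec i)$ applied consistently and with no term left over. In particular one must check that a term obtained by ``adding a new $\alpha_{|\vec i|+1}=e_j$'' and a term obtained from a higher $\vec i$ by ``lengthening an existing $\alpha_\ell$'' can produce the same product of derivatives and hence get merged into a single coefficient; tracking this merging is the only delicate point. Since the paper explicitly calls this ``a mere exercise on induction'' and only needs the structural form of the formula (not the coefficients), I would present the base case, set up the Leibniz-plus-chain-rule computation on a generic summand, identify the two families of resulting terms and their index shifts, and remark that reindexing gives precisely \rf{eqGeneralizedChainRule} with $M+1$ in place of $M$, leaving the explicit constants and the purely combinatorial consistency check to the reader.
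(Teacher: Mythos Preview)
Your proposal is correct and follows exactly the approach the paper indicates: the paper does not write out a proof at all, merely stating that the formula ``is a mere exercise on induction'' and citing the one-dimensional case in Krantz--Parks. Your induction on $M=|\vec k|$, with the Leibniz-plus-chain-rule expansion in the inductive step producing the two families of terms (raising $|\vec i|$ by one via a new $\alpha=e_j$, or raising an existing $\alpha_\ell$ by $e_r$), is precisely the intended argument, and your remarks on weak-derivative justification under the $W^{M,\infty}$ hypotheses are consistent with the paper's own Remark following the lemma.
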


\begin{remark}\label{remChainRule}
The chain rule \rf{eqGeneralizedChainRule} can be applied also to functions with weaker a-priori conditions. Note that given a bi-Lipschitz function $f$ and $g\in W^{M,1}_{\rm loc}$, for $|\vec{i}|\leq M-1$ we have that  $D(D^{\vec{i}}g(f))=D(D^{\vec{i}}g)(f)\cdot Df$ by \cite[Theorem 2.2.2]{Ziemer}. Thus, to prove \rf{eqGeneralizedChainRule} by induction for functions in $W^{M,1}_{\rm loc}$, one only needs to check that the product rule for the derivatives applies at each step. For this to hold it is enough that for $|\vec{k}|\leq M$ the right-hand side of \rf{eqGeneralizedChainRule} is locally in $L^1$, see \cite[(7.18)]{GilbargTrudinger}.
\end{remark}

\subsection{Toy case: H\"older continuity}
\begin{definition}\label{defHolderZygmund}
Given an open set $U\subset\R^d$,  {a measurable function $f:U\to \R$,} and $0<s<1$, we say that 
$f\in\dot{\mathcal{C}}^s(U)$ if 
$$\norm{f}_{\dot{\mathcal{C}}^s(U)}:=\sup_{x, y\in U} \frac{|f(x)-f(y)|}{|x-y|^s}<\infty.$$
For $k\in \N$ and $k<s{<} k+1$, we say that $f\in\dot{\mathcal{C}}^s(U)$ if $\nabla^k f:=(\partial_1^k f, \partial_1^{k-1}\partial_2 f,\cdots, \partial_{d}^{k} f)$ (that is, a vector with all the partial derivatives of order $k$) is in $\dot{\mathcal{C}}^{s-k}(U)$, with
 $$\norm{f}_{\dot{\mathcal{C}}^s(U)}:=\norm{\nabla^k f}_{\dot{\mathcal{C}}^{s-k}(U)}.$$
\end{definition}

One can define Banach spaces of functions modulo polynomials using the previous seminorms. However, the standard  non-homogeneous H\"older-Zygmund spaces are more suitable for our purposes:
\begin{definition}\label{defHolderNonhomogeneousNorm}
For $0<s<\infty$ with $s\notin\N$, we say that  $f\in \mathcal{C}^{s}(U)$ if $f\in L^\infty \cap \dot{\mathcal{C}}^s(U)$. We define the norm
$$\norm{f}_{\mathcal{C}^{s}(U)}:=\norm{f}_{L^\infty(U)}+\norm{f}_{\dot{\mathcal{C}}^s(U)}.$$
\end{definition}

Most likely the following results appear in the literature,  but we were not able to locate them, so we include these results for the sake of completeness. Moreover, the main steps of the proof of Theorem \ref{theoTriebelAdmissibleBanachBis} appear already in the H\"older scale: 
\begin{lemma}\label{lemCsAdmissibleBanachBis}
Let $1<s$, $s\notin\N$ and $d\in\N$.   Assume that  $\Omega_j\subset \R^d$, $j=1,2$, are open sets. Let  $f:\Omega_1 \twoheadrightarrow \Omega_2$ be bi-Lipschitz with $f\in \mathcal{C}^{s}(\Omega_1.)$ Then for any $ g\in \mathcal{C}^{s}(\Omega_2)$ we have
\begin{align}\label{eqInvarianceUnderbiLipschitzHOLD}
g \circ f \in \mathcal{C}^{s}(\Omega_1),\\
\nonumber \nabla g \circ f \in \mathcal{C}^{s-1}(\Omega_1),
\end{align}
and
\begin{equation}\label{eqInvarianceUnderInversionHOLD}
f^{-1} \in \mathcal{C}^{s}(\Omega_2).
\end{equation}
 \end{lemma}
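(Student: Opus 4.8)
The plan is to prove the three claims in the order \eqref{eqInvarianceUnderbiLipschitzHOLD} (first the composition, then its gradient version) and finally \eqref{eqInvarianceUnderInversionHOLD}, all by induction on the integer part $k=\lfloor s\rfloor$, using the generalized chain rule of Lemma~\ref{lemChainRule} (valid here by Remark~\ref{remChainRule}, since a bi-Lipschitz $f\in\mathcal C^s$ has all weak derivatives up to order $k$ bounded, and similarly for $g$). The base case is $1<s<2$, i.e. $k=1$: here $D(g\circ f)=(\nabla g)(f)\cdot Df$ a.e., and we must check this product lies in $\dot{\mathcal C}^{s-1}$. The key structural fact is that $\mathcal C^{s-1}\cap L^\infty$ is an algebra for $0<s-1<1$ (bounded Hölder functions multiply), and that the composition $(\nabla g)\circ f$ of a $\mathcal C^{s-1}$ function with a Lipschitz map is again $\mathcal C^{s-1}$ — this is immediate from the definition of the Hölder seminorm since $|x-y|^{s-1}$ is comparable to $|f(x)-f(y)|^{s-1}$ up to the Lipschitz constants. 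Since $Df\in\mathcal C^{s-1}\cap L^\infty$ by hypothesis and $(\nabla g)(f)\in\mathcal C^{s-1}\cap L^\infty$ by the previous remark together with $g\in L^\infty$, their product is in $\mathcal C^{s-1}$, so $g\circ f\in\mathcal C^s$, and also $g\circ f\in L^\infty$ trivially; this proves the first line of \eqref{eqInvarianceUnderbiLipschitzHOLD}. The second line, $\nabla g\circ f\in\mathcal C^{s-1}$, is exactly the remark just used (it is really a statement at smoothness $s-1<1$ about composing with a Lipschitz map, no chain rule needed).

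For the inductive step assume the lemma holds for all admissible smoothness in $(1,k]$ and let $k<s<k+1$. Write $D^{\vec k}(g\circ f)$ via \eqref{eqGeneralizedChainRule} as a finite sum of terms $D^{\vec i}g(f)\prod_{\ell}D^{\alpha_\ell}f^{m(\vec i)_\ell}$ with $\sum_\ell|\alpha_\ell|=k$ and $|\vec i|\ge 1$. I must show every such term is in $\mathcal C^{s-k}\cap L^\infty$, and then sum. Each factor $D^{\alpha_\ell}f^{m(\vec i)_\ell}$ is a derivative of $f$ of order $|\alpha_\ell|\le k$; if $|\alpha_\ell|<k$ it is in $\mathcal C^{s-|\alpha_\ell|}\cap L^\infty\subset \mathcal C^{s-k}\cap L^\infty$ (higher Hölder smoothness embeds into lower, with an $L^\infty$ bound coming from the bi-Lipschitz/$\mathcal C^s$ hypothesis on $f$ on, say, the relevant scale; on bounded pieces or with the non-homogeneous norm this is automatic, and since the statement only concerns $\dot{\mathcal C}^{s-k}$ plus $L^\infty$ we may work locally), and if some $|\alpha_\ell|=k$ then $\vec i$ has $|\vec i|=1$ and the term is $(D^{\vec i}g)(f)\cdot D^{\alpha_\ell}f$, handled as in the base case since $D^{\vec i}g=\partial_m g\in\mathcal C^{s-1}\subset\mathcal C^{s-k}\dots$ — wait, more precisely $\partial_m g\in\mathcal C^{s-1}$ composed with Lipschitz $f$ gives $\mathcal C^{s-1}\supset\mathcal C^{s-k}$? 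No: here $s-1$ could be large, but we only need $\mathcal C^{s-k}$ membership after multiplying by $D^{\alpha_\ell}f\in\mathcal C^{s-k}$, and $\min(s-1,s-k)=s-k$, so the product lies in $\mathcal C^{s-k}\cap L^\infty$ by the algebra property. The remaining generic term has all $|\alpha_\ell|\le k-1$ and $|\vec i|\ge 2$: the factor $D^{\vec i}g(f)$ equals $(D^{\vec i}g)\circ f$ with $D^{\vec i}g$ a derivative of $g$ of order $|\vec i|\le k$; if $|\vec i|<k$ then $D^{\vec i}g\in\mathcal C^{s-|\vec i|}\cap L^\infty$ and composing with Lipschitz $f$ preserves this, while if $|\vec i|=k$ then all $|\alpha_\ell|=1$ and we must already know $(D^{\vec i}g)\circ f\in\mathcal C^{s-k}$ — which is precisely the second conclusion of \eqref{eqInvarianceUnderbiLipschitzHOLD} applied to the function $D^{\vec i}g\in\mathcal C^{s-1}$ (here I use that $s-1$ with $\lfloor s-1\rfloor=k-1$ is covered by the induction hypothesis, or directly the $k=1$-type argument iterated). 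Multiplying finitely many factors of $\mathcal C^{s-k}\cap L^\infty$ stays in $\mathcal C^{s-k}\cap L^\infty$ by the algebra property, and the finite sum too; combined with $g\circ f\in L^\infty$ this gives $g\circ f\in\mathcal C^s(\Omega_1)$, and the $\nabla g\circ f$ statement follows by applying what was just proved to each $\partial_m g$ in place of $g$ (noting $\partial_m g\in\mathcal C^{s-1}$).

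Finally, for \eqref{eqInvarianceUnderInversionHOLD} I apply the composition result with a well-chosen $g$. The standard trick: $f^{-1}$ is Lipschitz (bi-Lipschitz $f$), and componentwise $f^{-1}_i(y)=$ the $i$-th coordinate, so I want to bootstrap $Df^{-1}$. Differentiating $f^{-1}\circ f=\mathrm{Id}$ gives, a.e., $Df^{-1}(f(x))\cdot Df(x)=\mathrm{Id}$, hence $Df^{-1}(f(x))=(Df(x))^{-1}$. Now $Df\in\mathcal C^{s-1}\cap L^\infty$ with $\det Df$ bounded away from $0$ (bi-Lipschitz), so $(Df)^{-1}=\frac1{\det Df}\,\mathrm{adj}(Df)$ is in $\mathcal C^{s-1}\cap L^\infty$ because the Hölder-Zygmund/$\mathcal C^t$ classes with $0<t<1$ are closed under the smooth map $A\mapsto A^{-1}$ on the set $\{|\det A|\ge c\}$ (this is just the $C^\infty$ case of the nonlinear composition, or can be seen directly: $\mathrm{adj}$ is polynomial hence preserves $\mathcal C^{s-1}\cap L^\infty$ by the algebra property, and $t\mapsto 1/t$ is smooth and bounded on $[c,\infty)$, composing with the bounded function $\det Df\in\mathcal C^{s-1}$). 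Thus $\varphi:=(Df)^{-1}\in\mathcal C^{s-1}(\Omega_1)$ — but when $s-1\ge 1$ one must instead run the same induction: $\varphi$ is built from the entries of $Df\in\mathcal C^{s-1}$ by polynomial operations and by $t\mapsto 1/t$, so by the chain rule / algebra arguments above $\varphi\in\mathcal C^{s-1}(\Omega_1)$. Then $Df^{-1}=\varphi\circ f^{-1}$, and since $f^{-1}$ is bi-Lipschitz this composition is in $\mathcal C^{s-1}(\Omega_2)$ by the already-proved composition statement \eqref{eqInvarianceUnderbiLipschitzHOLD} (applied with roles of $\Omega_1,\Omega_2$ swapped and with $\varphi$ in place of $g$ — noting $\varphi$ need not be bi-Lipschitz, but the composition result only requires the \emph{inner} map to be bi-Lipschitz). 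Hence $f^{-1}\in\mathcal C^s(\Omega_2)$.

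The main obstacle I anticipate is bookkeeping in the inductive step: keeping track of which factors in the Faà di Bruno expansion have smoothness exactly $s-k$ versus strictly larger, so that the circular-looking appeal to "the second conclusion of \eqref{eqInvarianceUnderbiLipschitzHOLD}" is genuinely a lower-order instance covered by the induction hypothesis and not a logical loop. The clean way to organize this is to prove by simultaneous induction on $k$ the conjunction of both lines of \eqref{eqInvarianceUnderbiLipschitzHOLD} together with the algebra property of $\mathcal C^{s-k}\cap L^\infty$ and the "Lipschitz-change-of-variables preserves $\mathcal C^t$ for $t<1$" lemma, treating the latter two as standing facts established once and for all at the start. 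The inversion statement is then a corollary with essentially no new difficulty beyond the $A\mapsto A^{-1}$ smoothness observation.
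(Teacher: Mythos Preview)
Your approach for the composition is essentially the same as the paper's, just organized differently: the paper writes out the first-order difference of $\nabla^k(g\circ f)$ directly via a telescoping estimate (their display (2.6)), which is exactly the computation underlying the ``$\mathcal C^{s-k}\cap L^\infty$ is an algebra'' fact you invoke. The paper does this in one shot without induction, whereas you structure it as an induction on $k$; both are fine and lead to the same bounds.

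There is, however, a genuine gap in your argument for \eqref{eqInvarianceUnderInversionHOLD}. You write that ``since $f^{-1}$ is bi-Lipschitz this composition is in $\mathcal C^{s-1}(\Omega_2)$ by the already-proved composition statement \eqref{eqInvarianceUnderbiLipschitzHOLD}'' and that ``the composition result only requires the \emph{inner} map to be bi-Lipschitz.'' This is false when $s-1>1$: the composition statement requires the inner map to be bi-Lipschitz \emph{and} to belong to $\mathcal C^{s-1}$. So to conclude $\varphi\circ f^{-1}\in\mathcal C^{s-1}(\Omega_2)$ you must first know $f^{-1}\in\mathcal C^{s-1}(\Omega_2)$. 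The paper handles this by an explicit induction on the smoothness: since $s-1\in(k-1,k)$ has strictly smaller integer part, the inverse statement at level $s-1$ is available as an induction hypothesis (with bi-Lipschitz as the base case). You did set up a simultaneous induction on $k$ for all three claims, so the fix is simply to invoke that hypothesis here rather than asserting that bi-Lipschitz alone suffices. Once you insert that line, your argument matches the paper's.

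A minor remark: the paper verifies $g_{ij}=(Df)^{-1}_{ij}\in\mathcal C^{s-1}$ by deriving an explicit formula for $D^\alpha g_{ij}$ (their (2.10)) and estimating the Hölder difference directly; your route via ``$\mathrm{adj}$ is polynomial, $t\mapsto 1/t$ is smooth on $\{|t|\ge c\}$, and $\mathcal C^{s-1}\cap L^\infty$ is closed under both'' is a legitimate shortcut that avoids that computation.
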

\begin{proof}
Let us check \rf{eqInvarianceUnderbiLipschitzHOLD}. According to \rf{eqGeneralizedChainRule}, for $s={k}+\sigma$ with ${k} \in \N_0$, $0<\sigma < 1$, we get
\begin{align}\label{eqGeneralizedChainRuleDifferences}
\nonumber |\nabla^k &(g\circ f)(x)-\nabla^k(g\circ f)(y)|\\
	& \lesssim  \sum_{1\leq i\leq k} |\nabla^{i} g(f(x))-\nabla^i g(f(y))| \sum_{\alpha\in\N^i: |\alpha|=k} \prod_{j=1}^{i} | \nabla^{\alpha_j} f (x)| \\
\nonumber	&  + \sum_{1\leq i\leq k} |\nabla^i g(f(y))| \sum_{\alpha\in\N^i: |\alpha|=k} \sum_{\ell=1}^{i} | \nabla^{\alpha_\ell} f (x)-\nabla^{\alpha_\ell} f (y)|\prod_{j=1}^{\ell-1} | \nabla^{\alpha_j} f (y)| \prod_{j=\ell+1}^{i} | \nabla^{\alpha_j} f (x)|,
\end{align}
where we assume always $\alpha_j\geq1$. This implies that 
\begin{align*}
\norm{ g\circ f}_{\dot{\mathcal{C}}^s}
	& \lesssim  \sum_{1\leq i\leq k} C_{f} \norm{\nabla^{i} g}_{\dot{\mathcal{C}}^{\sigma}} \sum_{\alpha\in\N^i: |\alpha|=k} \prod_{j=1}^{i} \norm{\nabla^{\alpha_j} f}_{L^\infty} \\
	& \quad + \sum_{1\leq i\leq k} \norm{\nabla^i g}_{L^\infty} \sum_{\alpha\in\N^i: |\alpha|=k} \sum_{\ell=1}^{i}\norm{ \nabla^{\alpha_\ell} f}_{\dot{\mathcal{C}}^{\sigma}} \prod_{j\neq \ell}  \norm{\nabla^{\alpha_j} f}_{L^\infty},
\end{align*}
so
\begin{equation}\label{eqQuantifyCompositionHolder}
\norm{g\circ f}_{\dot{\mathcal{C}}^s}
	 \leq C_{f} \norm{g}_{\mathcal{C}^s(\Omega_2)},
\end{equation}
with $C_{f}$ depending polynomially on the $\mathcal{C}^{\sigma}$ norm of the derivatives of $f$ and its bi-Lipschitz constant. The second inequality follows from the first one. In fact,  $h \circ f \in \mathcal{C}^{s-1}(\Omega_1)$ whenever $h\in \mathcal{C}^{s-1}(\Omega_2)$.

Finally, let us prove \rf{eqInvarianceUnderInversionHOLD}. Applying the inverse function theorem, 
\begin{equation*}
D(f^{-1})(x) = (Df)^{-1}(f^{-1}(x)).
\end{equation*}
That is, the first-order derivatives of the inverse can be expressed {using Cramer's rule} as 
\begin{equation}\label{eqInverseDerivatives}
(D(f^{-1}))_{ij}=g_{ij}\circ{(f^{-1})}\mbox{, \quad\quad where \quad\quad}g_{ij}=\frac{P_{ij}(Df)}{\det(Df)}
\end{equation} 
for certain homogeneous polynomials $P_{ij}:\R^{d\times d}\to\R$ of degree $d-1$. By induction we can 
assume $f^{-1}\in \mathcal{C}^{s-1}$ (note that the starting point of the induction is obtained from the bi-Lipschitz assumption), and by \rf{eqInvarianceUnderbiLipschitzHOLD} it is enough to check that $g_{ij}\in C^{s-1}(\Omega_1)$.
But every derivative of degree $k-1$ of $g_{ij}$ is a polynomial of degree $kd-1$ on the derivatives of $f$ with $k-1$ new derivations taken at each {summand}, possibly taking more than one of these new derivations to some of the derivatives of $f$, divided by the $k$-th power of the Jacobian determinant, i.e., for every $\alpha\in\N_0^d$  with $|\alpha|=k-1$ we have
\begin{equation}\label{eqDerivativesInverse}
D^\alpha g_{ij}= \sum_{\substack{\beta\in\N_0^{d\times d}: |\beta|=(d-1)k \\ \gamma\in(\N_0^d)^{k-1}: |\gamma_\ell|\geq 1\, \&\,\sum|\gamma_\ell|=2k-2\\\mu\in\{1,\dots,d\}^{k-1}}} \frac{C_{i,j,\alpha,\beta,\gamma,\mu} (D f)^{\beta} \prod_{\ell=1}^{k-1} D^{\gamma_\ell} f_{\mu_\ell}}{\det(Df)^k}.
\end{equation}
 Applying the argument in \rf{eqGeneralizedChainRuleDifferences} to each of these derivatives we obtain \rf{eqInvarianceUnderInversionHOLD}.
\end{proof}

\subsection{Justification of the chain rule: the Sobolev scale}
Next we adapt the approach above to show a counterpart to Theorem \ref{theoTriebelAdmissibleBanachBis} for Sobolev spaces.  To adapt the argument above to the Sobolev setting we need to add a restriction that allows us to take appropriate H\"older inequalities. This is based on the following interpolation inequalities:
\begin{proposition}[{see \cite[Theorem 2.2.5]{RunstSickel}}]\label{propoRunstSickel}
Let $0<t<\infty$, $0<p<\infty$, $0<r,\ell \leq\infty$ and $0<\Theta<1$. Then every {locally integrable function} $g$ satisfies that
\begin{equation}\label{eqInterpolationInfty}
\norm{g}_{F^{\Theta t}_{\frac p\Theta, r}}\leq C_{t,p,r,\ell,\Theta} \norm{g}_{F^t_{p,\ell}}^\Theta \norm{g}_{L^\infty}^{1-\Theta}.
\end{equation}
\end{proposition}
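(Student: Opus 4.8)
Since the asserted inequality is trivial unless $g\in L^\infty(\R^d)\cap F^t_{p,\ell}(\R^d)$, I would assume this from the outset. The plan is to argue entirely with the Fourier-analytic (Littlewood--Paley) characterization of the Triebel--Lizorkin scale on $\R^d$ (see \cite{TriebelTheoryIII}): fix a smooth dyadic resolution of unity $\{\varphi_j\}_{j\ge 0}$, set $g_j:=\varphi_j*g$, and recall that for every admissible triple $(\sigma,P,Q)$ one has $\norm{g}_{F^\sigma_{P,Q}}\approx \big\|\, \|2^{j\sigma}g_j\|_{\ell^Q_j}\,\big\|_{L^P}$, with the usual modification when $Q=\infty$. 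Two elementary pointwise facts drive the argument. First, since $\sup_j\|\varphi_j\|_{L^1}<\infty$, each block satisfies $\|g_j\|_{L^\infty}\lesssim\|g\|_{L^\infty}$ uniformly in $j$, hence $A(x):=\sup_{j\ge 0}|g_j(x)|\lesssim\|g\|_{L^\infty}$ for a.e.\ $x$. Second, setting $C(x):=\big\|(2^{jt}|g_j(x)|)_j\big\|_{\ell^\ell}$, one has $\|C\|_{L^p}\approx\norm{g}_{F^t_{p,\ell}}$, and trivially $2^{jt}|g_j(x)|\le C(x)$ for every $j$ as well as $C(x)\ge A(x)$ a.e.

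The core step would be the pointwise bound
\[
\Big(\sum_{j\ge 0}2^{j\Theta t r}|g_j(x)|^r\Big)^{1/r}\le C_{t,r,\Theta}\,C(x)^{\Theta}A(x)^{1-\Theta}\qquad\text{for a.e. }x
\]
(with $\sup_j$ in place of the sum when $r=\infty$). To prove it I would combine the two trivial bounds $|g_j(x)|\le A(x)$ and $|g_j(x)|\le 2^{-jt}C(x)$ into $2^{j\Theta t}|g_j(x)|\le\min\{2^{j\Theta t}A(x),\,2^{-j(1-\Theta)t}C(x)\}$ and split the sum over $j$ at the crossover index $j^\ast(x):=\lfloor\tfrac1t\log_2(C(x)/A(x))\rfloor\ge 0$: for $j\le j^\ast(x)$ the first bound yields a geometric sum with ratio $2^{\Theta t r}>1$, dominated by its top term $\approx 2^{j^\ast\Theta t r}A(x)^r$, while for $j>j^\ast(x)$ the second bound yields a geometric sum with ratio $2^{-(1-\Theta)t r}<1$, dominated by its bottom term $\approx 2^{-j^\ast(1-\Theta)t r}C(x)^r$; by the choice of $j^\ast(x)$ both are comparable to $C(x)^{\Theta r}A(x)^{(1-\Theta)r}$. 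This is exactly where the strict smoothness gap $0<\Theta t<t$ is used — it is what produces two geometrically summable tails — and it is what makes the estimate valid for every $r\in(0,\infty]$ and every $\ell$ simultaneously, without any further relation between the fine indices.

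Finally I would take $L^{p/\Theta}$-norms of the pointwise bound and apply H\"older's inequality with exponents $1/\Theta$ and $\infty$:
\[
\norm{g}_{F^{\Theta t}_{p/\Theta,r}}\lesssim\big\|C^\Theta A^{1-\Theta}\big\|_{L^{p/\Theta}}\le\|C\|_{L^p}^{\Theta}\,\|A\|_{L^\infty}^{1-\Theta}\lesssim\norm{g}_{F^t_{p,\ell}}^{\Theta}\,\|g\|_{L^\infty}^{1-\Theta},
\]
which is the claim.

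I expect the main obstacle to be the bookkeeping in the pointwise estimate rather than any deep point: controlling the two geometric sums with a constant depending only on $t,p,r,\ell,\Theta$, handling $r<1$ (where $\ell^r$ is only a quasi-norm) and $r=\infty$, dealing separately with the low-frequency block $j=0$ and with the exceptional null set $\{A=0\}$, and checking that the low-order terms do not interfere. Everything outside of that reduces to H\"older's inequality and the standard equivalence of norms.
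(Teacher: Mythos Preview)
Your argument is correct and is essentially the standard proof of this Gagliardo--Nirenberg-type interpolation inequality; note, however, that the paper itself does not prove this proposition at all --- it is quoted as \cite[Theorem 2.2.5]{RunstSickel} and used as a black box. Your pointwise splitting at the crossover index $j^\ast(x)$, combined with the uniform bound $\sup_j|g_j(x)|\lesssim\|g\|_{L^\infty}$ and the trivial $\ell^\ell\hookrightarrow\ell^\infty$ estimate, is exactly the mechanism behind the cited result, so there is nothing to add.
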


We also need the following property of the Rychkov extension operator:
\begin{theorem}[{see \cite[Appendix]{AstalaPratsSaksman}}]\label{theoRychkov}
Given a bounded Lipschitz domain $\Omega$ and $s\in \N$, there exists an operator $\mathcal{E}:=\mathcal{E}_s$ defined in $\mathcal{D}'(\Omega)$ that is an extension operator from $L^\infty(\Omega)$ to $L^\infty$ and from $F^\sigma_{p,q}(\Omega)$ to $F^\sigma_{p,q}$ for every $\sigma\leq s$, every $1/s<p<\infty$ and every $1/s\leq q\leq \infty$.
\end{theorem}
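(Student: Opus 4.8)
The plan is to take as $\mathcal E_s$ a localized form of Rychkov's universal extension operator. \emph{Step 1 (localization).} Using Definition~\ref{defLipschitzDomain}, cover $\partial\Omega$ by finitely many cubes $\mathcal Q_1,\dots,\mathcal Q_N$ in each of which, after a rigid motion, $\Omega\cap\mathcal Q_i=\{(x',x_d):x_d>A_i(x')\}$ is an epigraph with $\norm{A_i'}_{L^\infty}\leq\delta$. Fix a smooth partition of unity $\eta_0,\dots,\eta_N$ with $\eta_0\in\mathcal D(\Omega)$, $\eta_i\in\mathcal D(\mathcal Q_i)$ and $\sum_i\eta_i\equiv1$ on a neighbourhood of $\overline\Omega$, and set $\mathcal E_s g:=\eta_0 g+\sum_{i\geq1}\eta_i\,\mathcal E^{(i)}(\eta_i g)$, where $\mathcal E^{(i)}$ is a one-sided extension operator for the epigraph attached to $\mathcal Q_i$. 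Since multiplication by a fixed function in $\mathcal D(\R^d)$ is a bounded pointwise multiplier on $L^\infty$ and on $F^\sigma_{p,q}$ for every admissible triple, it suffices to construct and estimate one $\mathcal E^{(i)}$; I drop the index and write $\Omega=\{x_d>\psi(x')\}$ with $\norm{\psi'}_{L^\infty}\leq\delta$.

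\emph{Step 2 (Rychkov's reproducing identity).} Given the integer $s$, fix $L\geq s$ large enough and let $\varphi_0,\varphi,\lambda_0,\lambda\in\mathcal D(\R^d)$ be Rychkov's building blocks: their supports lie in a fixed one-sided region adapted to $\delta$, one has $\int\varphi_0\neq0\neq\int\lambda_0$ and $\int y^\alpha\varphi(y)\,dy=\int y^\alpha\lambda(y)\,dy=0$ for $0\leq|\alpha|\leq L$, and $\sum_{k\geq0}\lambda_k*\varphi_k*g=g$ for every $g\in\mathcal S'(\R^d)$, with $\varphi_k=2^{kd}\varphi(2^k\cdot)$, $\lambda_k=2^{kd}\lambda(2^k\cdot)$ for $k\geq1$. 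The supports are chosen so that, at every point occurring below, $g_k(x):=\langle g,\varphi_k(x-\cdot)\rangle$ is a well-defined smooth quantity depending only on $g|_\Omega$; one then sets $\mathcal E^{(i)}g:=\sum_{k\geq0}\lambda_k*g_k$, convergent in $\mathcal D'(\R^d)$ for $g\in\mathcal D'(\Omega)$.

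\emph{Step 3 (the three properties).} (a) \emph{Extension:} restricting $\sum_k\lambda_k*\varphi_k*g=g$ to $\Omega$ gives $\mathcal E^{(i)}g|_\Omega=g$, because there the $g_k$ are the genuine convolutions and every convolution in the identity, evaluated on $\Omega$, only sees $g|_\Omega$. (b) \emph{$F^\sigma_{p,q}$ bound:} by the local-means characterization of $F^\sigma_{p,q}(\R^d)$ in \cite{TriebelTheoryIII}, expand $\theta_j*\mathcal E^{(i)}g=\sum_k\theta_j*\lambda_k*g_k$; the $L$ vanishing moments of $\lambda$ (and as many as needed on the analyzing kernel $\theta$) yield $\norm{\theta_j*\lambda_k}_{L^1}\lesssim 2^{-|j-k|(L+1)}$, and combining with Peetre's maximal function and the Fefferman--Stein vector-valued maximal inequality gives $\norm{\mathcal E^{(i)}g}_{F^\sigma_{p,q}(\R^d)}\lesssim\bigl\|\{2^{k\sigma}g_k\}_k\bigr\|_{L^p(\Omega;\ell^q)}$; finally, for any extension $G$ of $g$ one has $\varphi_k*G=g_k$ on $\Omega$, so this last quantity is $\lesssim\norm{g}_{F^\sigma_{p,q}(\Omega)}$ after taking the infimum over $G$. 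The hypotheses $\sigma\leq s$, $p>1/s$ and $q\geq1/s$ are exactly what make the maximal inequality and the summation of the almost-orthogonal pieces work. (c) \emph{$L^\infty$ bound:} an analogous, more elementary pointwise estimate; $\norm{g_k}_{L^\infty}\lesssim\norm{g}_{L^\infty(\Omega)}$ uniformly in $k$, only $O\bigl(\log\tfrac1{\dist(x,\Omega)}\bigr)$ scales carry genuinely new information at a point $x\notin\Omega$, and the vanishing moments of $\lambda$ telescope the rest (equivalently $\mathcal E^{(i)}g(x)$ is a partial sum of the Calder\'on formula applied to the norm-preserving zero extension of $g$), so $\norm{\mathcal E^{(i)}g}_{L^\infty}\lesssim\norm{g}_{L^\infty(\Omega)}$.

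The main obstacle is Rychkov's lemma itself: producing a reproducing pair whose building blocks are compactly supported inside the one-sided region adapted to $\delta$, carry $L$ vanishing moments, and still satisfy $\sum_k\lambda_k*\varphi_k*g=g$ exactly --- the support and moment requirements compete, and are reconciled by a Paley--Wiener/division argument on the Fourier side. The rest of Step~3 is routine. Since the statement is quoted from \cite[Appendix]{AstalaPratsSaksman}, where this is carried out on top of Rychkov's construction, in the paper I would simply point there for the complete argument and only record Step~1 and the reductions (a)--(c).
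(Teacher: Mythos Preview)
The paper does not prove this theorem at all: it is stated with the attribution ``see \cite[Appendix]{AstalaPratsSaksman}'' and used as a black box, so there is no proof in the paper to compare against. Your sketch of the localized Rychkov operator is a faithful outline of what that reference does, and you yourself correctly note in the last paragraph that in the present paper one would simply point to \cite[Appendix]{AstalaPratsSaksman}; that is exactly what happens.

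One minor caution on Step~3(c): the sentence ``only $O\bigl(\log\tfrac1{\dist(x,\Omega)}\bigr)$ scales carry genuinely new information'' does not by itself yield a uniform $L^\infty$ bound, since that quantity blows up at the boundary; the actual mechanism is the one you allude to afterwards, namely that the vanishing moments of $\lambda$ make the tail telescope (equivalently, the partial sums of the Calder\'on reproducing formula applied to the zero extension are uniformly bounded in $L^\infty$). If you ever need to write this argument out in full, make that the primary point rather than the scale-counting heuristic.
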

\begin{remark}\label{remDomainType}
It would be highly appreciated to have the same result for uniform domains. In the Sobolev scale this is proven in \cite{RogersExtension}. It seems natural to think that the same operator may work in the Triebel-Lizorkin scale and may include also $L^\infty$. If that was true, all the results in this paper could be extended to uniform domains.
\end{remark}

\begin{lemma}\label{lemReadyForHolderInequality}
Let {${k} \in \N$}, $0<\sigma \leq 1$ and  $s:={k}+\sigma$, let $ 1\leq p < \infty$, $ 1\leq q \leq\infty$ and $d\in\N$ and let $f\in F^{s}_{p,q}(\Omega)\cap C^{0,1}(\Omega)$ where $\Omega\subset\R^d$ is a bounded Lipschitz domain. Then, for every positive index $j\leq k$
$$\norm{\nabla^j f}_{ L^{p\frac{s-1}{j-1}}(\Omega)}\lesssim_{s,p,q,j,\Omega}  \norm{f}_{ F^{s}_{p,q}(\Omega)}^{\frac{j-1}{s-1}}\norm{\nabla f}_{ L^\infty(\Omega)}^{\frac{s-j}{s-1}}$$
{(here, and throughout the paper, we make the convention $\frac{t}0=\infty$ for any $t>0$).}
Moreover, for every $1\leq r\leq \infty$ and $M\geq1$ with $j+\sigma/M<s$, we have
$$\norm{\nabla^j f}_{ F^{\sigma/M}_{\frac{p(s-1)}{j+\sigma/M-1},r}(\Omega)}\lesssim_{s,p,q,r,j+\sigma/M,\Omega} \norm{f}_{F^{s}_{p,q}(\Omega)}^{\frac{j+\sigma/M-1}{s-1}} \norm{\nabla f}_{ L^\infty(\Omega)}^{\frac{s-j-\sigma/M}{s-1}} .$$
\end{lemma}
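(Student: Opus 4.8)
The plan is to deduce both estimates from Proposition~\ref{propoRunstSickel} together with the mapping properties of the Rychkov extension operator $\mathcal{E}$ from Theorem~\ref{theoRychkov}, reducing everything to known Gagliardo--Nirenberg-type inequalities on $\R^d$. First I would fix an extension $F=\mathcal{E}f$ of $f$ with $\norm{F}_{F^s_{p,q}(\R^d)}\lesssim \norm{f}_{F^s_{p,q}(\Omega)}$ and $\norm{F}_{C^{0,1}(\R^d)}\lesssim \norm{f}_{C^{0,1}(\Omega)}$ (the $L^\infty$ bound on $\nabla F$ being exactly the part of Theorem~\ref{theoRychkov} about $L^\infty$); all subsequent inequalities on $\R^d$ restrict to $\Omega$ without loss. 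Since $\nabla f\in L^\infty$ means $f\in \dot{\mathcal C}^1$, the natural interpolation is between $F^s_{p,q}$ and $\dot{\mathcal C}^1\subset F^1_{\infty,\infty}$, so I would invoke \eqref{eqInterpolationInfty} after a preliminary reduction to a statement about a function whose top regularity sits at level one above zero.

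The cleanest route is to apply the interpolation inequality to $g=\partial f$, i.e.\ to a generic first-order derivative, which lies in $F^{s-1}_{p,q}$ with $\norm{\partial f}_{F^{s-1}_{p,q}}\lesssim \norm{f}_{F^s_{p,q}}$ and in $L^\infty$ with $\norm{\partial f}_{L^\infty}\le \norm{\nabla f}_{L^\infty}$. Then \eqref{eqInterpolationInfty} with $t=s-1$, exponent $p$, and $\Theta$ chosen so that $\Theta t$ is the target smoothness gives
$$
\norm{\partial f}_{F^{\Theta(s-1)}_{p/\Theta,r}} \lesssim \norm{\partial f}_{F^{s-1}_{p,q}}^\Theta \norm{\partial f}_{L^\infty}^{1-\Theta} \lesssim \norm{f}_{F^s_{p,q}(\Omega)}^\Theta \norm{\nabla f}_{L^\infty(\Omega)}^{1-\Theta}.
$$
For the first displayed estimate I take smoothness target $0$ at level $j$, i.e.\ I want an $L^{P}$ bound on $\nabla^j f$; one sets $\Theta = \frac{j-1}{s-1}$ so that $p/\Theta = \frac{p(s-1)}{j-1}$, and since each component of $\nabla^j f$ is a derivative of order $j-1$ of some $\partial f$, the Sobolev-type embedding $F^{\Theta(s-1)}_{p/\Theta,r}\hookrightarrow F^0_{P,2}\supset L^P$ (valid because $\Theta(s-1)=j-1$ is an integer and these spaces contain $W^{j-1,P}$, after absorbing the $j-1$ derivatives) yields the claim; more directly, $\nabla^{j-1}$ maps $F^{j-1}_{p/\Theta,r}$ to $F^0_{p/\Theta,r}\subset L^{p/\Theta}$ when $p/\Theta\ge$ the relevant threshold, which holds here. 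For the second displayed estimate I instead take $\Theta(s-1)=j+\sigma/M-1$, i.e.\ $\Theta=\frac{j+\sigma/M-1}{s-1}\in(0,1)$ by the hypothesis $j+\sigma/M<s$ and $j\ge1$, so $p/\Theta=\frac{p(s-1)}{j+\sigma/M-1}$, and then apply $\nabla^{j-1}:F^{j-1+\sigma/M}_{p/\Theta,r}\to F^{\sigma/M}_{p/\Theta,r}$, which is bounded with norm comparable to $1$; this gives exactly the stated inequality with exponent $r$ in the fine index, which is why the statement is free to pick any $r$.

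The main obstacle I anticipate is bookkeeping the index thresholds: Proposition~\ref{propoRunstSickel} is stated for $0<p<\infty$ with no lower restriction, but the passage from $F^{j-1+\sigma/M}_{P,r}$ down to $F^{\sigma/M}_{P,r}$ by stripping $j-1$ derivatives, and from $F^{j-1}_{P,r}$ to $L^P$, needs the target space to behave well, which forces $P=\frac{p(s-1)}{j+\sigma/M-1}\ge 1$ — this is where one must check that $p\ge1$ and $j+\sigma/M-1\le s-1$ suffice, using $j\ge1$. A secondary subtlety is that the $L^\infty$ mapping property of $\mathcal{E}$ in Theorem~\ref{theoRychkov} is about $L^\infty$ of the function, whereas here we need control of $\nabla f$ in $L^\infty$; this is handled because $C^{0,1}(\Omega)$-functions extend (via the same Rychkov operator applied at smoothness $s\ge 1$, or by a standard Lipschitz extension) to $C^{0,1}(\R^d)$ with $\norm{\nabla \mathcal{E}f}_\infty\lesssim\norm{\nabla f}_\infty$, so $\partial(\mathcal{E}f)\in L^\infty$ with the right bound. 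Once these are in place the two inequalities follow by a direct substitution of exponents, with constants depending on $s,p,q,r$, the chosen derivative order, and $\Omega$ through the extension operator, exactly as claimed.
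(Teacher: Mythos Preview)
Your approach is correct and matches the paper's: apply \eqref{eqInterpolationInfty} to a first-order derivative with $t=s-1$ and $\Theta=\frac{j-1}{s-1}$ (resp.\ $\Theta=\frac{j+\sigma/M-1}{s-1}$), after passing to $\R^d$ via Rychkov's extension operator. The paper sidesteps your ``secondary subtlety'' by applying $\mathcal{E}_s$ directly to $\nabla f$ rather than to $f$---so the $L^\infty\to L^\infty$ boundedness in Theorem~\ref{theoRychkov} yields $\mathcal{E}_s(\nabla f)\in L^\infty$ immediately, with no need to argue that $\nabla(\mathcal{E}f)$ is bounded.
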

Note that $j+\sigma/M<s$ excludes only the case when both $M=1$ and $j=k$.

\begin{proof}
For the first embedding, {note that the case $j=1$ holds trivially:
$$\norm{\nabla f}_{ L^{\infty}(\Omega)}= \norm{f}_{ F^{s}_{p,q}(\Omega)}^0 \norm{\nabla f}_{ L^\infty(\Omega)}^1.$$
 Otherwise,} use Proposition \ref{propoRunstSickel} choosing $g=\mathcal{E}_s(\nabla f)$ where $\mathcal{E}_s$ is the Rychkov extension operator from Theorem \ref{theoRychkov},  $t=s-1$, $r=2$, $\ell=q$ and  $\Theta=\frac{j-1}{s-1}$. Then 
$$\norm{\mathcal{E}_s(\nabla f)}_{F^{j-1}_{\frac {p(s-1)}{j-1}, r}}\leq C_{s,p,q,j} \norm{\mathcal{E}_s(\nabla f)}_{F^{s-1}_{p,q}}^{\frac{j-1}{s-1}} \norm{\mathcal{E}_s(\nabla f)}_{L^\infty}^{\frac{s-j}{s-1}},$$
and the first statement follows.

For the second inequality, we do the same trick but we take instead $r$ given and set $\Theta = \frac{j+\sigma/M-1}{s-1}$. In this way we obtain 
$$\norm{\mathcal{E}_s(\nabla f)}_{F^{j+\sigma/M-1}_{\frac {p(s-1)}{j+\sigma/M-1}, r}}\leq C_{s,p,q,r,j+\sigma/M} \norm{\mathcal{E}_s(\nabla f)}_{F^{s-1}_{p,q}}^{\frac{j+\sigma/M-1}{s-1}} \norm{\mathcal{E}_s(\nabla f)}_{L^\infty}^{\frac{s-j-\sigma/M}{s-1}},$$
and the second statement follows as well.
\end{proof}

\begin{figure}[t]
\caption{On the first graphic, $f\in W^{5,p}\cap W^{1,\infty}$, so  $\nabla^2 f\in L^{4p}$, $\nabla^3 f\in L^{2p}$ and  $\nabla^4 f\in L^{\frac{4p}{3}}$. On the second we depict the case $f\in F^s_{p,q}\cap W^{1,\infty}$, with $4<s=4+\sigma<5$; in that case $\nabla f\in F^{\sigma/M}_{\frac{(s-1)p}{\sigma/M},r}$, $\nabla^2 f\in F^{\sigma/M}_{\frac{(s-1)p}{1+\sigma/M},r}$, $\nabla^3 f\in F^{\sigma/M}_{\frac{(s-1)p}{2+\sigma/M},r}$, and $\nabla^4 f\in F^{\sigma/M}_{\frac{(s-1)p}{3+\sigma/M},q}$ ($q$ can be replaced by $r$ if $M>1$). The circular dots describe the case $M=1$, the squares describe the case $M=2$. See Lemma \ref{lemReadyForHolderInequality}.}\label{figInterpolation}

\center
\begin{tikzpicture}[line cap=round,line join=round,>=triangle 45,x=3.8cm,y=0.7cm]
\clip(-0.41,-0.9277074696345513) rectangle (1.45,5.4);
\draw [line width=.5pt,color=grisfosc] (0.,0.) -- (0.,5.203518980893238);
\draw [line width=.5pt,color=grisfosc] (1.,0.) -- (1.,5.203518980893238);
\draw [line width=.5pt,color=grisfosc] (0.,5.)-- (1.,5.);
\draw [line width=.5pt,color=grisfosc] (0.,3.)-- (1.,3.);
\draw [line width=.5pt,color=grisfosc] (0.,4.)-- (1.,4.);
\draw [line width=.5pt,color=grisfosc] (0.,2.)-- (1.,2.);
\draw [line width=.5pt,color=grisfosc] (0.,1.)-- (1.,1.);
\draw [line width=.5pt,color=grisfosc] (0.,0.)-- (1.,0.);
\begin{scriptsize}
\draw (-0.13,5.3) node[anchor=north west] {$s$};
\draw (0.94,0.057219536897733694) node[anchor=north west] {$\frac11$};
\draw (-0.1,0.05) node[anchor=north west] {$\frac1\infty$};
\end{scriptsize}
\draw [line width=.5pt,color=grisfosc] (0.6880985479597884,5.)-- (0.,1.);
\begin{scriptsize}
\draw (0.64,5.1) node[anchor=north west] {$f\in W^{5,p}$};
\draw (0.47,4.1) node[anchor=north west] {$W^{4,\frac{4p}3}$};
\draw (0.3,3.1) node[anchor=north west] {$W^{3,2p}$};
\draw (0.12,2.1) node[anchor=north west] {$W^{2,4p}$};
\draw (-0.35,1.6) node[anchor=north west] {$f\in W^{1,\infty}$};
\end{scriptsize}
\draw [line width=0.5pt,dotted,color=cqcqcq] (0.1720246369899471,0.) -- (0.1720246369899471,5.203518980893238);
\draw [line width=0.5pt,dotted,color=cqcqcq] (0.34404927397989427,0.) -- (0.34404927397989427,5.203518980893238);
\draw [line width=0.5pt,dotted,color=cqcqcq] (0.5160739109698413,0.) -- (0.5160739109698413,5.203518980893238);
\draw [line width=0.5pt,dotted,color=cqcqcq] (0.6880985479597884,0.) -- (0.6880985479597884,5.203518980893238);
\begin{scriptsize}
\draw (0.1,0) node[anchor=north west] {$\frac1{4p}$};
\draw (0.28,0) node[anchor=north west] {$\frac1{2p}$};
\draw (0.45,0) node[anchor=north west] {$\frac3{4p}$};
\draw (0.64,0) node[anchor=north west] {$\frac1{p}$};
\draw [fill=uuuuuu] (0.,1.) circle (1.0pt);
\draw [fill=uuuuuu] (0.6880985479597884,5.) circle (1.0pt);
\draw [fill=uuuuuu] (0.34404927397989427,3.) circle (1.0pt);
\draw [fill=uuuuuu] (0.1720246369899471,2.) circle (1.0pt);
\draw [color=uuuuuu] (0.,0.)-- ++(-1.5pt,-1.5pt) -- ++(3.0pt,3.0pt) ++(-3.0pt,0) -- ++(3.0pt,-3.0pt);
\draw [color=uuuuuu] (0.1720246369899471,0.)-- ++(-1.5pt,-1.5pt) -- ++(3.0pt,3.0pt) ++(-3.0pt,0) -- ++(3.0pt,-3.0pt);
\draw [color=uuuuuu] (0.34404927397989427,0.)-- ++(-1.5pt,-1.5pt) -- ++(3.0pt,3.0pt) ++(-3.0pt,0) -- ++(3.0pt,-3.0pt);
\draw [color=uuuuuu] (0.6880985479597884,0.)-- ++(-1.5pt,-1.5pt) -- ++(3.0pt,3.0pt) ++(-3.0pt,0) -- ++(3.0pt,-3.0pt);
\draw [fill=uuuuuu] (0.5160739109698413,4.) circle (1.0pt);
\draw [color=uuuuuu] (0.5160739109698413,0.)-- ++(-1.5pt,-1.5pt) -- ++(3.0pt,3.0pt) ++(-3.0pt,0) -- ++(3.0pt,-3.0pt);
\draw [color=uuuuuu] (1.,0.)-- ++(-1.5pt,-1.5pt) -- ++(3.0pt,3.0pt) ++(-3.0pt,0) -- ++(3.0pt,-3.0pt);
\end{scriptsize}
\end{tikzpicture}
\, 
\begin{tikzpicture}[line cap=round,line join=round,>=triangle 45,x=3.8cm,y=0.7cm]
\clip(-0.41,-0.9277074696345513) rectangle (1.45,5.4);
\draw [line width=.5pt,color=grisfosc] (0.,0.) -- (0.,5.203518980893238);
\draw [line width=.5pt,color=grisfosc] (1.,0.) -- (1.,5.203518980893238);
\draw [line width=.5pt,color=grisfosc] (0.,5.)-- (1.,5.);
\draw [line width=.5pt,color=grisfosc] (0.,3.)-- (1.,3.);
\draw [line width=.5pt,color=grisfosc] (0.,4.)-- (1.,4.);
\draw [line width=.5pt,color=grisfosc] (0.,2.)-- (1.,2.);
\draw [line width=.5pt,color=grisfosc] (0.,1.)-- (1.,1.);
\draw [line width=.5pt,color=grisfosc] (0.,0.)-- (1.,0.);
\begin{scriptsize}
\draw (-0.13,5.3) node[anchor=north west] {$s$};
\draw (0.94,0.057219536897733694) node[anchor=north west] {$\frac11$};
\draw (-0.1,0.05) node[anchor=north west] {$\frac1\infty$};
\end{scriptsize}
\draw [line width=.5pt,color=grisfosc] (0.7341623110203971,4.392796751026526)-- (0.,1.);
\draw [line width=.5pt,color=grisfosc] (0.08499671264059261,0.392796751026526)-- (0.,0.);
\draw [line width=.5pt,color=grisfosc] (0.3013852454338608,0.392796751026526)-- (0.21638853279326817,0.);
\draw [line width=.5pt,color=grisfosc] (0.517773778227129,0.392796751026526)-- (0.4327770655865364,0.);
\draw [line width=.5pt,color=grisfosc] (0.7341623110203971,0.392796751026526)-- (0.6491655983798045,0.);
\draw [line width=0.5pt,dotted,color=cqcqcq] (0.21638853279326817,0.) -- (0.21638853279326817,5.203518980893238);
\draw [line width=0.5pt,dotted,color=cqcqcq] (0.4327770655865364,0.) -- (0.4327770655865364,5.203518980893238);
\draw [line width=0.5pt,dotted,color=cqcqcq] (0.6491655983798045,0.) -- (0.6491655983798045,5.203518980893238);
\draw [line width=0.5pt,dotted,color=cqcqcq] (0.7341623110203971,0.) -- (0.7341623110203971,5.203518980893238);
\draw [line width=0.5pt,dotted,color=cqcqcq] (0.517773778227129,0.) -- (0.517773778227129,5.203518980893238);
\draw [line width=0.5pt,dotted,color=cqcqcq] (0.30138524543386075,0.) -- (0.30138524543386075,5.203518980893238);
\draw [line width=0.5pt,dotted,color=cqcqcq] (0.08499671264059261,0.) -- (0.08499671264059261,5.203518980893238);
\draw [line width=0.5pt,dotted,color=cqcqcq] (0.042498356320296304,0.) -- (0.042498356320296304,5.203518980893238);
\draw [line width=0.5pt,dotted,color=cqcqcq] (0.25888688911356444,0.) -- (0.25888688911356444,5.203518980893238);
\draw [line width=0.5pt,dotted,color=cqcqcq] (0.4752754219068327,0.) -- (0.4752754219068327,5.203518980893238);
\draw [line width=0.5pt,dotted,color=cqcqcq] (0.6916639547001008,0.) -- (0.6916639547001008,5.203518980893238);
\begin{scriptsize}
\draw (0.7,5) node[anchor=north west] {$f\in F^s_{p,q}$};
\draw (-0.35,1.6) node[anchor=north west] {$f\in W^{1,\infty}$};
\draw (-0.04,-0.08) node[anchor=north west] {$\frac{\sigma}{(s-1)p}$};
\draw (0.18,0) node[anchor=north west] {$\frac{1+\sigma}{(s-1)p}$};
\draw (0.4,0) node[anchor=north west] {$\frac{2+\sigma}{(s-1)p}$};
\draw (0.68,0) node[anchor=north west] {$\frac1{p}$};
\draw (0.7,0.95) node[anchor=north west] {$\nabla^4 f$};
\draw (0.48,0.95) node[anchor=north west] {$\nabla^3 f$};
\draw (0.26,0.95) node[anchor=north west] {$\nabla^2 f$};
\draw (0.05,0.95) node[anchor=north west] {$\nabla f$};
\draw [fill=uuuuuu] (0.,1.) circle (1.0pt);
\draw [fill=uuuuuu] (0.7341623110203971,4.392796751026526) circle (1.0pt);
\draw [fill=uuuuuu] (0.7341623110203971,0.392796751026526) circle (1.0pt);
\draw [fill=uuuuuu] (0.517773778227129,0.392796751026526) circle (1.0pt);
\draw [fill=uuuuuu] (0.3013852454338608,0.392796751026526) circle (1.0pt);
\draw [fill=uuuuuu] (0.08499671264059261,0.392796751026526) circle (1.0pt);
\draw [fill=uuuuuu] (0.517773778227129,3.3927967510265256) circle (1.0pt);
\draw [fill=uuuuuu] (0.3013852454338608,2.392796751026526) circle (1.0pt);
\draw [fill=uuuuuu] (0.08499671264059261,1.392796751026526) circle (1.0pt);
\draw [fill=uuuuuu] (0.2588868891135645,0.196398375513263)  ++(-1.5pt,0 pt) -- ++(1.5pt,1.5pt)--++(1.5pt,-1.5pt)--++(-1.5pt,-1.5pt)--++(-1.5pt,1.5pt);
\draw [fill=uuuuuu] (0.4752754219068327,0.196398375513263)  ++(-1.5pt,0 pt) -- ++(1.5pt,1.5pt)--++(1.5pt,-1.5pt)--++(-1.5pt,-1.5pt)--++(-1.5pt,1.5pt);
\draw [fill=uuuuuu] (0.6916639547001008,0.196398375513263)  ++(-1.5pt,0 pt) -- ++(1.5pt,1.5pt)--++(1.5pt,-1.5pt)--++(-1.5pt,-1.5pt)--++(-1.5pt,1.5pt);
\draw [fill=uuuuuu] (0.4752754219068327,3.1963983755132626)  ++(-1.5pt,0 pt) -- ++(1.5pt,1.5pt)--++(1.5pt,-1.5pt)--++(-1.5pt,-1.5pt)--++(-1.5pt,1.5pt);
\draw [fill=uuuuuu] (0.2588868891135645,2.196398375513263)  ++(-1.5pt,0 pt) -- ++(1.5pt,1.5pt)--++(1.5pt,-1.5pt)--++(-1.5pt,-1.5pt)--++(-1.5pt,1.5pt);
\draw [fill=uuuuuu] (0.042498356320296304,1.196398375513263)  ++(-1.5pt,0 pt) -- ++(1.5pt,1.5pt)--++(1.5pt,-1.5pt)--++(-1.5pt,-1.5pt)--++(-1.5pt,1.5pt);
\draw [fill=uuuuuu] (0.6916639547001008,4.1963983755132634)  ++(-1.5pt,0 pt) -- ++(1.5pt,1.5pt)--++(1.5pt,-1.5pt)--++(-1.5pt,-1.5pt)--++(-1.5pt,1.5pt);
\draw [fill=uuuuuu] (0.042498356320296304,0.196398375513263) ++(-1.5pt,0 pt) -- ++(1.5pt,1.5pt)--++(1.5pt,-1.5pt)--++(-1.5pt,-1.5pt)--++(-1.5pt,1.5pt);
\draw [color=uuuuuu] (0.,0.)-- ++(-1.5pt,-1.5pt) -- ++(3.0pt,3.0pt) ++(-3.0pt,0) -- ++(3.0pt,-3.0pt);
\draw [color=uuuuuu] (0.7341623110203971,0.)-- ++(-1.5pt,-1.5pt) -- ++(3.0pt,3.0pt) ++(-3.0pt,0) -- ++(3.0pt,-3.0pt);
\draw [color=uuuuuu] (0.517773778227129,0.)-- ++(-1.5pt,-1.5pt) -- ++(3.0pt,3.0pt) ++(-3.0pt,0) -- ++(3.0pt,-3.0pt);
\draw [color=uuuuuu] (0.30138524543386075,0.)-- ++(-1.5pt,-1.5pt) -- ++(3.0pt,3.0pt) ++(-3.0pt,0) -- ++(3.0pt,-3.0pt);
\draw [color=uuuuuu] (0.08499671264059261,0.)-- ++(-1.5pt,-1.5pt) -- ++(3.0pt,3.0pt) ++(-3.0pt,0) -- ++(3.0pt,-3.0pt);
\draw [color=uuuuuu] (1,0.)-- ++(-1.5pt,-1.5pt) -- ++(3.0pt,3.0pt) ++(-3.0pt,0) -- ++(3.0pt,-3.0pt);
\end{scriptsize}
\end{tikzpicture}
\end{figure}

According to the previous result,  we will prove some properties  for subspaces of $W^{s,p}$ whose functions have bounded first derivatives. Namely, we define the space $\mathbf{W}^{s,p}(\Omega):=W^{s,p}(\Omega)\cap C^{0,1}(\Omega)$. By the Sobolev embedding {theorem,} when $sp>d$ we have that $\mathbf{W}^{s+1,p}(\Omega)=W^{s+1,p}(\Omega)$.

\begin{lemma}\label{lemSobolevAdmissibleSpace}
Let $s,d\in\N$, and $ 1< p< \infty$. Given bounded Lipschitz domains $\Omega_j\subset \R^d$ and functions $f,g$ with $f(\Omega_1)= \Omega_2$ and $f$ {bi-Lipschitz}, then 
\begin{align}\label{eqInvarianceUnderbiLipschitzSOBOBis}
f \in \mathbf{W}^{s,p}(\Omega_1) \mbox{ and } g \in \mathbf{W}^{s,p}(\Omega_2) & \implies g \circ f \in {W}^{s,p}(\Omega_1),\\
\nonumber f \in \mathbf{W}^{s,\frac{ps}{s-1}}(\Omega_1)\mbox{ and } g \in {W}^{s,p}\cap L^\infty({\Omega_2}) & \implies   g \circ f \in {W}^{s,p}(\Omega_1)
\end{align}
(see Figure \ref{figComposition}) and the chain rule \rf{eqGeneralizedChainRule} applies (for $M=s$). Moreover,
\begin{equation}\label{eqInvarianceUnderInversionSOBOBis}
f \in \mathbf{W}^{s,p}(\Omega_1)  \implies f^{-1} \in {W}^{s,p}(\Omega_2),
\end{equation}
and \rf{eqDerivativesInverse} holds.
\end{lemma}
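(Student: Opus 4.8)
The plan is to mimic the proof of Lemma \ref{lemCsAdmissibleBanachBis}, replacing H\"older norms by Sobolev norms and using the interpolation inequalities of Lemma \ref{lemReadyForHolderInequality} to supply the exponents that make the H\"older estimates close. First I would justify the chain rule \rf{eqGeneralizedChainRule} for $M=s$: by Remark \ref{remChainRule} it suffices to check that the right-hand side of \rf{eqGeneralizedChainRule} belongs to $L^1_{\mathrm{loc}}(\Omega_1)$ for every $\vec{k}$ with $|\vec{k}|\leq s$, and in fact I will show it belongs to $L^p(\Omega_1)$. A generic summand is $D^{\vec{i}}g(f)\prod_{\ell=1}^{|\vec{i}|}D^{\alpha_\ell}f^{m(\vec{i})_\ell}$ with $1\leq|\vec{i}|\leq|\vec{k}|$, $|\alpha_\ell|\geq 1$ and $\sum_\ell|\alpha_\ell|=|\vec{k}|$. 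The change of variables induced by the bi-Lipschitz map $f$ gives $\norm{D^{\vec{i}}g(f)}_{L^r(\Omega_1)}\lesssim\norm{\nabla^{|\vec{i}|}g}_{L^r(\Omega_2)}$ for every $r$ (the case $r=\infty$ being the sup norm). In the first scenario $f,g\in\mathbf{W}^{s,p}$ I would bound $\nabla^{|\vec{i}|}g$ in $L^{p(s-1)/(|\vec{i}|-1)}(\Omega_2)$ and each $\nabla^{|\alpha_\ell|}f$ in $L^{p(s-1)/(|\alpha_\ell|-1)}(\Omega_1)$ via the first estimate of Lemma \ref{lemReadyForHolderInequality} (with the top-order derivative $\nabla^s$, when it occurs, taken directly from the $W^{s,p}$ hypothesis, and an exponent whose denominator vanishes read as $\infty$, which is exactly the $C^{0,1}$ information); since the corresponding reciprocals add up to $(|\vec{k}|-1)/(p(s-1))\leq 1/p$, H\"older's inequality puts the summand in $L^p(\Omega_1)$, with equality of exponents precisely when $|\vec{k}|=s$. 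In the second scenario $f\in\mathbf{W}^{s,ps/(s-1)}$ and $g\in W^{s,p}(\Omega_2)\cap L^\infty(\Omega_2)$, I would instead bound $\nabla^{|\vec{i}|}g$ in $L^{ps/|\vec{i}|}(\Omega_2)$ by the Gagliardo--Nirenberg inequality (equivalently, Proposition \ref{propoRunstSickel} applied to a Rychkov extension of $g$ as in Theorem \ref{theoRychkov}, together with the identification $F^{|\vec{i}|}_{ps/|\vec{i}|,2}=W^{|\vec{i}|,ps/|\vec{i}|}$), and each $\nabla^{|\alpha_\ell|}f$ in $L^{ps/(|\alpha_\ell|-1)}(\Omega_1)$ via Lemma \ref{lemReadyForHolderInequality} with base exponent $ps/(s-1)$; now the reciprocals add up to $|\vec{k}|/(ps)\leq 1/p$, again with equality when $|\vec{k}|=s$. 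This shows that \rf{eqGeneralizedChainRule} holds almost everywhere for $M=s$, and taking $L^p(\Omega_1)$-norms in the case $|\vec{k}|=s$ proves both implications of \rf{eqInvarianceUnderbiLipschitzSOBOBis}.

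For the inverse function statement I would argue by induction on $s$, as in the proof of \rf{eqInvarianceUnderInversionHOLD}. The case $s=1$ is immediate: $f^{-1}$ is Lipschitz on the bounded domain $\Omega_2$, hence lies in $W^{1,p}(\Omega_2)$, and \rf{eqInverseDerivatives} is the classical a.e.\ derivative formula for a bi-Lipschitz inverse. For $s\geq 2$, formula \rf{eqInverseDerivatives} expresses $Df^{-1}$ through the functions $g_{ij}\circ f^{-1}$ with $g_{ij}=P_{ij}(Df)/\det(Df)$. Since $f\in\mathbf{W}^{s,p}(\Omega_1)$ we have $Df\in W^{s-1,p}(\Omega_1)\cap L^\infty(\Omega_1)$, and because this space is a Banach algebra and $|\det(Df)|$ is bounded away from zero (bi-Lipschitz), $g_{ij}\in W^{s-1,p}(\Omega_1)\cap L^\infty(\Omega_1)$; the explicit form \rf{eqDerivativesInverse} of its derivatives then follows by induction on the order of differentiation from the product and quotient rules, every intermediate expression being in $L^1_{\mathrm{loc}}$. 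On the other hand, the first estimate of Lemma \ref{lemReadyForHolderInequality} applied to $f$ (with $j=s-1$) gives $\nabla^{s-1}f\in L^{p(s-1)/(s-2)}(\Omega_1)$, and all lower-order derivatives lie in better spaces, so $f\in\mathbf{W}^{s-1,p(s-1)/(s-2)}(\Omega_1)$; by the inductive hypothesis (with $s$ replaced by $s-1$ and $p$ by $p(s-1)/(s-2)$) we obtain $f^{-1}\in W^{s-1,p(s-1)/(s-2)}(\Omega_2)$, hence, being bi-Lipschitz, $f^{-1}\in\mathbf{W}^{s-1,p(s-1)/(s-2)}(\Omega_2)$. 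This is exactly the hypothesis in the ``$f$''-slot of the second implication of \rf{eqInvarianceUnderbiLipschitzSOBOBis} at smoothness $s-1$; taking $g_{ij}$ in the ``$g$''-slot we get $g_{ij}\circ f^{-1}\in W^{s-1,p}(\Omega_2)$, whence $Df^{-1}\in W^{s-1,p}(\Omega_2)$ and $f^{-1}\in W^{s,p}(\Omega_2)$. When $s=2$ the exponent $p(s-1)/(s-2)$ is read as $\infty$ and this last step degenerates into the standard fact (cf.\ \cite[Theorem 2.2.2]{Ziemer}) that the composition of a $W^{1,p}\cap L^\infty$ function with a bi-Lipschitz map belongs to $W^{1,p}$.

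The main obstacle, and the only step that is not pure bookkeeping, is this last point. Because $g_{ij}$ is bounded but in general not Lipschitz, the first implication of \rf{eqInvarianceUnderbiLipschitzSOBOBis} is unavailable for $g_{ij}\circ f^{-1}$ and one must invoke the second one; this in turn requires $f^{-1}$ to carry the higher integrability exponent $p(s-1)/(s-2)$ at smoothness $s-1$, which is available precisely because Lemma \ref{lemReadyForHolderInequality} delivers $\nabla^{s-1}f\in L^{p(s-1)/(s-2)}$ for $f$, so that the induction closes. The remaining ingredients---the exponent arithmetic above, the Banach algebra property of $W^{s-1,p}\cap L^\infty$ and its stability under division by functions bounded away from zero, and the change-of-variables estimates for compositions with bi-Lipschitz maps---are routine.
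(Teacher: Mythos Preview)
Your proof is correct and follows essentially the same route as the paper: both use Remark \ref{remChainRule} to reduce \rf{eqInvarianceUnderbiLipschitzSOBOBis} to checking that the right-hand side of \rf{eqGeneralizedChainRule} lies in $L^p$, with the same H\"older exponents $p_0=\tfrac{p(s-1)}{i-1}$, $p_\ell=\tfrac{p(s-1)}{\alpha_\ell-1}$ (resp.\ $p_0=\tfrac{ps}{i}$, $p_\ell=\tfrac{ps}{\alpha_\ell-1}$) supplied by Lemma \ref{lemReadyForHolderInequality}; and both run the same induction for \rf{eqInvarianceUnderInversionSOBOBis}, promoting $f$ to $\mathbf{W}^{s-1,p(s-1)/(s-2)}$ so that the second implication of \rf{eqInvarianceUnderbiLipschitzSOBOBis} applies to $g_{ij}\circ f^{-1}$. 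The only cosmetic difference is that the paper verifies $g_{ij}\in W^{s-1,p}(\Omega_1)$ by plugging the explicit formula \rf{eqDerivativesInverse} into a H\"older inequality with $p_\ell=\tfrac{p(s-1)}{|\gamma_\ell|-1}$, whereas you invoke the Banach algebra property of $W^{s-1,p}\cap L^\infty$ and its stability under inversion; the latter is of course proved by the same H\"older arithmetic, so the two arguments are interchangeable.
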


\begin{proof}
To check \rf{eqInvarianceUnderbiLipschitzSOBOBis}, the case $s=1$ is \cite[Theorem 2.2.2]{Ziemer}, so let us assume that $s\geq 2$.  Since both $f_1$ and $f_2$ are in $ W^{s,p}(\Omega_j)$, all their derivatives satisfy that $\nabla^i f_j\in L^{p\frac{s-1}{i-1}}$ in view of Lemma \ref{lemReadyForHolderInequality}. 

By Remark \ref{remChainRule}, we only need to check that the right-hand side of \rf{eqGeneralizedChainRule} is in $L^p$, and then by induction it follows that the chain rule applies. By H\"older's inequality, 
\begin{equation}\label{eqCompositionDecomposition}
\circled{1}:=\norm{\sum_{1\leq i\leq s} \sum_{\alpha\in\N^i: |\alpha|=s}   \nabla^i f_2(f_1) \prod_{\ell=1}^{i} \nabla^{\alpha_\ell}f_1}_{L^p} \lesssim_{d,s}  \sum_{1\leq i\leq s} \sum_{\alpha\in\N^i: |\alpha|=s}   \norm{\nabla^i f_2(f_1)}_{p_0} \prod_{\ell=1}^{i} \norm{\nabla^{\alpha_\ell}f_1}_{p_\ell},
\end{equation}
where $\sum_0^i \frac{1}{p_\ell}=\frac1p$. This can be achieved by letting ${p_0}=\frac{p(s-1)}{i-1}$ and ${p_\ell}=\frac{p(s-1)}{\alpha_\ell-1}$. {Note that both $i$ and $\alpha_\ell$ may equal $1$ in some occasions, in which case we consider the $L^\infty$ norm of a derivative of order one, which is finite by hypothesis.} Thus,  Lemma \ref{lemReadyForHolderInequality} applies and using Young's inequality for products  we get 
\begin{align}\label{eqGeneralizedChainRuleSobolev}
\circled{1}
	& \lesssim \sum_i \norm{\nabla f_1^{-1}}_{\infty}^{\frac{d(i-1)}{p(s-1)}}  \norm{f_2}_{ W^{s,p}(\Omega_2)}^{\frac{i-1}{s-1}}\norm{\nabla f_2}_{L^\infty(\Omega_2)}^{\frac{s-i}{s-1}}  \norm{f_1}_{ W^{s,p}(\Omega_1)}^{\frac{s-i}{s-1}}\norm{\nabla f_1}_{ L^\infty(\Omega_1)}^{\frac{is-s}{s-1}}\\
	\nonumber & \lesssim C_{f_1}   (\norm{f_2}_{{W}^{s,p}(\Omega_2)}\norm{\nabla f_1}_{L^\infty(\Omega_1)}^s+\norm{\nabla f_2}_{L^\infty(\Omega_2)}\norm{f_1}_{{W}^{s,p}(\Omega_1)}),
\end{align}
with the constant $C_{f_1}$ depending on the bi-Lipschitz character of $f_1$.

%
%

The second inclusion in \rf{eqInvarianceUnderbiLipschitzSOBOBis} can be shown analogously, setting ${p_0}=\frac{ps}{i}$ and ${p_\ell}=\frac{ps}{\alpha_\ell-1}$ in \rf{eqCompositionDecomposition}.  We leave the details to the reader.

The inverse function bound \rf{eqInvarianceUnderInversionSOBOBis} can be proven by the same methods using \rf{eqDerivativesInverse}. 
Indeed, \rf{eqInverseDerivatives} holds for every bi-Lipschitz function by the chain rule, and arguing as in Remark \ref{remChainRule}, we only need to check that the right-hand side in \rf{eqDerivativesInverse}  belongs to $L^p$ for every $|\alpha|=s$ in order to prove that \rf{eqDerivativesInverse} holds and that $g_{ij}\in W^{s-1,p}(\Omega_1)$. Indeed, 
\begin{align*}
& \norm{\sum_{ \substack{ \gamma\in(\N_0^d)^{s-1}\\ |\gamma_\ell|\geq 1\, \&\,\sum|\gamma_\ell|=2s-2}} |D f|^{(d-1)s} \prod_{\ell=1}^{s-1} |D^{\gamma_\ell} f| |D(f^{-1})|^{ds}}_{L^p} \\
	& \quad \lesssim_{d,s}   \sum_{ \substack{ \gamma\in(\N_0^d)^{s-1}\\ |\gamma_\ell|\geq 1\, \&\,\sum|\gamma_\ell|=2s-2}} \norm{D f}_\infty^{(d-1)s} \prod_{\ell=1}^{s-1} \norm{D^{\gamma_\ell} f}_{p_\ell}\norm{D(f^{-1})}_{L^\infty(\Omega)}^{ds}\\
	&\quad  \lesssim \norm{f}_{W^{s,p}(\Omega)}\norm{Df}_{L^\infty(\Omega)}^{ds-2}\norm{D(f^{-1})}_{L^\infty(\Omega)}^{ds}
\end{align*}
by choosing ${p_\ell}=\frac{p(s-1)}{|\gamma_\ell|-1}$ and applying Lemma \ref{lemReadyForHolderInequality} with $q=2$. We obtain that $g_{ij}\in W^{s-1,p}(\Omega_1)$ and \rf{eqDerivativesInverse} holds. 

On the other hand, if $s=2$ then $f\in C^{0,1}$ and $f^{-1}\in C^{0,1}$. By \rf{eqInverseDerivatives} and \rf{eqInvarianceUnderbiLipschitzSOBOBis} we get $(D(f^{-1}))_{ij}\in W^{s-1,p}(\Omega_2)$.
If, instead, $s>2$ then $f\in \mathbf{W}^{s-1,\frac{p(s-1)}{s-2}}(\Omega_1)$  by Lemma \ref{lemReadyForHolderInequality} and therefore, by induction, we can assume that \rf{eqInvarianceUnderInversionSOBOBis} holds in this case, so  $f^{-1} \in W^{s-1,\frac{p(s-1)}{s-2}}(\Omega_2)$ and applying the second estimate in \rf{eqInvarianceUnderbiLipschitzSOBOBis} to the composition in \rf{eqInverseDerivatives} we get that $(D(f^{-1}))_{ij}\in W^{s-1,p}(\Omega_2)$. 
\end{proof}


\subsection{Proof of Theorem \ref{theoTriebelAdmissibleBanachBis}: the Triebel-Lizorkin scale}
Finally we will verify that Triebel-Lizorkin spaces have the same properties.
Again, we define $\mathbf{F}^s_{p,q}(\Omega):=F^{s}_{p,q}(\Omega) \cap C^{0,1}(\Omega)$ for $k<s<k+1$. Note that when $sp>d$ we have that $\mathbf{F}^{s+1}_{p,q}(\Omega)=F^{s+1}_{p,q}(\Omega)$.

%

\begin{proof}[Proof of \rf{eqInvarianceUnderbiLipschitzTRIEBELBis}]
Let us write $\Omega=\Omega_1$. We begin by showing \rf{eqInvarianceUnderbiLipschitzTRIEBELBis}. 
{Since bounded Lipschitz domains are uniform, by Theorem \ref{theoDifference}, it is enough to check that 
\begin{equation}\label{eqQuantifyComposition}
\circled{1}:=\int_\Omega \left(\int_0^1 \frac{\left( \int_{B(x,t)\cap\Omega} |\nabla^k (g\circ f)(x)-\nabla^k (g\circ f)(y)| dy\right)^q}{t^{(\sigma+d)q}}\frac{dt}{t} \right)^\frac pq dx  \leq C_{f}^p\norm{g}_{\mathbf{F}^{s}_{p,q}(\Omega_2)}^p,
\end{equation}
{where $\sigma=s-k$.}

The case ${k}=0$ follows using $f$ as a bi-Lipschitz change of variables in the integrals in $x$ and $y$:
\begin{align*}
\circled{1} 
	&  \leq C_f \int_{f(\Omega)} \left(\int_0^1 \frac{\left( \int_{f(B(x,t)\cap\Omega)} |g(x)- g(y)| dy\right)^q}{t^{(\sigma+d)q}}\frac{dt}{t} \right)^\frac pq dx \\
	&  \leq C_f \int_{\Omega_2} \left(\int_0^1 \frac{\left( \int_{B(x,C_f t)\cap\Omega_2} |g(x)- g(y)| dy\right)^q}{t^{(\sigma+d)q}}\frac{dt}{t} \right)^\frac pq dx.
\end{align*}
The right hand-side expression corresponds to the norm of a rescaling of $g$ in a rescaled domain, and therefore it is bounded by $C_{d,p,q,\sigma}C_{f}\norm{g}_{\mathbf{F}^{s}_{p,q}(\Omega_2)}^p$. 

Let us assume $ k \geq 1$. Note that $\mathbf{F}^s_{p,q}(\Omega_j)\subset \mathbf{W}^{s,p}(\Omega_j)$, and by} Lemma \ref{lemSobolevAdmissibleSpace} we can use the chain rule 
almost everywhere and in particular \rf{eqGeneralizedChainRuleDifferences} applies.  However, after considering \rf{eqGeneralizedChainRuleDifferences}, the reader will note that there are functions on $x$ and functions on $y$ in the integrand, and this is an obstruction for using H\"older inequalities as it was done in the previous proof. Instead, we need to write all the functions depending on $x$ and then address all the terms appearing in a telescopic summation. To keep the notation compact, we write $\Delta_h g(x):=g(x+h)-g(x)$ for $h=y-x$ in \rf{eqGeneralizedChainRuleDifferences}. It follows that $\Delta_h(g_1g_2)=\Delta_h g_1\Delta_hg_2+\Delta_h g_1 g_2+g_1\Delta_hg_2$ and, by induction,

\begin{equation}\label{eqDifferencesProduct}
\prod_{i=1}^\ell g_i (x+h) =  \sum_{\nu \in \{0,1\}^{\ell}} \prod_{r\leq \ell: \nu_r=1} \Delta_h g_r (x)  \prod_{e\leq \ell: \nu_e=0} g_e(x).
\end{equation} 
Combining with \rf{eqGeneralizedChainRuleDifferences}  we get
\begin{align}\label{eqGeneralizedChainRuleDifferencesLonger}
 |\Delta_h &\nabla^k (g\circ f) (x)|\\
\nonumber	& \lesssim  \sum_{1\leq i\leq k} |\Delta_h [( \nabla^{i} g)\circ f](x)| \sum_{\alpha\in\N^i: |\alpha|=k} \prod_{j=1}^{i} | \nabla^{\alpha_j} f (x)| \\
\nonumber	&  + \sum_{1\leq i\leq k} | \nabla^i g(f(x))| \sum_{\substack{\alpha\in\N^i\\ |\alpha|=k} } \sum_{\ell=1}^{i} \sum_{\substack{\nu \in \{0,1\}^{i} \\ \nu_\ell=1 \\ \nu_r=0   \,\forall r>\ell}} \prod_{r\leq i: \nu_r=1}|\Delta_h (\nabla^{\alpha_r} f) (x) | \prod_{e\leq i: \nu_e=0}
|\nabla^{\alpha_e} f (x)|\\
\nonumber	&  + \sum_{1\leq i\leq k} |\Delta_h [( \nabla^{i} g)\circ f](x)|  \sum_{\substack{\alpha\in\N^i\\ |\alpha|=k} } \sum_{\ell=1}^{i} \sum_{\substack{\nu \in \{0,1\}^{i} \\ \nu_\ell=1 \\ \nu_r=0 \,\forall r>\ell}} \prod_{r\leq i: \nu_r=1}|\Delta_h (\nabla^{\alpha_r} f) (x) | \prod_{e\leq i: \nu_e=0}|\nabla^{\alpha_e} f (x)|.
\end{align}

Plugging this decomposition in the numerator of the integrand in \rf{eqQuantifyComposition} {and applying the triangle inequality, we get an expression with several terms which we summarize as follows:}
 \begin{align}\label{eqBreakTheBesovNormOfComposition}
\circled{1}
	& \lesssim \sum_{1\leq i\leq k}\sum_{\substack{\alpha\in\N^i\\ |\alpha|=k} }  \squared{$2i\alpha$}+ \sum_{1\leq i\leq k} \sum_{\substack{\alpha\in\N^i\\ |\alpha|=k} } \sum_{\ell=1}^{i} \sum_{\substack{\nu \in \{0,1\}^{i} \\ \nu_\ell=1 \\ \nu_r=0   \,\forall r>\ell}} \left(\squared{$3i\alpha\ell\nu$}+\squared{$4i\alpha\ell\nu$}\right).
\end{align}
{The precise definition of each summand is what the reader may expect, and will be provided in due time. To clarify concepts, one can understand for instance $\squared{$4i\alpha\ell\nu$}$ as the term of \emph{fourth kind} which depends on particular values of $i$, $\alpha$, $\ell$ and $\nu$, which are indices with the restrictions specified in the summation. Note that} the coefficients $\alpha_j$ are all strictly positive natural numbers.
Regarding the first term, we have
\begin{align*}
\squared{$2i\alpha$} 
	& = \int_\Omega \left(\int_0^1 \frac{ \left( \int_{(B(x,t)\cap\Omega)-x} |\Delta_h [( \nabla^{i} g)\circ f](x)| dh\right)^q }{t^{(\sigma+d)q}}\frac{dt}{t} \right)^\frac pq \prod_{j=1}^{i} | \nabla^{\alpha_j} f (x)|^p dx \\
	& \lesssim \norm{\nabla f^{-1}}_\infty^{\frac {dp}{p_0}+ {dp}} \norm{\nabla f}_\infty^{\left(\sigma+d\right)p} \norm{\nabla^{i} g}_{F^{\sigma}_{p_{0},q}(\Omega_2)}^p \prod_{j=1}^{i} \norm{\nabla^{\alpha_j}  f}_{L^{p_{j}}(\Omega_1)}^p,
\end{align*}
where $\sum_0^i \frac{1}{p_{j}}=\frac1p$. Note that we have used $f$ as a bi-Lipschitz change of variables to obtain the $F^\sigma_{p_0,q}$ norm in the right-hand side of the last inequality above. Letting ${p_0}=\frac{p(s-1)}{i+\sigma-1}$ and ${p_j}=\frac{p(s-1)}{\alpha_j-1}$ so that we can apply Lemma \ref{lemReadyForHolderInequality}, and noting that $\sum_1^i  \alpha_j-1=k-i$ and $\sum_1^i s-\alpha_j=i s-k$, we get
\begin{align*}
\squared{$2i\alpha$}^\frac1p
	& \lesssim \norm{\nabla f^{-1}}_\infty^{\frac d{p_0}+d} \norm{\nabla f}_\infty^{d}  \norm{g}_{F^{s}_{p,q}(\Omega_2)}^\frac{i+\sigma-1}{s-1}\norm{\nabla g}_\infty^\frac{s-i-\sigma}{s-1} \norm{f}_{F^{s}_{p,q}(\Omega_1)}^{\sum_1^i  \frac{\alpha_j-1}{s-1}} \norm{\nabla f}_\infty^{\sigma + \sum_1^i \frac{s-\alpha_j}{s-1}}\\
	&= C_{f}  \left(\norm{g}_{F^{s}_{p,q}(\Omega_2)}\norm{\nabla f}_\infty^{s}\right)^\frac{i+\sigma-1}{s-1}\left(\norm{\nabla g}_\infty\norm{f}_{F^{s}_{p,q}(\Omega_1)}\right)^\frac{k-i}{s-1} \\
	&\leq C_{f} \left( \norm{g}_{F^{s}_{p,q}(\Omega_2)}\norm{\nabla f}_\infty^{s} + \norm{\nabla g}_\infty\norm{f}_{F^{s}_{p,q}(\Omega_1)}\right),
\end{align*}
where $C_{f}=\norm{\nabla f^{-1}}_\infty^{\frac d{p_0}+d} \norm{\nabla f}_\infty^{d} $ depends only on the bi-Lipschitz character of $f$.

For the second term in the right-hand side of \rf{eqBreakTheBesovNormOfComposition}, we need to apply H\"older inequality three times, once for each variable. Namely, writing $U_{x}^t:=B(x,t)\cap\Omega-x$, 
\begin{align*}
\squared{$3i\alpha\ell\nu$} 
	& = \int_\Omega \left(\int_0^1  \frac{\left( \int_{U_{x}^t}\prod_{r\leq i: \nu_r=1}|\Delta_h (\nabla^{\alpha_r} f) (x) | dh\right)^q}{t^{(\sigma+d)q}} \frac{dt}{t}\right)^\frac pq | \nabla^i g(f(x))|^p \prod_{e\leq i: \nu_e=0} |\nabla^{\alpha_e} f (x)|^p dx\\
	& \leq \int_\Omega \prod_{r\leq i: \nu_r=1} \left(\int_0^1  \frac{\left( \int_{U_{x}^t}|\Delta_h (\nabla^{\alpha_r} f) (x) |^{u_r} dh\right)^\frac{q_r}{u_r} }{t^{(\sigma+d)q}} \frac{dt}{t}\right)^\frac {p}{q_r} | \nabla^i g(f(x))|^p \prod_{e\leq i: \nu_e=0} |\nabla^{\alpha_e} f (x)|^p dx
\end{align*}
where we assume that $\sum_r \frac1{u_r}=\sum_r \frac q{q_r}=1$. In particular, let us fix $q_r:=u_r q$ so that  $(\sigma+d)q=(\frac{\sigma q}{q_r}+\frac{d}{u_r})q_r$. Take also $\sum_0^i \frac{1}{p_{j}}=\frac1p$ and apply H\"older's inequality again to get
\begin{align*}
\squared{$3i\alpha\ell\nu$} 
	& \lesssim \norm{\nabla f^{-1}}_\infty^{\frac {dp}{p_0}} \norm{\nabla^i g}_{L^{p_{0}}(\Omega_2)}^p \prod_{r\leq i: \nu_r=1} \norm{\nabla^{\alpha_r} f}_{F^{\sigma q/q_r}_{p_r,q_r}(\Omega_1)}^p \prod_{e\leq i: \nu_e=0} \norm{\nabla^{\alpha_e} f }_{L^{p_{e}}(\Omega_1)}^p, 
\end{align*}
as long as $1\leq u_r\leq \min\{p_r,q_r\} \frac{d+\sigma}{d}$. 

The fact that $u_r\leq q_r$ is clear from the definition of $q_r$. 
Let us write $M=\sum_{r:\nu_r=1}(\alpha_r-1)$ and define $u_r:= \frac{M}{\alpha_r - 1}$, $p_0:= \frac{p(s-1)}{i-1}$, $p_r=\frac{p(s-1)}{\alpha_r+\sigma/u_r -1}$ and $p_e=\frac{p(s-1)}{\alpha_e-1}$. Note that $\sum \frac1{u_r}=1$ and  $1\leq u_r$ trivially, while the condition $u_r \leq p_r$ is equivalent to $u_r(\alpha_r-1)\leq p(s-1) -\sigma$, that is, equivalent to $M\leq p(s-1) -\sigma$. But $M\leq |\alpha|-1= k-1=s-1-\sigma\leq p(s-1) -\sigma$, and thus it follows that $u_r\leq p_r$.

Thus, we can apply Lemma \ref{lemReadyForHolderInequality} again to get
\begin{align*}
\squared{$3i\alpha\ell\nu$}^\frac1p
	& \lesssim C_{f} \norm{g}_{F^{s}_{p,q}(\Omega_2)}^\frac{i-1}{s-1}\norm{\nabla g}_\infty^\frac{s-i}{s-1} \norm{f}_{F^{s}_{p,q}(\Omega_1)}^{\sum_r  \frac{\alpha_r+\sigma/u_r-1}{s-1}+\sum_e  \frac{\alpha_e-1}{s-1}} \norm{\nabla f}_\infty^{ \sum_r \frac{s-\alpha_r-\sigma/u_r}{s-1}+\sum_e \frac{s-\alpha_e}{s-1}}\\
	& \leq C_{f} \left( \norm{g}_{F^{s}_{p,q}(\Omega_2)}\norm{\nabla f}_\infty^{s} + \norm{\nabla g}_\infty\norm{f}_{F^{s}_{p,q}(\Omega_1)}\right),
\end{align*}
where $C_{f}= \norm{\nabla f^{-1}}_\infty^{\frac d{p_0}} $ depends only on the bi-Lipschitz character of $f$.

For the last term in the right-hand side of \rf{eqGeneralizedChainRuleDifferencesLonger}, we argue analogously to get
\begin{align*}
\squared{$4i\alpha\ell\nu$} 
	& =  \int_\Omega \left(\int_0^1  \frac{\left( \int_{U_{x}^t} |\Delta_h [( \nabla^{i} g)\circ f](x)| \prod_{\substack{r\leq i\\ \nu_r=1}}|\Delta_h (\nabla^{\alpha_r} f) (x) | dh\right)^q}{t^{(\sigma+d)q}} \frac{dt}{t}\right)^\frac pq  \prod_{\substack{e\leq i\\ \nu_e=0}} |\nabla^{\alpha_e} f (x)|^p dx\\
	& \leq  \int_\Omega \left(\int_0^1  \frac{\left( \int_{U_{x}^t} |\Delta_h [( \nabla^{i} g)\circ f](x)|^{u_0} dh\right)^\frac{q_0}{u_0}}{t^{(\sigma+d)q}} \frac{dt}{t}\right)^\frac p{q_0} \\
	&\quad  \cdot\left(\int_0^1  \frac{\left( \int_{U_{x}^t}\prod_{\substack{r\leq i\\ \nu_r=1}}|\Delta_h (\nabla^{\alpha_r} f) (x) |^{u_r} dh\right)^\frac{q_r}{u_r}}{t^{(\sigma+d)q}} \frac{dt}{t}\right)^\frac p{q_r}  \prod_{\substack{e\leq i\\ \nu_e=0}} |\nabla^{\alpha_e} f (x)|^p dx,
\end{align*}
where we assume
$\frac1{u_0}+\sum_r \frac1{u_r}=\frac{q}{q_0}+\sum_r \frac q{q_r}=1$. In particular, let us fix $q_0:=u_0 q$ and $q_r:=u_r q$ so that $(\sigma+d)q=(\frac{\sigma q}{q_r}+\frac{d}{u_r})q_r$. Take also $\sum_0^i \frac{1}{p_{j}}=\frac1p$ and apply H\"older's inequality again to get
\begin{align*}
\squared{$4i\alpha\ell\nu$} 
	& \lesssim \norm{\nabla f^{-1}}_\infty^{\frac {dp}{p_0}+\frac {dp}{u_0}} \norm{\nabla f}_\infty^{\frac{(\sigma +d) p}{u_0}} \norm{\nabla^i g}_{F^{\sigma q/q_0}_{p_0,q_0}(\Omega_2)}^p \prod_{\substack{r\leq i\\ \nu_r=1}} \norm{\nabla^{\alpha_r} f}_{F^{\sigma q/q_r}_{p_r,q_r}(\Omega_1)}^p \prod_{\substack{e\leq i\\ \nu_e=0}} \norm{\nabla^{\alpha_e} f }_{L^{p_{e}}(\Omega_1)}^p, 
\end{align*}
as long as $1\leq u_r\leq \min\{p_r,q_r\}  \frac{d+\sigma}{d}$. 

The fact that $u_r\leq q_r$ is clear from the definition of $q_r$. 
Let us write $M=\sum_r(\alpha_r-1)$ and define $u_0:=\frac{M+i-1}{i-1}$, $u_r:= \frac{M+i-1}{\alpha_r - 1}$, $p_0:= \frac{p(s-1)}{i+\sigma/u_0-1}$, $p_r=\frac{p(s-1)}{\alpha_r+\sigma/u_r -1}$ and $p_e=\frac{p(s-1)}{\alpha_e-1}$. Note that $\sum \frac1{u_r}=1$ and  $1\leq u_r$ trivially, while the condition $u_r \leq p_r$ is equivalent to $M+i-1\leq p(s-1) -\sigma$. But 
$$M+i-1\leq \sum_{j=1}^i(\alpha_j-1)+i-1= |\alpha|-1= k-1=s-1-\sigma\leq p(s-1) -\sigma,$$ and thus it follows that $u_r\leq p_r$.

Thus, we can apply Lemma \ref{lemReadyForHolderInequality} again to get
\begin{align*}
\squared{$4i\alpha\ell\nu$}^\frac1p
	& \lesssim C_{f} \norm{g}_{F^{s}_{p,q}(\Omega_2)}^\frac{i+\sigma/u_0-1}{s-1}\norm{\nabla g}_\infty^\frac{s-i-\sigma/u_0}{s-1} \norm{f}_{F^{s}_{p,q}(\Omega_1)}^{\sum_r  \frac{\alpha_r+\sigma/u_r-1}{s-1}+\sum_e  \frac{\alpha_e-1}{s-1}} \norm{\nabla f}_\infty^{ \frac\sigma {u_0}+\sum_r \frac{s-\alpha_r-\sigma/u_r}{s-1}+\sum_e \frac{s-\alpha_e}{s-1}}\\
	& \leq C_{f} \left( \norm{g}_{F^{s}_{p,q}(\Omega_2)}\norm{\nabla f}_\infty^{s} + \norm{\nabla g}_\infty\norm{f}_{F^{s}_{p,q}(\Omega_1)}\right),
\end{align*}
where $C_{f}=\norm{\nabla f^{-1}}_\infty^{\frac d{p_0}+\frac d{u_0}}  \norm{\nabla f}_\infty^{\frac {d}{u_0}}$ depends only on the bi-Lipschitz character of $f$.

Combining these estimates with \rf{eqBreakTheBesovNormOfComposition}, we obtain \rf{eqInvarianceUnderbiLipschitzTRIEBELBis}. In particular, we obtain \rf{eqQuantifyComposition}, where the constant $C_{f}$ is affine with respect to the  $F^{s}_{p,q}(\Omega)$ norm of $f$ and depends polynomially on its bi-Lipschitz constants, as well as on $d$, $s$, $p$, $q$ and the extension constants of the domains for all the different indices $p_j,q_j,u_j$ appearing in the proof.  
\end{proof}

\begin{figure}[t]
\caption{Composition rule for Sobolev and Triebel-Lizorkin scales of a bounded continuous function $g$ and a bi-Lipschitz function $f$ in {Lemmata} \ref{lemSobolevAdmissibleSpace} and \ref{theoTriebelAdmissibleBanachBisBisBis}.}\label{figComposition}

\center
\begin{tikzpicture}[line cap=round,line join=round,>=triangle 45,x=3.8cm,y=0.7cm]
\clip(-0.41,-0.9277074696345513) rectangle (1.45,5.4);
\draw [line width=.5pt,color=grisfosc] (0.,0.) -- (0.,5.203518980893238);
\draw [line width=.5pt,color=grisfosc] (1.,0.) -- (1.,5.203518980893238);
\draw [line width=.5pt,color=grisfosc] (0.,5.)-- (1.,5.);
\draw [line width=.5pt,color=grisfosc] (0.,3.)-- (1.,3.);
\draw [line width=.5pt,color=grisfosc] (0.,4.)-- (1.,4.);
\draw [line width=.5pt,color=grisfosc] (0.,2.)-- (1.,2.);
\draw [line width=.5pt,color=grisfosc] (0.,1.)-- (1.,1.);
\draw [line width=.5pt,color=grisfosc] (0.,0.)-- (1.,0.);
\begin{scriptsize}
\draw (-0.13,5.3) node[anchor=north west] {$s$};
\draw (0.94,-0.1) node[anchor=north west] {$\frac11$};
\draw (-0.06,-0.1) node[anchor=north west] {$\frac1\infty$};
\draw (-0.12,0.45) node[anchor=north west] {$C^0$};
\draw (-0.12,1.45) node[anchor=north west] {$C^1$};
\draw [fill=uuuuuu] (0.,1.) circle (1.0pt);
\draw [fill=uuuuuu] (0.,0.) circle (1.0pt);
\end{scriptsize}
\draw [line width=0.5pt,dotted,color=cqcqcq] (0.3441557837739109,0.) -- (0.3441557837739109,5.203518980893241);
\draw [line width=0.5pt,dotted,color=cqcqcq] (0.45437183207571896,0.) -- (0.45437183207571896,5.203518980893241);
\draw [line width=0.5pt,dotted,color=cqcqcq] (0.6113256095254408,0.) -- (0.6113256095254408,5.203518980893241);
\draw [line width=0.5pt,dotted,color=cqcqcq] (0.9169884142881612,0.) -- (0.9169884142881612,5.203518980893241);
\begin{scriptsize}
\draw (0.22438999981632807,0) node[anchor=north west] {$\frac{s_2-1}{s_2p_2}$};
\draw (0.38,-.08) node[anchor=north west] {$\frac1{p_2}$};
\draw (0.525339918478971,0) node[anchor=north west] {$\frac{s_1-1}{s_1p_1}$};
\draw (0.85,-.08) node[anchor=north west] {$\frac1{p_1}$};
\draw [color=uuuuuu] (0.3441557837739109,0.)-- ++(-1.5pt,-1.5pt) -- ++(3.0pt,3.0pt) ++(-3.0pt,0) -- ++(3.0pt,-3.0pt);
\draw [color=uuuuuu] (0.6113256095254408,0.)-- ++(-1.5pt,-1.5pt) -- ++(3.0pt,3.0pt) ++(-3.0pt,0) -- ++(3.0pt,-3.0pt);
\draw [color=uuuuuu] (0.45437183207571896,0.)-- ++(-1.5pt,-1.5pt) -- ++(3.0pt,3.0pt) ++(-3.0pt,0) -- ++(3.0pt,-3.0pt);
\draw [color=uuuuuu] (0.9169884142881612,0.)-- ++(-1.5pt,-1.5pt) -- ++(3.0pt,3.0pt) ++(-3.0pt,0) -- ++(3.0pt,-3.0pt);
\end{scriptsize}

\draw [line width=.5pt,color=grisfosc]  (0.3441557837739109,4.122556007737622)-- (0.,1.);
\draw [line width=.5pt,color=grisfosc]  (0.45437183207571896,4.122556007737622)-- (0.,0.);
\draw [line width=.5pt,color=grisfosc]  (0.3441557837739109,4.122556007737622)-- (0.45437183207571896,4.122556007737622);
\begin{scriptsize}
\draw [fill=uuuuuu] (0.3441557837739109,4.122556007737622) circle (1.0pt);
\draw [fill=uuuuuu] (0.45437183207571896,4.122556007737622) circle (1.0pt);
\draw (0.4669924852688668,4.687604834614424) node[anchor=north west] {$F^{s_2}_{p_2,q}$};
\draw (0.16, 5) node[anchor=north west] {$F^{s_2}_{\frac{s_2p_2}{s_2-1},q}$};
\end{scriptsize}

\draw [line width=.5pt,color=grisfosc]  (0.,1.)-- (0.6113256095254408,3.);
\draw [line width=.5pt,color=grisfosc]  (0.,0.)-- (0.9169884142881612,3.);
\draw [line width=.5pt,color=grisfosc]  (0.6113256095254408,3.)-- (0.9169884142881612,3.);
\begin{scriptsize}
\draw [fill=uuuuuu] (0.6113256095254408,3.) circle (1.0pt);
\draw [fill=uuuuuu] (0.9169884142881612,3.) circle (1.0pt);
\draw (0.9,3.5206561704123396) node[anchor=north west] {$W^{s_1,p_1}$};
\draw (0.5,3.8) node[anchor=north west] {$W^{s_1,\frac{s_1p_1}{s_1-1}}$};
\end{scriptsize}
\end{tikzpicture}
\end{figure}

\begin{lemma}\label{theoTriebelAdmissibleBanachBisBisBis}
Let $0<s<\infty$, $s\notin \N$, let $ 1\leq p < \infty$,  $ 1\leq q \leq \infty$ and $d\in\N$. Given bounded Lipschitz domains $\Omega_j\subset \R^d$ and a bi-Lipschitz function $f$ with $f(\Omega_1)= \Omega_2$, then 
\begin{align}\label{eqInvarianceUnderbiLipschitzTRIEBELBisBis}
 f \in \mathbf{F}^{s}_{\frac{ps}{s-1},q}(\Omega_1)\mbox{ and } g \in F^{s}_{p,q}\cap L^\infty(\Omega_2) &\implies   g \circ f \in F^{s}_{p,q}(\Omega_1)
\end{align}
(see Figure \ref{figComposition})
\end{lemma}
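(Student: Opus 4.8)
The plan is to mimic the proof of \rf{eqInvarianceUnderbiLipschitzTRIEBELBis} just completed, but with the bookkeeping of H\"older exponents adapted to the hypotheses of \rf{eqInvarianceUnderbiLipschitzTRIEBELBisBis}, exactly as the second inclusion in \rf{eqInvarianceUnderbiLipschitzSOBOBis} was obtained from the first one in Lemma \ref{lemSobolevAdmissibleSpace}. First I would reduce, via Theorem \ref{theoDifference}, to estimating the quantity $\circled{1}$ of \rf{eqQuantifyComposition} with $g\circ f$, and observe that the case $k=0$ is immediate from Theorem \ref{theoDifference} together with the bi-Lipschitz change of variables, so one may assume $k\geq 1$. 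For $k\geq1$ one needs the chain rule to hold a.e.: here $g\in F^s_{p,q}\cap L^\infty$ so by Lemma \ref{lemReadyForHolderInequality} (applied to $g$, which lies in $W^{s,p}\cap L^\infty$ only after noting $sp$ need not exceed $d$, but the weak-derivative interpolation estimates still give $\nabla^i g\in L^{ps/i}$ and $\nabla^i g\in F^{\sigma/M}_{\frac{ps}{i+\sigma/M},r}$), while $f\in\mathbf{F}^s_{\frac{ps}{s-1},q}$ gives $\nabla^{\alpha}f\in L^{\frac{ps}{\alpha}}$-type bounds; then Remark \ref{remChainRule} justifies \rf{eqGeneralizedChainRule} once the right-hand side is checked to be locally $L^1$, which will follow from the same H\"older computation.

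Next I would use the decomposition \rf{eqGeneralizedChainRuleDifferencesLonger} of $|\Delta_h\nabla^k(g\circ f)(x)|$ into the three families of terms, leading to the analogue of \rf{eqBreakTheBesovNormOfComposition} with the boxes $\squared{$2i\alpha$}$, $\squared{$3i\alpha\ell\nu$}$, $\squared{$4i\alpha\ell\nu$}$. Each box is handled by the identical sequence of manoeuvres: a bi-Lipschitz change of variables to move the difference of $\nabla^i g$ onto $\Omega_2$, a triple application of H\"older's inequality (one in the $h$-variable with exponents $u_j$, one in the $t$-variable with exponents $q_j=u_jq$ chosen so that $(\sigma+d)q=(\tfrac{\sigma q}{q_j}+\tfrac d{u_j})q_j$, and one in the $x$-variable with exponents $p_j$ summing to $1/p$), and then Lemma \ref{lemReadyForHolderInequality} to convert each factor into powers of $\norm{f}_{F^s_{\frac{ps}{s-1},q}}$, $\norm{\nabla f}_\infty$, $\norm{g}_{F^s_{p,q}}$, $\norm{g}_\infty$, closing with Young's inequality for products. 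The only change from the previous proof is the arithmetic of the exponents: one sets $p_0$ and the $p_j$ so that $\sum 1/p_j=1/p$ works out with the scaling $\nabla^i g\sim L^{ps/i}$, i.e.\ $p_0=\tfrac{ps}{i}$ (or $\tfrac{ps}{i+\sigma/u_0}$ in box $\squared{$4$}$) and $p_j=\tfrac{ps}{\alpha_j}$ (or $\tfrac{ps}{\alpha_j+\sigma/u_j}$), mirroring the substitution ``${p_0}=\frac{ps}{i}$ and ${p_\ell}=\frac{ps}{\alpha_\ell-1}$'' already flagged for \rf{eqInvarianceUnderbiLipschitzSOBOBis}; since $\sum_{j=1}^i\alpha_j=k<s$ the exponents are admissible and the target norms of $f$ and $\nabla g$ appear to first power, giving the required bound $\circled{1}^{1/p}\lesssim C_f(\norm{g}_{F^s_{p,q}(\Omega_2)}\norm{\nabla f}_\infty^s+\norm{\nabla g}_\infty\norm{f}_{F^s_{\frac{ps}{s-1},q}(\Omega_1)})$.

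The step I expect to be the main obstacle is verifying that the side conditions required to invoke Lemma \ref{lemReadyForHolderInequality} and Theorem \ref{theoDifference} are still met with this new choice of exponents, in particular the constraints $1\leq u_r\leq\min\{p_r,q_r\}\frac{d+\sigma}{d}$ that legitimise the characterisation \rf{eqEquivalentNormFSPQ} for the intermediate spaces $F^{\sigma q/q_r}_{p_r,q_r}$ appearing for the derivatives of $f$, together with the auxiliary-index hypothesis $\sigma>\frac d{p\wedge q}-\frac du$ of Theorem \ref{theoDifference}. With the smaller base integrability $\tfrac{ps}{s-1}$ for $f$ one has less room, so I would define $u_r:=\frac{M}{\alpha_r-1}$ (resp.\ $u_0:=\frac{M+i-1}{i-1}$, $u_r:=\frac{M+i-1}{\alpha_r-1}$ in box $\squared{$4$}$) with $M=\sum_{r:\nu_r=1}(\alpha_r-1)$, exactly as before, and check that $u_r\le p_r$ reduces, after clearing denominators, to an inequality of the shape $M\le \tfrac{ps}{s-1}\cdot(\text{something})-\sigma$ which holds because $M\le k-1=s-1-\sigma$ and $\tfrac{ps}{s-1}\ge 1$. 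Once all these inequalities are confirmed, the estimates for the three boxes run verbatim as in the proof of \rf{eqInvarianceUnderbiLipschitzTRIEBELBis}, and summing over the finitely many indices $i,\alpha,\ell,\nu$ yields \rf{eqInvarianceUnderbiLipschitzTRIEBELBisBis}; I would leave these routine but tedious exponent verifications to the reader, as was done for the Sobolev counterpart.
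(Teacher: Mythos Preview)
Your approach is exactly the paper's: rerun the proof of \rf{eqInvarianceUnderbiLipschitzTRIEBELBis} with a different bookkeeping of H\"older exponents, and the paper in fact merely records the new choices ($p_0=\frac{ps}{i+\sigma}$, $p_j=\frac{ps}{\alpha_j-1}$ in \squared{$2i\alpha$}; $p_0=\frac{ps}{i}$, $p_r=\frac{ps}{\alpha_r+\sigma/u_r-1}$, $p_e=\frac{ps}{\alpha_e-1}$, $u_r=\frac{M}{\alpha_r-1}$ in \squared{$3i\alpha\ell\nu$}; and $p_0=\frac{ps}{i+\sigma/u_0}$, $p_r=\frac{ps}{\alpha_r+\sigma/u_r-1}$, $p_e=\frac{ps}{\alpha_e-1}$, $u_0=\frac{M+i}{i}$, $u_r=\frac{M+i}{\alpha_r-1}$ in \squared{$4i\alpha\ell\nu$}) and leaves the rest to the reader.

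Two slips in your write-up to correct. First, the $f$-exponents are $p_j=\frac{ps}{\alpha_j-1}$, not $\frac{ps}{\alpha_j}$: with your values the identity $\sum_0^i 1/p_j=1/p$ fails, and moreover $p_j=\frac{ps}{\alpha_j-1}$ is precisely what Lemma \ref{lemReadyForHolderInequality} yields when applied with base integrability $\frac{ps}{s-1}$ in place of $p$. Second, and more importantly, the final estimate must carry $\norm{g}_{L^\infty}$ rather than $\norm{\nabla g}_\infty$: the hypothesis is only $g\in F^s_{p,q}\cap L^\infty$, so the interpolation for the $g$-factors is Proposition \ref{propoRunstSickel} applied directly to $g$ (with $\Theta=i/s$, resp.\ $\Theta=(i+\sigma/u_0)/s$), not Lemma \ref{lemReadyForHolderInequality} which interpolates against $\norm{\nabla f}_\infty$. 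This shift is exactly why the $p_0$-values and the $u_0$ in \squared{$4i\alpha\ell\nu$} differ by one unit from those in the proof of \rf{eqInvarianceUnderbiLipschitzTRIEBELBis}.
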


\begin{proof}
The proof is just a modification of the proof of \rf{eqInvarianceUnderbiLipschitzTRIEBELBis}. One has to set ${p_0}=\frac{ps}{i+\sigma}$ and ${p_j}=\frac{ps}{\alpha_j-1}$ in \squared{$2i\alpha$}, ${p_0}=\frac{ps}{i}$, ${p_r}=\frac{ps}{\alpha_r+\sigma/u_r-1}$, ${p_e}=\frac{ps}{\alpha_e-1}$ and $u_r=\frac{M}{\alpha_r-1}$ in 
\squared{$3i\alpha\ell\nu$} and ${p_0}=\frac{ps}{i+\sigma/u_0}$, ${p_r}=\frac{ps}{\alpha_r+\sigma/u_r-1}$, ${p_e}=\frac{ps}{\alpha_e-1}$,  $u_0=\frac{M+i}{i}$ and $u_r=\frac{M+i}{\alpha_r-1}$ in \squared{$4i\alpha\ell\nu$}. We leave the details to the reader.
\end{proof}

\begin{remark}The precise dependence obtained in the preceeding proofs is 
$$\norm{g \circ f }_{F^{s}_{p,q}(\Omega_1)} {\lesssim_{d,p,q,s}} C_{f} \left( \norm{g}_{F^{s}_{p,q}(\Omega_2)}\norm{\nabla f}_\infty^{s} + \norm{\nabla g}_\infty\norm{f}_{F^{s}_{p,q}(\Omega_1)}\right),$$
where
$$C_{f}=\left(1+\norm{\nabla f^{-1}}_\infty^{\frac dp} \right)\left(1+\norm{\nabla f^{-1}}_\infty^d \norm{\nabla f}_\infty^{d}\right),$$
and
$$\norm{g \circ f }_{F^{s}_{p,q}(\Omega_1)} {\lesssim_{d,p,q,s}} C_{f} \left( \norm{g}_{F^{s}_{p,q}(\Omega_2)}\left(\norm{\nabla f}_\infty^\frac s{s-1}\right)^{s} + \norm{g}_\infty \norm{f}_{F^{s}_{\frac{ps}{s-1},q}(\Omega_1)}^\frac{s}{s-1}\right),$$
where
$$C_{f}=\norm{\nabla f}_\infty^\frac{-s}{s-1}\left(\norm{\nabla f^{-1}}_\infty^{\frac d{ps}}+\norm{\nabla f^{-1}}_\infty^{\frac dp} \right)\left(1+\norm{\nabla f^{-1}}_\infty^d \norm{\nabla f}_\infty^{d}\right).$$

\end{remark}

\begin{proof}[Proof of \rf{eqInvarianceUnderInversionTRIEBELBis}]
Inequality \rf{eqInvarianceUnderInversionTRIEBELBis} is proven by analogous techniques using \rf{eqInverseDerivatives} and \rf{eqDerivativesInverse} which apply by Lemma \ref{lemSobolevAdmissibleSpace}. We claim that it is enough to check that $g_{ij}\in F^{s-1}_{p,q}(\Omega_1)$. Indeed, in case $1<s<2$, then we have that $f^{-1}$ is a bi-Lipschitz change of variables and, therefore, $g_{ij}\circ (f^{-1})\in F^{s-1}_{p,q}(\Omega_2)$ if and only if $g_{ij}\in F^{s-1}_{p,q}(\Omega_1)$.
Otherwise, by Lemma \ref{lemReadyForHolderInequality} we have that  $f\in F^{s-1}_{\frac{p(s-1)}{s-2},q}(\Omega_1)$. Inductively we can assume that $f^{-1}\in F^{s-1}_{\frac{p(s-1)}{s-2},q}(\Omega_1)$, and by \rf{eqInvarianceUnderbiLipschitzTRIEBELBisBis}, if $g_{ij}\in F^{s-1}_{p,q}(\Omega_1)$ then we get $g_{ij}\circ (f^{-1})\in F^{s-1}_{p,q}(\Omega_2)$ and  the claim follows.

Now, we want to prove
\begin{equation}\label{eqQuantifyInversion}
\circled{1}:=\left(\int_{\Omega_2} \left(\int_0^1 \frac{\left( \int_{U_{x,t}} |\nabla^{k-1} g_{ij}(x)-\nabla^{k-1} g_{ij}(y)| dy\right)^q}{t^{(\sigma+d)q}}\frac{dt}{t} \right)^\frac pq dx \right)^\frac1p \leq C_{f}\norm{f}_{\mathbf{F}^{s}_{p,q}(\Omega_1)},
\end{equation}
where $U_{x,t}:=B(x,t)\cap\Omega_2$. Again we use first-order differences, and write $h:=y-x$.  For $|\alpha|=k+1$, we have
\begin{align*}
|\Delta_hD^\alpha g_{ij}(x)|
	& \lesssim \sum_{\beta, \gamma,\mu} \left| \frac{\prod_{\ell=1}^{k-1} D^{\gamma_\ell} f_{\mu_\ell}(x)}{\det(Df)^k(x)} \Delta_h(Df)^{\beta}(x) \right| \\
	& \quad\quad\quad +\left| (Df)^{\beta}(x+h) \prod_{\ell=1}^{k-1} D^{\gamma_\ell} f_{\mu_\ell}(x) \Delta_h \left(\frac{1}{\det(Df)^k}\right)(x)\right|\\
	& \quad\quad\quad+ \left|\frac{(Df)^{\beta}(x+h)}{\det(Df)^k(x+h)} \Delta_h\left(\prod_{\ell=1}^{k-1}  D^{\gamma_\ell} f_{\mu_\ell} (x)\right)\right|
\end{align*}
Now we use some trivial properties of first order differences, together with \rf{eqDifferencesProduct} to get
\begin{align*}
|\Delta_hD^\alpha g_{ij}|
	&\lesssim \sum_{\gamma,\mu}\norm{\nabla f}_\infty^{(d-1)k-1}\norm{\nabla f^{-1}}_\infty^{dk} |\Delta_h(D f)| \prod_{\ell=1}^{k-1} |D^{\gamma_\ell} f_{\mu_\ell}|   \\
	& \quad + \norm{\nabla f}_\infty^{(d-1)k}\norm{\nabla f^{-1}}_\infty^{d(k+1)}| \Delta_h \det(D f) | \prod_{\ell=1}^{k-1} |D^{\gamma_\ell} f_{\mu_\ell}|\\
	& \quad+ \norm{\nabla f}_\infty^{(d-1)k}\norm{\nabla f^{-1}}_\infty^{dk} \sum_{\nu\in\{0,1\}^{k-1}:|\nu|\geq 1} \prod_{r\leq k-1:\nu_r=1} |\Delta_hD^{\gamma_r} f_{\mu_r} |\prod_{e\leq k-1:\nu_e=0} |D^{\gamma_e} f_{\mu_e} |.
\end{align*}
To end, note that $|\Delta_h \det(D f)|\leq c \norm{\nabla f}_\infty^{d-1}|\Delta_h(Df)|$, so
\begin{align*}
|\Delta_hD^\alpha g_{ij}|
	&\lesssim \sum_{\gamma,\mu} \left(\norm{\nabla f}_\infty^{(d-1)k-1}\norm{\nabla f^{-1}}_\infty^{dk} + \norm{\nabla f}_\infty^{(d-1)k+d-1}\norm{\nabla f^{-1}}_\infty^{d(k+1)}\right)|\Delta_h(D f)| \prod_{\ell=1}^{k-1} |\nabla^{|\gamma_\ell|} f|   \\
	& \quad+ \norm{\nabla f}_\infty^{(d-1)k}\norm{\nabla f^{-1}}_\infty^{dk} \sum_{\nu\in\{0,1\}^{k-1}:|\nu|\geq 1} \prod_{r\leq k-1:\nu_r=1} |\Delta_h \nabla^{|\gamma_r|} f |\prod_{e\leq k-1:\nu_e=0} |\nabla^{|\gamma_e|} f |.
\end{align*}

Therefore, we write
\begin{align*}
\circled{1}
	& \lesssim C_f \sum_{\substack{ \gamma\in(\N_0^d)^{k-1}\\ |\gamma_\ell|\geq 1\, \&\,\sum|\gamma_\ell|=2k-2}} \left( \left(1+\norm{\nabla f}_\infty^{d}\norm{\nabla f^{-1}}_\infty^{d} \right) \squared{$2\gamma$}+\norm{\nabla f}_\infty\sum_{\nu\in\{0,1\}^{k-1}:|\nu|\geq 1}\squared{$3\gamma\nu$}\right),
\end{align*}
with $C_f=\norm{\nabla f}_\infty^{(d-1)k-1}\norm{\nabla f^{-1}}_\infty^{dk} $, with  \squared{$2\gamma$} and \squared{$3\gamma\nu$} as defined below.

Regarding the first term,  by H\"older's inequality we have
\begin{align*}
\squared{$2\gamma$} 
	& := \left( \int_{\Omega_2} \left(\int_0^1 \frac{ \left( \int_{U_x^t} |\Delta_h(D f)(x)|  dh\right)^q }{t^{(\sigma+d)q}}\frac{dt}{t} \right)^\frac pq \prod_{\ell=1}^{k-1} |\nabla^{|\gamma_\ell|} f(x)|^p dx \right)^\frac1p\\
	& \lesssim \norm{D f}_{F^\sigma_{p_0,q}(\Omega_1)} \prod_{\ell=1}^{k-1}\norm{\nabla^{|\gamma_\ell|} f}_{L^{p_\ell}(\Omega_1)},
\end{align*}
where $\sum_0^{k-1}\frac1{p_\ell}=\frac1p$ and $U_x^t:=B(x,t)\cap \Omega-x$. In particular choose $p_0=\frac{p(s-1)}{\sigma}$ and $p_\ell=\frac{p(s-1)}{|\gamma_\ell|-1}$. By Lemma \ref{lemReadyForHolderInequality} we get
\begin{align*}
\squared{$2\gamma$} 
	& \lesssim \norm{f}_{F^s_{p,q}(\Omega_1)}^{\frac{\sigma}{s-1}+\sum_\ell\frac{|\gamma_\ell|-1}{s-1}}\norm{\nabla f}_\infty^{\frac{s-1-\sigma}{s-1}+\sum_\ell\frac{s-|\gamma_\ell|}{s-1}} \\
	& =\norm{f}_{F^s_{p,q}(\Omega_1)}^{\frac{\sigma+k-1}{s-1}}\norm{\nabla f}_\infty^{\frac{s-1-\sigma +s(k-1)-(2k-2)}{s-1}} =\norm{f}_{F^s_{p,q}(\Omega_1)}\norm{\nabla f}_\infty^{k-1} 
\end{align*}

On the other hand, by H\"older's inequality again
\begin{align*}
\squared{$3\gamma\nu$} 
	& := \left( \int_{\Omega_2} \left(\int_0^1 \frac{ \left( \int_{U_{x,t}} \prod_{r\leq k-1:\nu_r=1} |\Delta_h \nabla^{|\gamma_r|} f  (x)|dh\right)^q }{t^{(\sigma+d)q}}\frac{dt}{t} \right)^\frac pq \prod_{e\leq k-1:\nu_e=0} |\nabla^{|\gamma_e|} f (x)|^p dx \right)^\frac1p\\
	& \lesssim \prod_r \norm{\nabla^{|\gamma_r|} f}_{F^{\sigma/u_r}_{p_r,q_r}(\Omega_1)} \prod_{e}\norm{\nabla^{|\gamma_\ell|} f}_{L^{p_e}(\Omega_1)},
\end{align*}
where we assume that $\sum_r \frac1{u_r}=\sum_1^{k-1} \frac{p}{p_{j}}=1$, $q_r:=u_r q$, as long as $1\leq u_r\leq \min\{p_r,q_r\} \frac{d+\sigma}{d}$. 

The fact that $u_r\leq q_r$ is clear from the definition of $q_r$. 
Let us write $M=\sum_r(|\gamma_r|-1)$ and define $u_r:= \frac{M}{|\gamma_r| - 1}$, $p_r=\frac{p(s-1)}{|\gamma_r|+\sigma/u_r -1}$ and $p_e=\frac{p(s-1)}{|\gamma_e|-1}$. Note that $\sum \frac1{u_r}=1$ and  $1\leq u_r$ trivially, while the condition $u_r \leq p_r$ is equivalent to $u_r(|\gamma_r|-1)\leq p(s-1) -\sigma$, that is, equivalent to $M\leq p(s-1) -\sigma$. But $M\leq |\gamma|-(k-1)= k-1=s-1-\sigma\leq p(s-1) -\sigma$, and thus it follows that $u_r\leq p_r$.

By Lemma \ref{lemReadyForHolderInequality} we get
\begin{align*}
\squared{$3\gamma\nu$} 
	& \lesssim \norm{f}_{F^s_{p,q}(\Omega_1)}^{\sum_r\frac{|\gamma_r|+\sigma/u_r-1}{s-1}+\sum_e\frac{|\gamma_e|-1}{s-1}}\norm{\nabla f}_\infty^{\sum_r\frac{s-|\gamma_r|-\sigma/u_r}{s-1}+\sum_e\frac{s-|\gamma_e|}{s-1}} \\
	& =\norm{f}_{F^s_{p,q}(\Omega_1)}^{\frac{\sigma+k-1}{s-1}}\norm{\nabla f}_\infty^{\frac{s(k-1)-(2k-2)-\sigma}{s-1}} =\norm{f}_{F^s_{p,q}(\Omega_1)}\norm{\nabla f}_\infty^{k-2} .
\end{align*}
All in all, 
\begin{align*}
\circled{1}
	& \lesssim \norm{\nabla f}_\infty^{dk-2}\norm{\nabla f^{-1}}_\infty^{dk}   \left( \left(1+\norm{\nabla f}_\infty^{d}\norm{\nabla f^{-1}}_\infty^{d} \right) \right)\norm{f}_{F^s_{p,q}(\Omega_1)}.
\end{align*}

\end{proof}


\section{Corkscrew and uniform domains}\label{secUniform}
{Next we provide the necessary background to work with uniform and corkscrew domains. We begin by defining a Whitney covering and then we provide definitions of interior corkscrew domains, chains of cubes and uniform domains. Here, uniform domains are defined in terms of the existence of this chains, which can be understood as discretized \emph{cigar paths} or Harnack chains with a uniform bound on the number of balls of each scale. Finally we define the shadow of a cube, which plays the role of Carleson boxes, and we provide some well-known results relating the Hardy-Littlewood maximal operator to Whitney coverings.

The definitions will be given in terms of dyadic cubes, which is a usual tool in harmonic analysis. We define $\mathcal{D}_0$ as the collection of open cubes with side length $1$ whose vertices have integer coordinates. Then its rescalings $\mathcal{D}_j:=2^{-j}\mathcal{D}_0$ are the cubes of generation $j$, whose side-length is $2^{-j}$, and $\mathcal{D}:=\bigcup_{j\in\Z}\mathcal{D}_j$ is the whole collection of \emph{dyadic cubes}. Note that each generation is formed by disjoint cubes, and the union of all the cubes in a generation is dense in $\R^d$.}

\begin{definition}\label{defWhitney}
Given a domain $\Omega$, we say that a collection of open dyadic cubes $\mathcal{W}$ is a {\rm Whitney covering} of $\Omega$ if they are disjoint, the union of the cubes and their boundaries is $\Omega$, there exists a constant $C_{\mathcal{W}}$ such that 
$$C_\mathcal{W} \ell(Q)\leq \dist(Q, \partial\Omega)\leq 4C_\mathcal{W}\ell(Q),$$
and the family $\{50 Q\}_{Q\in\mathcal{W}}$ has finite superposition {i.e., $\sum_Q \chi_{50 Q}\leq C<\infty$}. Moreover, 
\begin{equation}\label{eqWhitney5}
S\subset 5Q \implies \ell(S)\geq \frac12 \ell(Q) \quad\quad {\text{for every } S,Q\in\mathcal{W}.}
\end{equation}
{We call $Q\in \mathcal{W}$ a \emph{Whitney cube}. We say that two cubes $Q,S\in\mathcal{W}$ are neighbors if $\overline{Q}\cap\overline{S}\neq \emptyset$.}
\end{definition}
The existence of such a covering is granted for any open set different from $\R^d$ and in particular for any domain as long as $C_\mathcal{W}$ is big enough (see \cite[Chapter 1]{SteinPetit} for instance).

\begin{definition}
We say that a domain $\Omega\subset \R^d$ is an \emph{interior (resp. exterior) $(\varepsilon,\delta)$-corkscrew domain} if there is a Whitney covering of $\Omega$ (resp. $\overline{\Omega}^c$) such that given any ball $B(x,r)$ centered at $\partial\Omega$ with $0<r\leq \delta$ there exists a Whitney cube $Q\subset B(x,r)$ such that $\ell(Q)\geq \varepsilon r$, {see Figure \ref{figWhitneyCorkscrew}.}
\end{definition}

\begin{figure}[ht]
 \centering
\begin{subfigure}{0.5 \textwidth}
 \centering{\includegraphics[width=\textwidth]{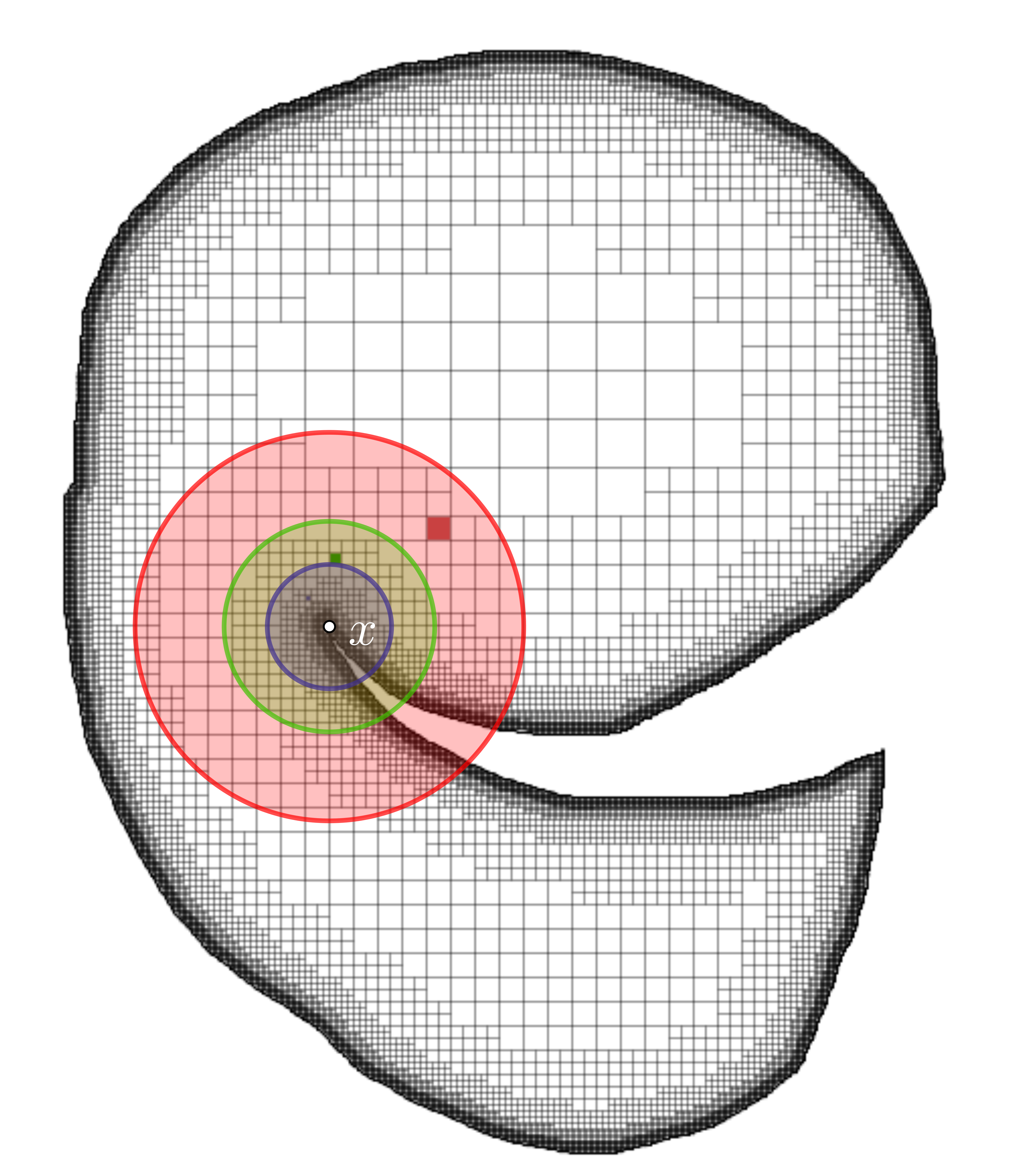}}
\end{subfigure}
\caption{{Interior corkscrew domains may have cusps, in opposition to uniform domains.}}\label{figWhitneyCorkscrew}
\end{figure}

\begin{definition}\label{defEpsilonAdmissible}
Let $\Omega$ be a domain, $\mathcal{W}$ a Whitney decomposition of $\Omega$ and $Q,S\in\mathcal{W}$.  Given $M$ cubes $Q_1,\dots,Q_M\in\mathcal{W}$ with $Q_1=Q$ and $Q_M=S$, the $M$-tuple ${\mathcal{V}}=(Q_1,\dots,Q_M)\in\mathcal{W}^M$  is a {\em chain} connecting $Q$ and $S$ if the cubes $Q_j$ and $Q_{j+1}$ are neighbors for $j<M$. 

Let $\varepsilon\in\R$. We say that the chain ${\mathcal{V}}$ is {\em $\varepsilon$-admissible} if 
\begin{itemize}
\item the \emph{length}  of the chain is bounded by
\begin{equation}\label{eqLengthDistance}
\ell({\mathcal{V}}):=\sum_{j=1}^M\ell(Q_j)\leq \frac1\varepsilon\Dist(Q,S),
\end{equation}
(recall that $\Dist$ stands for the long distance, and $\Dist(Q,S)\approx \ell(Q)+\ell(S)+\dist(Q,S)$)
\item and there exists $j_{\mathcal{V}}<M$ such that the cubes in the chain satisfy
\begin{equation}\label{eqAdmissible1}
\ell(Q_j)\geq\varepsilon \Dist(Q_1,Q_j) \mbox{ for all } j\leq j_{\mathcal{V}} \mbox{\quad\quad  and \quad\quad }
\ell(Q_j)\geq\varepsilon \Dist(Q_j,Q_M) \mbox{ for all } j\geq j_{\mathcal{V}} .
\end{equation}
\end{itemize}
The $j_{\mathcal{V}}$-th cube, which we call \emph{central}, satisfies that $\ell(Q_{j_{\mathcal{V}}})\gtrsim_d \varepsilon \Dist(Q,S)$ by \rf{eqAdmissible1} and the triangle inequality. 

We write (abusing notation) ${\mathcal{V}}$ also for the set $\{Q_j\}_{j=1}^M$. Thus, we will write $P\in {\mathcal{V}}$ if $P$ appears in a coordinate of the $M$-tuple ${\mathcal{V}}$.
\end{definition}

\begin{definition}\label{defUniform}
We say that a domain $\Omega\subset\R^d$ is a {\em uniform domain} if there exists a Whitney covering $\mathcal{W}$ of $\Omega$ and $\varepsilon, \delta \in\R$ such that for any pair of cubes $Q,S \in\mathcal{W}$ with $\Dist(Q,S)\leq \delta$, there exists an $\varepsilon$-admissible chain {connecting $Q$ and $S$ which we denote as} $[Q,S]$, see Figure \ref{figWhitneyChains}. Sometimes we will write {\em $(\varepsilon,\delta)$-uniform domain} to fix the constants. 

{We will write  $Q_S=Q_{j_{[Q,S]}}$ for its central cube.}
\end{definition}
\begin{figure}[ht]
 \centering
\begin{subfigure}{0.45 \textwidth}
 \centering{\includegraphics[width=\textwidth]{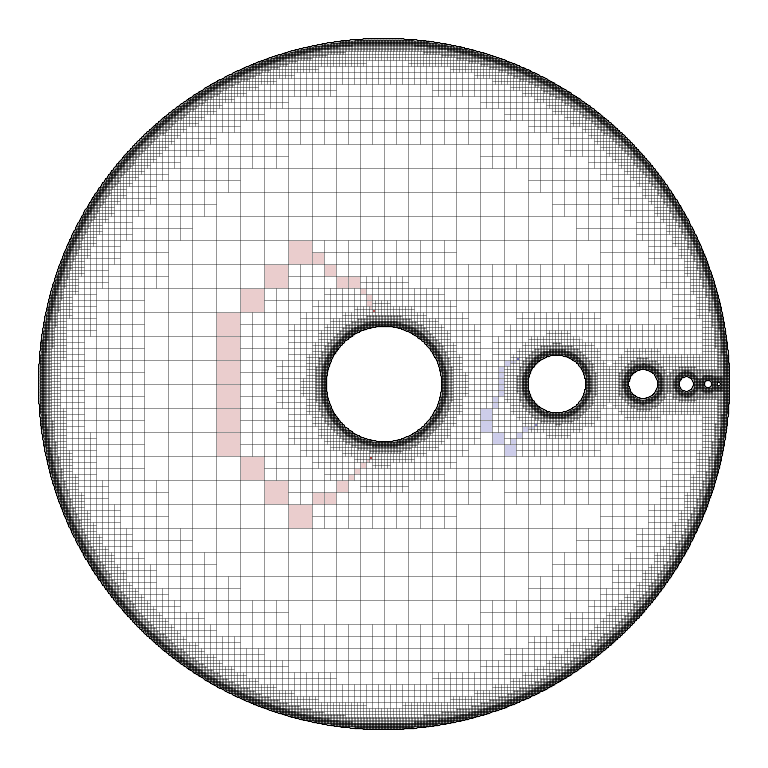}}
\end{subfigure}
\begin{subfigure}{0.45 \textwidth}
 \centering{\includegraphics[width=\textwidth]{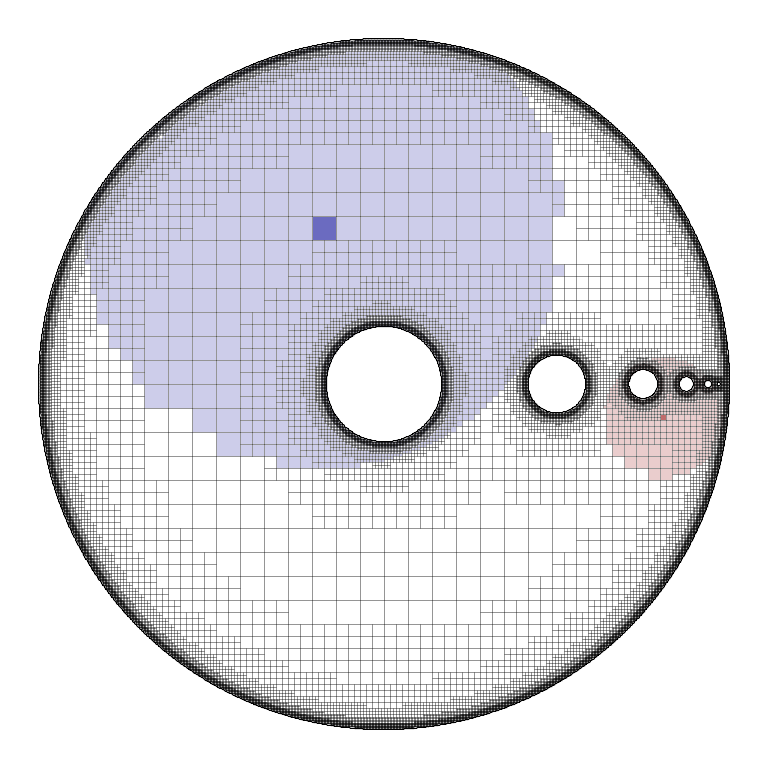}}
\end{subfigure}
\caption{{In uniform domains we can join cubes by admissible chains, which play the role of cigar paths. Shadows are the cubes \emph{under the influence} of a given cube, and they play the role of Carleson boxes.}}\label{figWhitneyChains}
\end{figure}

 {
Consider a uniform domain $\Omega$ with covering $\mathcal{W}$ and two cubes $Q,S\in\mathcal{W}$  with $\Dist(Q,S)\leq \delta$. From Definition \ref{defEpsilonAdmissible} it follows that
\begin{equation}\label{eqAdmissible2}
\Dist(Q,S)\approx_{\varepsilon,d} \ell([Q,S])\approx_{\varepsilon,d} \ell(Q_S).
\end{equation}}

{Note that any bounded Lipschitz domain is an $(\varepsilon, \infty)$-uniform domain, and every uniform domain is also an interior corkscrew domain, perhaps with smaller  parameters. Note also that there may be several admissible chains joining $Q$ and $S$, but we are assuming that a choice is made for every pair of cubes when the domain is uniform, as soon as $Q,S$ are close enough.}
 
Now we can define the shadows:
\begin{definition}\label{defShadow}
Let $\Omega$ be an $(\varepsilon,\delta)$-uniform domain with Whitney covering $\mathcal{W}$. 
Given a cube $P\in\mathcal{W}$ centered at $x_P$ and a real number $\rho$,  the {\em $\rho$-shadow} of $P$ is the collection of cubes
$$\SH_\rho(P)=\{Q\in\mathcal{W}:Q\subset B(x_P,\rho\,\ell(P))\}, $$
and its  {\em ``realization''} is the set
$$\Sh_{\rho}(P)=\bigcup_{Q\in\SH_\rho(P)} Q,$$
{see Figure \ref{figWhitneyChains}.}

By the previous remark and the properties of the Whitney covering, we can define $\rho_\varepsilon>1$ such that the following properties hold:
\begin{itemize}
\item For every $\varepsilon$-admissible chain $[Q,S]$, and every $P\in[Q,Q_S]$ we have that $Q\in\SH_{\rho_\varepsilon}(P)$.
\item Moreover, every cube $P$ belonging to an $\varepsilon$-admissible chain $[Q,S]$ belongs to the shadow $\SH_{\rho_\varepsilon}(Q_S)$.
\end{itemize}
\end{definition}

\begin{remark}[see {\cite[Remark 2.6]{PratsSaksman}}]
\label{remInTheShadow}
Given an $(\varepsilon,\delta)$-uniform domain $\Omega$ we will write $\Sh$ for $\Sh_{\rho_\varepsilon}$. We will write also $\SH$ for $\SH_{\rho_{\varepsilon}}$.

For $Q\in\mathcal{W}$ and $s>0$,  we have that  
\begin{equation}\label{eqAscendingToGlory}
 \sum_{L: Q\in \SH(L)}\ell(L)^{-s} \lesssim \ell(Q)^{-s} \quad\quad\mbox{ and }\quad\quad  \sum_{\substack{L: Q\in \SH(L)\\\ell(L)\leq \rho}}\ell(L)^{s} \lesssim \rho^{s}
 \end{equation}
and, moreover, if $Q\in\SH(P)$ and $\Dist(Q,P)\leq \delta$, then
\begin{equation}\label{eqAscendingPath}
 \sum_{L\in[Q,P]}\ell(L)^{s} \lesssim \ell(P)^s \mbox{\quad\quad and \quad\quad}  \sum_{L\in[Q,P]}\ell(L)^{-s}\lesssim \ell(Q)^{-s} .
 \end{equation}
\end{remark}
Note that the property \rf{eqAscendingToGlory} is not a consequence of uniformity, but of the definition of shadow.

We recall the definition of the non-centered Hardy-Littlewood maximal operator. Given $f\in L^1_{loc}(\R^d)$ and $x\in\R^d$, we define $Mf(x)$ as the supremum of the mean of $f$ in cubes containing $x$, that is,
$$Mf(x)=\sup_{Q:  x\in Q} \frac{1}{|Q|} \int_Q f(y) \, dy.$$
It is a well known fact that this operator is bounded in $L^p$ for $1<p<\infty$.
The following lemma is proven in \cite{PratsTolsa} and will be used repeatedly along the proofs contained in the present text.

\begin{lemma}\label{lemMaximal}
Let $\Omega$ be a  domain with Whitney covering $\mathcal{W}$. Assume that $g\in L^1(\Omega)$ and $r>0$. For every $\eta>0$, $Q\in\mathcal{W}$ and $x\in \R^d$, we have
\begin{enumerate}[1)]
\item The non-local {inequalities} for the maximal operator
	\begin{equation}\label{eqMaximalFar}
	 \int_{|y-x|>r} \frac{g(y) \, dy}{|y-x|^{d+\eta}}\lesssim_d \frac{Mg(x)}{r ^\eta}
\mbox{\quad\quad and \quad\quad}
	 \sum_{S:\Dist(Q,S)>r}  \frac{\int_S g(y) \, dy}{\Dist(Q,S)^{d+\eta}}\lesssim_d \frac{\inf_{y\in Q} Mg(y)}{r ^\eta}.
	 \end{equation}
\item The local {inequalities} for the maximal operator
	\begin{equation}\label{eqMaximalClose}
	 \int_{|y-x|<r} \frac{g(y) \, dy}{|y-x|^{d-\eta}}\lesssim_d r ^\eta Mg(x)
\mbox{\quad\quad and \quad\quad}
	\sum_{S:\Dist(Q,S)<r}  \frac{\int_S g(y) \, dy}{\Dist(Q,S)^{d-\eta}}\lesssim_d \inf_{y\in Q} Mg(y) \,r^\eta.
	 \end{equation}
\item In particular, if $\Omega$ is a uniform domain, we have
	\begin{equation}\label{eqMaximalAllOver}
		\sum_{S\in\mathcal{W}} \frac{\ell(S)^d}{\Dist(Q,S)^{d+\eta}} \lesssim_d \frac{1}{\ell(Q)^\eta}
\mbox{\quad\quad and \quad\quad}
		\sum_{S\in \SH_{{\rho}}(Q)} \ell(S)^{d} \lesssim_{d,\rho} \ell(Q)^d
	\end{equation}
and, by Definition \ref{defShadow},
	\begin{equation}\label{eqMaximalGuay}
	\sum_{S\in\SH_{{\rho}} (Q)} \int_S g(x) \, dx\lesssim_{d,\rho} \inf_{y\in Q} Mg(y) \, \ell(Q)^d.
	 \end{equation}
\end{enumerate}
\end{lemma}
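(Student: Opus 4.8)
The plan is to obtain every inequality from a single elementary scheme: dyadically split the region of integration, respectively summation, into annular layers on which the kernel $|y-x|^{-(d\pm\eta)}$ (resp.\ $\Dist(Q,S)^{-(d\pm\eta)}$) is comparable to a constant, estimate the mass of $g$ on each layer by the trivial bound $\int_{B(z,\rho)}g\lesssim_d\rho^d\,Mg(z)$ (a ball is contained in a cube of comparable side-length centered at $z$), and then sum the resulting geometric series, which converges precisely because $\eta>0$.

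For the continuous estimates, to prove the first inequality in \rf{eqMaximalFar} I would write $\{|y-x|>r\}=\bigcup_{j\geq0}A_j$ with $A_j=\{2^jr\leq|y-x|<2^{j+1}r\}$; since the kernel is $\approx(2^jr)^{-(d+\eta)}$ on $A_j$ and $\int_{A_j}g\leq\int_{B(x,2^{j+1}r)}g\lesssim_d(2^{j+1}r)^d\,Mg(x)$, the $j$-th term is $\lesssim_d2^{-j\eta}r^{-\eta}\,Mg(x)$ and $\sum_{j\geq0}2^{-j\eta}\lesssim_d1$. The first inequality in \rf{eqMaximalClose} is the same argument run inward, over the annuli $A_k=\{2^{-k-1}r\leq|y-x|<2^{-k}r\}$, $k\geq0$: the kernel is $\approx(2^{-k}r)^{\eta-d}$, the mass on $A_k$ is $\lesssim_d(2^{-k}r)^d\,Mg(x)$, so the $k$-th term is $\lesssim_d(2^{-k}r)^{\eta}\,Mg(x)$, and again $\sum_{k\geq0}2^{-k\eta}\lesssim_d1$.

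For the discrete versions fix an arbitrary $y\in Q$. The point to exploit is that $\Dist$ satisfies $\Dist(Q,S)\gtrsim_d\ell(Q)$ while $\dist(Q,S)\leq\Dist(Q,S)$, so every Whitney cube $S$ with $\Dist(Q,S)<\lambda$ lies in the Euclidean ball $B(y,C_d\lambda)$, because $\dist(S,y)\leq\dist(S,Q)+\diam Q\lesssim_d\lambda+\ell(Q)\lesssim_d\lambda$. Grouping the cubes $S$ by the dyadic size of $\Dist(Q,S)$ exactly as before, and using that the Whitney cubes are pairwise disjoint, the total mass $\sum_S\int_Sg$ over one dyadic block of scale $2^jr$ is $\leq\int_{B(y,C_d2^{j+1}r)}g\lesssim_d(2^{j+1}r)^d\,Mg(y)$, after which the summations are identical to the continuous case; taking $\inf_{y\in Q}$ gives the right-hand sides of the second inequalities in \rf{eqMaximalFar} and \rf{eqMaximalClose}.

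Finally, the estimates \rf{eqMaximalAllOver}--\rf{eqMaximalGuay} for a uniform domain follow at once: the first bound in \rf{eqMaximalAllOver} is the same dyadic sum with $g=\mathbf{1}_\Omega$, which automatically starts from the scale $\ell(Q)$ because $\Dist(Q,S)\geq\ell(Q)$, so $\sum_{S}\ell(S)^d$ over a block of scale $2^j\ell(Q)$ is $\leq|B(x_Q,C_d2^{j+1}\ell(Q))|\lesssim_d(2^j\ell(Q))^d$ and the series sums to $\lesssim_d\ell(Q)^{-\eta}$; the other two use Definition \ref{defShadow}, which places every $S\in\SH_\rho(Q)$ inside $B(x_Q,\rho\,\ell(Q))$, so disjointness gives $\sum_{S\in\SH_\rho(Q)}\ell(S)^d=\sum|S|\leq|B(x_Q,\rho\ell(Q))|\lesssim_{d,\rho}\ell(Q)^d$, and comparing that ball with a cube of comparable size centered at a point $y\in Q$ yields $\sum_{S\in\SH_\rho(Q)}\int_Sg\leq\int_{B(y,C_{d,\rho}\ell(Q))}g\lesssim_{d,\rho}\ell(Q)^d\,Mg(y)$, i.e.\ \rf{eqMaximalGuay}. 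There is no genuine obstacle in this lemma; the one point that must be handled with care is that each discrete sum has to be compared with an honest Euclidean ball in order to invoke the Hardy--Littlewood maximal function, which is precisely where the structural inequality $\Dist(Q,S)\gtrsim_d\ell(Q)$ and the bounded overlap of the Whitney family are used, and it is also what prevents the dyadic series over the cubes $S$ close to $Q$ from descending below the scale $\ell(Q)$.
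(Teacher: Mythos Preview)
Your argument is correct and is the standard dyadic--annuli proof of these estimates. The paper does not actually prove this lemma: it only cites \cite{PratsTolsa} and uses the result as a black box, so there is nothing to compare your approach against here.
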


\section{Extension operators}\label{secExtension}
{In this section we prove Theorem \ref{theoDifference}. First we reduce  Theorem \ref{theoDifference} to finding an extension operator. Then, in Section \ref{secCorkscrew} we define this extension operator for smoothness parameter below one in corkscrew domains and then we show that this operator is indeed an extension operator, establishing Theorem \ref{theoDifference} for $s<1$. As a matter of fact we obtain a much better result than in the statement of the theorem, since we can substitute the hypothesis of the domain being uniform by just being interior corkscrew. Finally, in Section \ref{secUniformSmoothness} we establish the remaining cases.

The proof of Theorem \ref{theoExtensionOperator0}, that is, the extension operator when $s<1$, follows a similar structure to \cite[Theorem 1.4]{PratsSaksman}. The novelty here is that we use a norm with means on balls, that allows us to drop the restriction $\sigma>\frac dp-\frac dq$, so we get results with full generality. The use of means in balls limiting the radius to a constant depending on the corkscrew character allows us to avoid the error terms $\circled{a2}$, $\circled{b2}$ and $\circled{c4}$ from \cite{PratsSaksman}. In this way we relate the extended function to the homogeneous norm in all terms except for $\circled{c3}$.

The proof of Theorem \ref{theoExtension}, that is, the boundedness of the  extension operator when $s>1$, follows a similar structure to \cite[Theorem 1.5]{PratsTL}. Again the main difference is the usage of means in balls allowing us to drop the restriction $\sigma>\frac dp-\frac dq$.  We use a similar notation to \cite{PratsTL} so that it is easy for the reader to spot the differences. Here $\squared{0}$ corresponds to $\squared{$\beta$}$ in the aforementioned paper. Using means on balls allows us to drop the error terms $\squared{$\beta.a.2$}$, $\squared{$\beta.b.2$}$ and $\squared{$\beta.c.3$}$ in \cite{PratsTL}. In the present paper, we need to distinguish the local part $\squared{c.1}$ from the non-local part $\squared{c.2}$. Finally, the non-homogeneous contribution is obtained from $\squared{c.3}$, which corresponds to $\squared{$\beta.c.2$}$ in \cite{PratsTL}. Following also the scheme in that paper, we defer the main estimates to a couple of technical lemmata. Lemma \ref{lemControlTotal} is the natural counterpart to \cite[Lemma 3.4]{PratsTL}, and Lemma \ref{lemNormA} is the counterpart to \cite[Lemma 3.5]{PratsTL}. The proof of the first lemma is much easier this time, thanks to a clever reduction to Lemma \ref{lemNormA}. The latter, however, deals with the main technical difficulty in this setting. }

\begin{definition}\label{defAspqU}
Consider $1\leq p<\infty$, $1\leq q \leq \infty$, $1\leq u \leq \infty$ and $0<\sigma<1$ so that $\sigma> \frac{d}{\min\{p,q\}}-\frac{d}{u}$. Let $U$ be an open set in $\R^d$. We say that a locally integrable function $f\in F^{\sigma,\rho}_{p,q,u}(U)$ if
\begin{itemize}
\item The function $f\in L^p(U)$, and
\item the seminorm
\begin{equation}\label{eqSeminormAspq}
\norm{f}_{\dot{F}^{\sigma,\rho}_{p,q,u}(U)}:=\left(\int_U\left(\int_0^\rho \frac{\left(\int_{U_{x,t}} |f(x)-f(y)|^u\right)^\frac qu}{t^{\sigma q+\frac{dq}{u}}} \,\frac{dt}{t} \right)^{\frac{p}{q}}dx\right)^{\frac{1}{p}}
\end{equation}
is finite, where we denote $U_{x,t}:=B(x,t)\cap U$. 
\end{itemize}
We define the norm 
\begin{equation*}
\norm{f}_{{F}^{\sigma,\rho}_{p,q,u}(U)}:=\norm{f}_{L^p(U)}+\norm{f}_{\dot{F}^{\sigma,\rho}_{p,q,u}(U)}.
\end{equation*}
For $s=k+\sigma$ with $k\in\N$, we write
\begin{equation*}
\norm{f}_{{F}^{s,\rho}_{p,q,u}(U)}:=\norm{f}_{W^{k,p}(U)}+\norm{\nabla^k f}_{\dot{F}^{\sigma,\rho}_{p,q,u}(U)}.
\end{equation*}
\end{definition}

In order to prove that $F^s_{p,q}(\Omega)=F^{s,1}_{p,q,u}(\Omega)$ for a given domain $\Omega$, it suffices to find an extension operator $\mathcal{E}:F^{s,\rho}_{p,q,1}(\Omega)\to F^{s,1}_{p,q,1}(\R^d)$ with $\rho<1$. Once this is established, using the equivalence of norms in the ambient space (see \cite[Theorem 1.116]{TriebelTheoryIII}) we obtain the equivalence of norms in the domain by classical arguments:

First note that 
\begin{equation}\label{eqCompareNormsRadius}
\norm{g}_{F^{s}_{p,q}(\R^d)}\approx\norm{g}_{F^{s,1}_{p,q,u}(\R^d)}\approx\norm{g}_{F^{s,\rho}_{p,q,u}(\R^d)}.
\end{equation}
{The} first comparison comes from \cite[Theorem 1.116]{TriebelTheoryIII}. The second can be obtained easily by using the change of variables $\widetilde{x}=\rho^{-1}x$, $\widetilde{t}=\rho^{-1}t$, $\widetilde{y}=\rho^{-1}y$ in the last norm and then compare the norms of $g$ and its rescaling $g(\rho\cdot)$  in $F^s_{p,q}$. 

Thus, 
$$\inf_{g|_\Omega \equiv f} \norm{g}_{F^s_{p,q}(\R^d)}\leq \norm{\mathcal{E}f}_{F^s_{p,q}(\R^d)} \approx \norm{\mathcal{E}f}_{F^{s,1}_{p,q,1}(\R^d)}\lesssim  \norm{f}_{F^{s,\rho}_{p,q,1}(\Omega)}\lesssim  \norm{f}_{F^{s,\rho}_{p,q,u}(\Omega)}\leq \inf_{g|_\Omega \equiv f}\norm{g}_{F^{s,\rho}_{p,q,u}(\R^d)}.$$
Since the first and the last are {comparable} it follows that all the quantities are comparable and, in particular, 
$$F^s_{p,q}(\Omega)=F^{s,\rho}_{p,q,u}(\Omega).$$

To end, since $\rho<1$ then 
$$\norm{f}_{F^{s,\rho}_{p,q,u}(\Omega)}\leq \norm{f}_{F^{s,1}_{p,q,u}(\Omega)}\leq  \norm{\mathcal{E} f}_{F^{s,1}_{p,q,u}(\R^d)}\lesssim \norm{f}_{F^{s,\rho}_{p,q,u}(\Omega)}.$$

\subsection{Corkscrew domains and smoothness below one}\label{secCorkscrew}
Let $\Omega$ be an interior corkscrew domain {and consider given Whitney coverings $\mathcal{W}_0$ of $\Omega$ and $\mathcal{W}_2$ of $\overline{\Omega}^c$. To define the extension operator we will take averages on certain `reflected' cubes, but we need to avoid large cubes, say with side-length bounded by a constant $\ell_0$ which depends on the corkscrew constants. For this reason we  define some related collections of cubes as follows:
\begin{itemize}
\item In case $\Omega$ is unbounded and $\delta=\infty$, choose $\ell_0=1$. Otherwise, fix $\ell_0$ depending on the corkscrew parameters of the domain, so that $c_0\ell_0 << \delta \wedge 1$. Note that $c_0$ is a constant which will be fixed during the proof of Theorem \ref{theoExtension} below, right before \rf{eqBetaA1Bounded}.
\item We define $\mathcal{W}_3$ to be the collection of cubes in $\mathcal{W}_2$ with side-lengths smaller than $10 \ell_0$.
\item We define $\mathcal{W}_1$ to be the collection of cubes in $\mathcal{W}_0$ with side-length smaller than $c_0\ell_0$.
\item Choose $\ell_0$ so that for any $Q\in \mathcal{W}_3$ there is a $S\in \mathcal{W}_1$ with 
\begin{equation}\label{eqProperties}
\Dist(Q,S)\leq C \ell(Q)\quad\text{ and }\quad \ell(Q)=\ell(S)\quad \text{(see \cite[Lemma 2.4]{Jones})}. 
\end{equation}
\end{itemize}
For $Q\in \mathcal{W}_3$ we define the symmetrized cube $Q^*\in\mathcal{W}_1$ as one of the cubes satisfying \rf{eqProperties}. Note that the number of possible choices for $Q^*$ is uniformly bounded.
}

\begin{lemma}\label{lemSymmetrized}[see \cite[Section 2]{Jones}]
Let $\Omega$ be an interior corkscrew domain. For cubes $Q_1,Q_2\in\mathcal{W}_3$ and $S\in\mathcal{W}_1$ we have that
\begin{itemize}
\item The symmetrized cubes have finite overlapping: there exists a constant $C$ depending on the parameter $\varepsilon$ and the dimension $d$ such that $\#\{Q\in\mathcal{W}_3: Q^*=S\}\leq C$.
\item The long distance is invariant in the following sense:
\begin{equation}\label{eqLongDistanceInvariant}
\Dist(Q_1^*,Q_2^*)\approx \Dist(Q_1,Q_2) \mbox{\quad\quad and \quad\quad}\Dist(Q_1^*,S)\approx \Dist(Q_1,S) 
\end{equation}
\end{itemize}
\end{lemma}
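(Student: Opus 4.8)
The plan is to derive everything from two elementary inputs: the defining properties of the symmetrized cube coming from Jones' construction, and careful bookkeeping with the triangle inequality. Recall from \cite[Lemma 2.4]{Jones} (reproduced in the paragraph preceding the statement) that for every $Q\in\mathcal{W}_3$ the cube $Q^*\in\mathcal{W}_1$ is chosen with $\ell(Q^*)=\ell(Q)$ and $\dist(Q,Q^*)\lesssim_{\varepsilon,d}\ell(Q)$, the implicit constant depending only on the corkscrew parameter $\varepsilon$ and on $d$; since the number of admissible choices is uniformly bounded, this holds for the particular $Q^*$ that was fixed. I will also use the standard fact that the long distance $\Dist(Q,S)$ is comparable to $\ell(Q)+\ell(S)+\dist(Q,S)$.

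For the finite overlapping I would fix $S\in\mathcal{W}_1$ and observe that any $Q\in\mathcal{W}_3$ with $Q^*=S$ has side-length exactly $\ell(S)$ and, because $\dist(Q,S)\lesssim_{\varepsilon,d}\ell(S)$, is contained in a fixed dilate $B(x_S,C'_{\varepsilon,d}\ell(S))$ of $S$, where $x_S$ is the center of $S$. Since the cubes of $\mathcal{W}_3\subset\mathcal{W}_2$ are pairwise disjoint and each of these cubes has volume $\ell(S)^d$, a volume count inside that ball bounds their number by a constant depending only on $\varepsilon$ and $d$. This is the easy half.

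For the invariance of the long distance I would use $\ell(Q_i^*)=\ell(Q_i)$ together with the comparison $\Dist(\cdot,\cdot)\approx\ell+\ell+\dist$ to reduce the claim $\Dist(Q_1^*,Q_2^*)\approx\Dist(Q_1,Q_2)$ to comparing $\dist(Q_1^*,Q_2^*)$ with $\ell(Q_1)+\ell(Q_2)+\dist(Q_1,Q_2)$ in both directions. One direction is the chain of triangle inequalities
$$\dist(Q_1^*,Q_2^*)\leq \dist(Q_1^*,Q_1)+\diam Q_1+\dist(Q_1,Q_2)+\diam Q_2+\dist(Q_2,Q_2^*),$$
after bounding $\dist(Q_i^*,Q_i)$ and $\diam Q_i$ by $C_{\varepsilon,d}\ell(Q_i)$; the reverse inequality is the symmetric estimate, inserting $Q_1^*$ and $Q_2^*$ instead. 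The mixed statement $\Dist(Q_1^*,S)\approx\Dist(Q_1,S)$ for $S\in\mathcal{W}_1$ follows from exactly the same computation with $S$ playing the role of $Q_2=Q_2^*$, so that no symmetrization of the second cube is needed.

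I do not anticipate a genuine obstacle here; the argument is purely metric. The only points requiring attention are that the implicit constant in $\dist(Q,Q^*)\lesssim\ell(Q)$ is genuinely uniform over $\mathcal{W}_3$ (this is what Jones' construction gives, together with the uniformly bounded number of choices of $Q^*$), and that in the triangle-inequality chains one keeps straight which cube is being symmetrized so that a single constant $C_{\varepsilon,d}$ can be used throughout.
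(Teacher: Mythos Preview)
Your argument is correct and is the standard one. Note that the paper does not give its own proof of this lemma: it simply records the statement with the citation ``[see \cite{Jones}]'' and uses it as a black box. Your write-up supplies exactly the elementary volume count and triangle-inequality bookkeeping that underlie Jones' original lemma, so there is nothing to compare beyond saying that you have filled in what the paper leaves to the reference.
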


We define the family of bump functions $\{\psi_Q\}_{Q\in \mathcal{W}_2}$ to be a partition of the unity associated to $\left\{\frac{11}{10}Q\right\}_{Q\in\mathcal{W}_2}$, that is, their sum $\sum\psi_Q\equiv 1$, they satisfy the pointwise inequalities $0\leq \psi_Q\leq \chi_{\frac{11}{10}Q}$ and $\norm{\nabla\psi_Q}_\infty \lesssim \frac{1}{\ell(Q)}$.  We can define the operator
$$\Lambda_0 f(x)= f(x)\chi_\Omega(x) + \sum_{Q\in\mathcal{W}_3} \psi_Q(x) f_{Q^*} \mbox{ for any }f\in L^1_{loc}(\Omega)$$
 (here $f_{U}$ stands for the mean of a function $f$ in a set $U$). This function is defined almost everywhere because the boundary of the domain $\Omega$ has zero Lebesgue measure (see \cite[Lemma 2.3]{Jones}).

\begin{theorem}\label{theoExtensionOperator0}
Let $\Omega\subset\R^d$ be an interior $(\varepsilon,\delta)$-corkscrew domain, let $1\leq p<\infty$, $1\leq q\leq \infty$ and $0<s<1$. Then, $\Lambda_0: F^{s,C\ell_0}_{p,q,1}(\Omega)\to F^{s,1}_{p,q,1}(\R^d)$, with $C$ depending only on $d$ and $\varepsilon$ while $\ell_0$ depends also on $\delta$.
\end{theorem}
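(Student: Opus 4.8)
The plan is to split the target seminorm of $\Lambda_0 f$ on $\R^d$ into three pieces according to the location of the base point $x$: (i) both $x$ and the averaging ball $B(x,t)$ stay inside $\Omega$ (or at least a controlled dilate of it); (ii) $x\in\Omega$ but $B(x,t)$ protrudes outside; and (iii) $x\in\overline{\Omega}^c$. Region (i) is essentially the interior estimate and is bounded by $\norm{f}_{F^{s,C\ell_0}_{p,q,1}(\Omega)}$ directly, since for small $t$ the ball $B(x,t)\cap\R^d$ differs from $B(x,t)\cap\Omega$ only when $\dist(x,\partial\Omega)\lesssim t$, and in that regime one also has to track the bump-function tail — so really (i) should be "$\dist(x,\partial\Omega)\gtrsim t$" and the rest folded into (ii). The main work is to show that for $x$ near or outside the boundary, the first-order difference $\Delta_h\Lambda_0 f(x)$ — where $h=y-x$ and $|h|<t$ — is controlled, after integration, by the interior seminorm of $f$.

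First I would record the basic pointwise structure: for $x\in\overline{\Omega}^c$, $\Lambda_0 f(x)=\sum_{Q\in\mathcal{W}_3}\psi_Q(x)f_{Q^*}$, and since $\sum_Q\psi_Q\equiv 1$ on $\overline{\Omega}^c$ near the boundary we can write, for two nearby points $x,y$ in the exterior,
$$\Lambda_0 f(x)-\Lambda_0 f(y)=\sum_{Q}\bigl(\psi_Q(x)-\psi_Q(y)\bigr)\bigl(f_{Q^*}-f_{R^*}\bigr),$$
where $R$ is any fixed reference cube near $x$; the point is that $\psi_Q(x)-\psi_Q(y)$ is nonzero only for boundedly many $Q$, is bounded by $|h|/\ell(Q)$, and $\ell(Q)\approx\dist(x,\partial\Omega)$ for those $Q$. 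The difference of averages $|f_{Q^*}-f_{R^*}|$ is then estimated by telescoping along a Whitney chain connecting $Q^*$ and $R^*$ inside $\Omega$ (using that $\Omega$ is an interior corkscrew domain so that neighbouring symmetrized cubes are genuinely close by Lemma \ref{lemSymmetrized}), and each link $|f_{P}-f_{P'}|$ is dominated by $\ell(P)^{-d}\int_{P\cup P'}|f(z)-f_{P}|\,dz$, which in turn is bounded by an average over $P$ of $\int_{B(z,c\ell(P))\cap\Omega}|f(z)-f(w)|\,dw / \ell(P)^{d}$. This is exactly the kind of quantity that appears inside $\norm{f}_{\dot F^{\sigma,C\ell_0}_{p,q,1}(\Omega)}$ at scale $t\approx\ell(P)$, so after raising to the power $q$, integrating $dt/t$, raising to $p/q$ and integrating $dx$ over the exterior, the sum over chains collapses by the Whitney-chain summation bounds \rf{eqAscendingPath}–\rf{eqAscendingToGlory} and the maximal function estimates of Lemma \ref{lemMaximal}, specifically \rf{eqMaximalGuay}, to give $\lesssim\norm{f}_{F^{s,C\ell_0}_{p,q,1}(\Omega)}^p$.

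For region (ii), with $x\in\Omega$ but $t\gtrsim\dist(x,\partial\Omega)$, I would split $\Delta_h\Lambda_0 f(x)$ into $(f(x)-f_{Q_x})+(f_{Q_x}-\Lambda_0 f(y))$ where $Q_x\in\mathcal{W}_1$ is the Whitney cube containing $x$; the first term is an interior difference handled as in region (i), and the second reduces to the exterior-type estimate already obtained, plus the case $y\in\Omega$ which is again interior. One subtlety is that the radius of the averaging parameter on the domain side is only $C\ell_0$, not $1$; but differences $\Delta_h\Lambda_0 f(x)$ with $t$ between $C\ell_0$ and $1$ are controlled crudely by $\norm{\Lambda_0 f}_{L^p}\lesssim\norm{f}_{L^p(\Omega)}$ together with the fact that $t^{-\sigma q-dq}$ integrated over $[C\ell_0,1]$ contributes only a harmless constant, so these large-scale contributions are absorbed into the $L^p$ part of the norm. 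I expect the main obstacle to be the careful bookkeeping in region (ii)/(iii) when $x$ and $y$ lie on opposite sides of $\partial\Omega$: one must choose the intermediate Whitney cube and the connecting chain so that its total length is comparable to $|h|$ (using the interior corkscrew condition, not full uniformity, since $s<1$ permits this weaker hypothesis), and then verify that the resulting double sum over $(Q,\text{chain})$ is summable uniformly in $x$ — this is where \rf{eqAscendingToGlory} and the finite-overlap property of the symmetrized cubes from Lemma \ref{lemSymmetrized} do the decisive work. The reasoning throughout parallels \cite{PratsSaksman}, the only genuinely new point being that first-order differences (rather than derivatives) of $\Lambda_0 f$ suffice and behave well precisely because $s<1$.
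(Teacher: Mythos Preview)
Your plan has a genuine gap at exactly the point you flag as ``the main obstacle''. You propose to control $|f_{Q^*}-f_{R^*}|$ by \emph{telescoping along a Whitney chain} joining $Q^*$ and $R^*$ inside $\Omega$, and you invoke \rf{eqAscendingPath}--\rf{eqAscendingToGlory}. But \rf{eqAscendingPath} and the very existence of $\varepsilon$-admissible chains come from Definition~\ref{defUniform}: they require $\Omega$ to be a \emph{uniform} domain. The hypothesis of Theorem~\ref{theoExtensionOperator0} is only the interior corkscrew condition, and in a corkscrew domain that is not uniform (e.g.\ a slit disk) two Whitney cubes can be close in $\R^d$ yet admit no short chain inside $\Omega$. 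Your hedging clause ``using the interior corkscrew condition, not full uniformity, since $s<1$ permits this weaker hypothesis'' is precisely the unjustified step: the corkscrew condition supplies nearby big cubes, not chains.

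The paper's proof avoids chains altogether, and this is the whole point (see Remark~\ref{remProofExtension}). Whenever a difference of averages $|f_{P^*}-f_{S^*}|$ appears, it is bounded \emph{directly} by
\[
|f_{P^*}-f_{S^*}|\le \frac{1}{|P^*|\,|S^*|}\int_{P^*}\int_{S^*}|f(\xi)-f(\zeta)|\,d\zeta\,d\xi,
\]
and since $P^*,S^*\subset\Omega$ with $\dist(P^*,S^*)\lesssim t$ (by Lemma~\ref{lemSymmetrized}), this double integral is already absorbed by $\norm{f}_{\dot F^{s,C\ell_0}_{p,q,1}(\Omega)}$. No path inside $\Omega$ is needed because the seminorm itself is completely nonlocal for $0<s<1$: the first-order difference $|f(\xi)-f(\zeta)|$ between arbitrary points of $\Omega$ at distance $\lesssim t$ is exactly what the norm measures. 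Your decomposition by the location of $x$ and the size of $t$ is otherwise reasonable and close in spirit to the paper's split into $\circled{a},\circled{b},\circled{c}$, but you should replace every chain/telescoping argument by this direct double-integral bound; once you do, the references to \rf{eqAscendingPath} and to Lemma~\ref{lemMaximal} become unnecessary for this theorem.
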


\begin{proof}
In light of \rf{eqCompareNormsRadius}, it is enough to check $\Lambda_0: F^{s,C\ell_0}_{p,q,1}(\Omega)\to F^{s,\ell_0}_{p,q,1}(\R^d)$ for $\ell_0$ small enough, that is
\begin{align*}
\norm{\Lambda_0 f}_{{F}^{s,\ell_0}_{p,q,1}(\R^d)}
	&=\norm{\Lambda_0 f}_{L^p}+ \norm{\norm{\frac{ \norm{\Lambda_0 f(x)-\Lambda_0 f(y)}_{L^1_y (B_{x,t})}}{t^{s+{d}+\frac1q}}}_{L^q_t(0,\ell_0)}}_{L^p_x(\R^d)} \lesssim \norm{f}_{F^{s,C\ell_0}_{p,q,1}(\Omega)},
\end{align*}
where $B_{x,t}:=B(x,t)$.

First, note that $\norm{\Lambda_0f}_{L^p}\leq \norm{f}_{L^p(\Omega)}+\norm{\Lambda_0f}_{L^p(\Omega^c)}$. By Jensen's inequality, we have that 
$$\norm{\Lambda_0f}_{L^p(\Omega^c)}^p\lesssim_p \sum_{Q\in\mathcal{W}_3} |f_{Q^*}|^p\norm{\psi_Q}_{L^p}^p \leq \sum_{Q\in\mathcal{W}_3} \frac{1}{\ell(Q)^d}\norm{f}_{L^p(Q^*)}^p  \left(\frac{11}{10}\ell(Q)\right)^d . $$
By the finite overlapping of the symmetrized cubes, 
\begin{equation}\label{eqExtensionInLp}
\norm{\Lambda_0f}_{L^p(\Omega^c)}^p\lesssim \norm{f}_{L^p(\Omega)}^p. 
\end{equation}

 It remains to check that
\begin{equation*}
\norm{\Lambda_0 f}_{{\dot F}^{s,{\ell_0}}_{p,q,1}(\R^d)}= \norm{\norm{\frac{ \norm{\Lambda_0 f(x)-\Lambda_0 f(y)}_{L^1_y (B_{x,t})}}{t^{s+{d}+\frac1q}}}_{L^q_t(0,\ell_0)}}_{L^p_x(\R^d)}\lesssim \norm{f}_{F^{s,C{\ell_0}}_{p,q,1}(\Omega)}.
\end{equation*}
{By a decomposition of the integration domains  $x,y\in \Omega \cup \Omega^c$, we can reduce the preceding estimate to proving that }
\begin{align*}
\circled{a}+\circled{b}+\circled{c} \lesssim \norm{f}_{F^{s,C\ell_0}_{p,q,1}(\Omega)},
\end{align*}
where
\begin{align*}
\circled{a} 
	& := \norm{\norm{\frac{ \norm{f(x)-\Lambda_0 f(y)}_{L^1_y (\Omega^c_{x,t})}}{t^{s+{d}+\frac1q}}}_{L^q_t(0,\ell_0)}}_{L^p_x(\Omega)}	,
\end{align*}
\begin{align*}
\circled{b} 
	& := \norm{\norm{\frac{ \norm{\Lambda_0f(x)- f(y)}_{L^1_y (\Omega_{x,t})}}{t^{s+{d}+\frac1q}}}_{L^q_t(0,\ell_0)}}_{L^p_x(\Omega^c)}
	\end{align*}
	and
\begin{align*}
\circled{c}
	& := \norm{\norm{\frac{ \norm{\Lambda_0f(x)- \Lambda_0f(y)}_{L^1_y (\Omega^c_{x,t})}}{t^{s+{d}+\frac1q}}}_{L^q_t(0,\ell_0)}}_{L^p_x(\Omega^c)}.
\end{align*}
{Here, and for the rest of the paper, $\Omega_{x,t}:=\Omega\cap B_{x,t}$ and $\Omega^c_{x,t}:=\overline{\Omega}^c\cap B_{x,t}$. }

Let us begin with 
\begin{align*}
\circled{a}
	& = \norm{\norm{\frac{ \norm{f(x)-\sum_{S\in\mathcal{W}_3} \psi_{S}(y) f_{S^*}}_{L^1_y (\Omega^c_{x,t})}}{t^{s+{d}+\frac1q}}}_{L^q_t(0,\ell_0)}}_{L^p_x(\Omega)}.
\end{align*}
Call $\mathcal{W}_4:=\{S\in\mathcal{W}_3: \mbox{ all the neighbors of $S$ are in } \mathcal{W}_3\}$. We write $ \mathcal{W}_j(Q,t):=\{S\in\mathcal{W}_j: S\cap \bigcup_{x\in Q} B_{x,t}\neq \emptyset \} $. 
Note that {the integral in $t$ is null if $x\in Q\in \mathcal{W}_0\setminus\mathcal{W}_1$. Moreover,} 
 if $S\in\mathcal{W}_3(Q,t)$ with $Q\in \mathcal{W}_1$ and $t<\ell_0$, then $S\in \mathcal{W}_4$. Given $y\in \frac{11}{10}S$, where $S\in \mathcal{W}_4$, we have that $\sum_{P\in\mathcal{W}_3} \psi_P(y)\equiv 1$. Therefore
\begin{align*}
\circled{a}
	&  \leq \norm{\norm{\norm{\frac{ \sum_{S\in\mathcal{W}_4(Q,t)} |   f(x)- f_{S^*}| \int_{\frac{11}{10}S} \psi_{S}(y) \,dy}{t^{s+{d}+\frac1q}}}_{L^q_t(0,\ell_0)}}_{L^p_x(Q)}}_{\ell^p_Q(\mathcal{W}_{1})}
\\
\end{align*}
{Above,  $\ell^p_Q(\mathcal{W}_1)$ stands for the $\ell^p$ of a sequence indexed by the  Whitney cubes in $\mathcal{W}_1$. }
By the choice of the symmetrized cube we have that $\int_{\frac{11}{10}S}  \psi_{S}(y) \,dy \approx \ell(S^*)^d$. Thus,
\begin{align*}
\circled{a}
	& \lesssim_d \norm{\norm{\norm{\frac{\sum_{S\in\mathcal{W}_4(Q,t)} \int_{S^*}|f(x)-f(\xi)|\, d\xi}{t^{s+{d}+\frac1q}}}_{L^q_t(0,\ell_0)}}_{L^p_x(Q)}}_{\ell^p_Q(\mathcal{W}_{1})}
\end{align*}
By \rf{eqLongDistanceInvariant}, if $Q\in\mathcal{W}_1$ and $S\in \mathcal{W}_4(Q,t)$ then $S^*\in \mathcal{W}_1(Q, C_\Omega t)$ and using also the finite overlapping of the symmetrized cubes, we get that 
\begin{align*}
\circled{a}
	& \lesssim_{d,\varepsilon} \norm{\norm{\norm{\frac{\sum_{S\in\mathcal{W}_1(Q,C_\Omega t)} \int_{S}|f(x)-f(\xi)|\, d\xi}{t^{s+{d}+\frac1q}}}_{L^q_t(0,\ell_0)}}_{L^p_x(Q)}}_{\ell^p_Q(\mathcal{W}_{1})}\lesssim_{s,d}  \norm{f}_{\dot F^{s,C_\Omega}_{p,q,1}(\Omega)}.
\end{align*}
	

Next, note that, using the same reasoning as above and the finite superposition of the rescaled Whitney cubes, we have that
\begin{align*}
\circled{b}
	& = \norm{\norm{\frac{ \norm{\sum_{Q\in\mathcal{W}_3} \psi_{Q}(x) f_{Q^*}- f(y)}_{L^1_y (\Omega_{x,t})}}{t^{s+{d}+\frac1q}}}_{L^q_t(0,\ell_0)}}_{L^p_x(\Omega^c)}\\
	& \lesssim_{d,p} \norm{\norm{ \psi_Q(x) \norm{\frac{ \norm{ f_{Q^*}- f(y)}_{L^1_y (\Omega_{x,t})}}{t^{s+{d}+\frac1q}}}_{L^q_t(0,\ell_0)}}_{L^p_x(\frac{11}{10}Q)}}_{\ell^p_Q(\mathcal{W}_4)}.
\end{align*}
Taking absolute values inside and enlarging the integration domain in $y$ and computing the integral in $Q$, we have that
\begin{align*}
\circled{b}
	& \lesssim_{d,p} \norm{ \ell(Q)^\frac dp \norm{\frac{ \norm{\frac{1}{\ell(Q)^{d}} \norm{f(\xi)-f(y)}_{L^1_\xi(Q^*) }}_{L^1_y (\Omega_{Q,t})}}{t^{s+{d}+\frac1q}}}_{L^q_t(0,\ell_0)}}_{\ell^p_Q(\mathcal{W}_4)}\\
\end{align*}
where $\Omega_{Q,t}=\bigcup_{x\in Q}\Omega_{x,t}$. Thus, applying Fubini's theorem and  Minkowsky's integral inequality (see \cite[Appendix A1]{SteinPetit}), we get
\begin{align*}
\circled{b}
	& \lesssim_{d,p} \norm{ \frac{\ell(Q)^\frac dp}{\ell(Q)^{d}} \norm{\norm{\frac{ \norm{f(\xi)-f(y)}_{L^1_y (\Omega_{Q,t})}}{t^{s+{d}+\frac1q}}}_{L^q_t(0,\ell_0)}}_{L^1_\xi(Q^*) }}_{\ell^p_Q(\mathcal{W}_4)}.
\end{align*}
If $Q\in \mathcal{W}_4$ and $y\in \Omega_{Q,t}$, then $y\in\Omega_{\xi,C_\Omega t}$ for every $\xi\in Q^*$. By H\"older's inequality and the finite overlapping of symmetrized cubes, we get that
\begin{align*}
\circled{b}
	& \lesssim_{d,p,\varepsilon} \norm{ \norm{\norm{\frac{ \norm{f(\xi)-f(y)}_{L^1_y (\Omega_{\xi,C_\Omega t})}}{t^{s+{d}+\frac1q}}}_{L^q_t(0,\ell_0)}}_{L^p_\xi(Q) }}_{\ell^p_Q(\mathcal{W}_1)}	\lesssim  \norm{f}_{\dot F^{s,C_\mathcal{W}}_{p,q,1}(\Omega)}.
\end{align*}
	

Let us focus on $\circled{c}$.  By the triangle inequality, we have that
\begin{align*}
\circled{c}
	& \leq \norm{ \norm{\norm{\frac{ \norm{\sum_{P\in\mathcal{W}_3} \psi_{P}(x) f_{P^*}-\sum_{S\in\mathcal{W}_3} \psi_{S}(y) f_{S^*}}_{L^1_y (\Omega^c_{x,t})}}{t^{s+{d}+\frac1q}}}_{L^q_t(\frac{\ell(Q)}{10},\ell_0)}}_{L^p_x(Q) }}_{\ell^p_Q(\mathcal{W}_4)}\\
	& \quad + \norm{ \norm{\norm{\frac{ \norm{\sum_{P\in\mathcal{W}_3} \psi_{P}(x) f_{P^*}-\sum_{S\in\mathcal{W}_3} \psi_{S}(y) f_{S^*}}_{L^1_y (\Omega^c_{x,t})}}{t^{s+{d}+\frac1q}}}_{L^q_t(0,\frac{\ell(Q)}{10})}}_{L^p_x(Q) }}_{\ell^p_Q(\mathcal{W}_4)}\\
	& \quad + \norm{ \norm{\norm{\frac{ \norm{\sum_{P\in\mathcal{W}_3} \psi_{P}(x) f_{P^*}-\sum_{S\in\mathcal{W}_3} \psi_{S}(y) f_{S^*}}_{L^1_y (\Omega^c_{x,t})}}{t^{s+{d}+\frac1q}}}_{L^q_t(0,\ell_0)}}_{L^p_x(Q) }}_{\ell^p_Q(\mathcal{W}_2\setminus\mathcal{W}_4)}\\
	& =: \circled{c1}+\circled{c2}+\circled{c3}
\end{align*}

%

The first term is bounded using the same techniques as in $\circled{a}$ and $\circled{b}$. 
Indeed, given $x\in \frac{11}{10}Q$ where $Q\in \mathcal{W}_4$ and $y\in \Omega^c_{x, {\ell_0}/{10}}$, then neither $x$ nor $y$ are in the support of any bump function of a cube in $\mathcal{W}_2\setminus\mathcal{W}_3$, so $\sum_{S\in\mathcal{W}_3} \psi_S(y)\equiv 1$ and $\sum_{P\in\mathcal{W}_3} \psi_P(x)\equiv 1$. Therefore
$$\sum_{P\in\mathcal{W}_3} \psi_{P}(x) f_{P^*}-\sum_{S\in\mathcal{W}_3} \psi_{S}(y) f_{S^*}=  \sum_{P\cap 2Q\neq \emptyset}  \sum_{S\in\mathcal{W}_3} \psi_{P}(x)\psi_{S}(y)\left(f_{P^*}-f_{S^*}\right).$$
Using first the triangle inequality and the bounded number of neighboring cubes, and then computing the integral in $x$, taking absolute values inside the inner integral and changing the order of summation on $Q$ and $P$ we get
\begin{align*}
\circled{c1}
	&=\norm{ \norm{\norm{\frac{ \norm{\sum_{P\cap 2Q\neq \emptyset}  \sum_{S\in\mathcal{W}_3}\left|\psi_{P}(x) \psi_{S}(y)\right| \left|f_{P^*}- f_{S^*}\right|}_{L^1_y (\Omega^c_{x,t})}}{t^{s+{d}+\frac1q}}}_{L^q_t(\frac{\ell(Q)}{10},\ell_0)}}_{L^p_x(Q) }}_{\ell^p_Q(\mathcal{W}_4)}\\
	&  \lesssim_{d,p} \norm{\left(\sum_{P\cap 2Q\neq \emptyset} \norm{ \psi_{P}(x)\norm{\frac{ \sum_{S\in\mathcal{W}_3(P,C_dt)}\left|f_{P^*}- f_{S^*}\right| \ell(S)^d}{t^{s+{d}+\frac1q}}}_{L^q_t(\frac{\ell(Q)}{10},\ell_0)}}_{L^p_x(Q) }^p\right)^\frac1p}_{\ell^p_Q(\mathcal{W}_4)}\\
	&  \lesssim_d \norm{ \ell(P)^\frac dp \norm{\frac{  \sum_{S\in\mathcal{W}_3(P,C_dt)}  \frac{1}{\ell(P)^d}\norm{\norm{ f(\xi)- f(\zeta) }_{L^1_\zeta(S^*)}}_{L^1_\xi(P^*)}}{t^{s+{d}+\frac1q}}}_{L^q_t(\frac{\ell(P)}{20},\ell_0)}}_{\ell^p_P(\mathcal{W}_3)},
\end{align*}
and applying Minkowski's and Jensen's inequalities we obtain
\begin{align*}
\circled{c1}
	&\lesssim_{d,p}  \norm{ \norm{ \norm{\frac{  \sum_{S\in\mathcal{W}_3(P,C_dt)} \norm{ f(\xi)- f(\zeta) }_{L^1_\zeta(S^*)}}{t^{s+{d}+\frac1q}}}_{L^q_t(\frac{\ell(P)}{20},\ell_0)}}_{L^p_\xi(P^*)}}_{\ell^p_P(\mathcal{W}_3)}.\end{align*}
By Lemma \ref{lemSymmetrized}, we get that 
\begin{align*}
\circled{c1}
	& \lesssim_{d,p,\varepsilon}   \norm{ \norm{\frac{  \norm{ f(\xi)- f(\zeta) }_{L^1_\zeta(\Omega_{\xi,C_\Omega t})}}{t^{s+{d}+\frac1q}}}_{L^q_t(0,\ell_0)}}_{L^p_\xi(\Omega)}\lesssim \norm{f}_{\dot F^{s,C_\Omega}_{p,q,1}(\Omega)}.\end{align*}

If $x\in \frac{11}{10}Q$ where $Q\in \mathcal{W}_4$ and $y\in \Omega^c_{x,\ell(Q)/10}$, since the points are `close' to each other, we will use the Lipschitz regularity of the bump functions, so we write 
\begin{equation}\label{eqDecompositionC23}
\sum_{P\in\mathcal{W}_3} \psi_{P}(x) f_{P^*}-\sum_{S\in\mathcal{W}_3} \psi_{S}(y) f_{S^*}= \sum_{P\in\mathcal{W}_3}\left( \psi_{P}(x)-\psi_{P}(y)\right)f_{P^*}.
\end{equation}
Now we use that $\{\psi_Q\}$ is a partition of the unity with $\psi_Q$ supported in $\frac{11}{10}Q$, that is, $\sum_{S\in\mathcal{W}_3}\psi_S(x)=\sum_{S\cap 2Q\neq \emptyset}\psi_S(x)=1$ if $x \in \frac{11}{10}Q$ with $Q \in \mathcal{W}_4$. Thus,
\begin{align*}
\circled{c2}
	& = \norm{ \norm{\norm{\frac{ \norm{\sum_{S\cap 2Q\neq \emptyset} \left(\psi_{S}(x)- \psi_{S}(y) \right)f_{S^*}}_{L^1_y (\Omega^c_{x,t})}}{t^{s+{d}+\frac1q}}}_{L^q_t(0,\frac{\ell(Q)}{10})}}_{L^p_x(Q) }}_{\ell^p_Q(\mathcal{W}_4)}\\
	& = \norm{ \norm{\norm{\frac{ \norm{\sum_{S\cap 2Q\neq \emptyset} \left(\psi_{S}(x)- \psi_{S}(y) \right)\left(f_{S^*}-f_{Q^*}\right)}_{L^1_y (\Omega^c_{x,t})}}{t^{s+{d}+\frac1q}}}_{L^q_t(0,\frac{\ell(Q)}{10})}}_{L^p_x(Q) }}_{\ell^p_Q(\mathcal{W}_4)}.
\end{align*}
Using the fact that $\norm{\nabla\psi_Q }_\infty\lesssim\frac{1}{\ell(Q)}$ and computing the integrals in $x$ and $t$, we have that
\begin{align*}
\circled{c2}
	& \leq\norm{ \norm{\norm{\frac{ |\Omega^c_{x,t}|  \sum_{S\cap 2Q\neq \emptyset} \norm{\nabla\psi_S}_\infty t \left|f_{S^*}-f_{Q^*}\right|}{t^{s+{d}+\frac1q}}}_{L^q_t(0,\frac{\ell(Q)}{10})}}_{L^p_x(Q) }}_{\ell^p_Q(\mathcal{W}_4)}\\
	& \lesssim_d\norm{  \ell(Q)^\frac dp\norm{t^{1-s-\frac1q}}_{L^q_t(0,\frac{\ell(Q)}{10})} \sum_{S\cap 2Q\neq \emptyset}   \frac{\left|f_{S^*}-f_{Q^*}\right|}{\ell(Q)}}_{\ell^p_Q(\mathcal{W}_4)}\\
	& \lesssim_{d,s} \norm{ \ell(Q)^{\frac dp+1-s-1-2d} \norm{\norm{f(\zeta)-f(\xi)}_{L^1_\zeta(C_\Omega Q^*)}}_{L^1_\xi(Q^*)} }_{\ell^p_Q(\mathcal{W}_4)}
\end{align*}
and using Jensen's inequality and Lemma \ref{lemSymmetrized} we get
\begin{align*}
\circled{c2}
	&\lesssim_{d,s,\varepsilon} \norm{ \ell(Q)^{-s-d} \norm{\norm{\left|f(\zeta)-f(\xi)\right| \fint_{|\xi-\zeta|}^{c_1\ell(Q)}dt}_{L^1_\zeta(C_\Omega Q)}}_{L^p_\xi(Q)} }_{\ell^p_Q(\mathcal{W}_1)}.
\end{align*}
If $c_1$ is chosen big enough, depending only on $d$ and the corkscrew constants of $\Omega$, so that $c_1\ell(Q)-C_\Omega \diam(Q)\approx \ell(Q)$, using Fubini's theorem and H\"older's inequality we obtain
 \begin{align*}
\circled{c2}
		&\lesssim_{d,s,\varepsilon} \norm{  \norm{\norm{\frac{\norm{f(\zeta)-f(\xi) }_{L^1_\zeta(\Omega_{\xi,t})} }{\ell(Q)^{s+d+1}}}_{L^1_t (0,c_1\ell(Q))}}_{L^p_\xi(Q)} }_{\ell^p_Q(\mathcal{W}_1)}\lesssim \norm{f}_{\dot F^{s,C_\Omega}_{p,q,1}(\Omega)}.
\end{align*}

Decomposition \rf{eqDecompositionC23} is still valid if $Q\in \mathcal{W}_2\setminus \mathcal{W}_4$ and $y\in \Omega^c_{x,\ell(Q)/10}$. In particular if  $y\in \Omega^c_{x,\ell_0}$ we can use the decomposition, but we treat this case apart since we lose the cancellation of the sums of bump functions and we gain instead a uniform lower bound on the side-lengths of the cubes involved:
\begin{align*}
\circled{c3}
	& = \norm{ \norm{\norm{\frac{ \norm{\sum_{S\in \mathcal{W}_3: S\cap 2Q\neq \emptyset} \left(\psi_{S}(x)- \psi_{S}(y) \right)f_{S^*}}_{L^1_y (\Omega^c_{x,t})}}{t^{s+{d}+\frac1q}}}_{L^q_t(0,\ell_0)}}_{L^p_x(Q) }}_{\ell^p_Q(\mathcal{W}_2\setminus\mathcal{W}_4)}\\
	& \lesssim_d \norm{ \norm{\norm{\frac{ |\Omega^c_{x,t}| \sum_{S\in \mathcal{W}_3: S\cap 2Q\neq \emptyset}| \frac{t}{\ell_0} f_{S^*}|}{t^{s+{d}+\frac1q}}}_{L^q_t(0,\ell_0)}}_{L^p_x(Q) }}_{\ell^p_Q(\mathcal{W}_2\setminus\mathcal{W}_4)}.
\end{align*}
Computing the integrals in $x$ and $t$ and using the triangle inequality we get
\begin{align*}
\circled{c3}
	& \lesssim_d \frac1{\ell_0} \norm{  \ell(Q)^\frac dp\sum_{S\in \mathcal{W}_3: S\cap 2Q\neq \emptyset}|   f_{S^*}| \norm{t^{1-s-\frac1q}}_{L^q_t(0,\ell_0)}}_{\ell^p_Q(\mathcal{W}_2\setminus\mathcal{W}_4)}\\
	& \lesssim_{d,p,s} \frac{1}{\ell_0^s} \left(\sum_{Q\in\mathcal{W}_2\setminus\mathcal{W}_4}   \sum_{S\in \mathcal{W}_3: S\cap 2Q\neq \emptyset}| f_{S^*}|^p \ell(S)^d\right)^{\frac{1}{p}} \lesssim_\varepsilon \norm{f}_{L^p(\Omega)}.
\end{align*}

\end{proof}

\begin{remark}\label{remProofExtension}
It is usual in the literature to define a uniform domain as a domain satisfying the interior corkscrew condition and the so-called Harnack chain condition (this definition can be seen to be equivalent to the one given here). The interior corkscrew condition can be understood as a quantitative openness condition, while the Harnack chain can be understood as a quantitative connectedness condition. It is not quite surprising that we can drop the connectedness condition for smoothness below one, since the norm is completely non-local. That is, the connection between points following paths inside the domain is not needed because the first-order difference is always included in the norm itself. 

The reader may note that the interior corkscrew condition is a bit stronger than the conditions that we have used. Indeed, the proof works for a domain $\Omega$ such that $\overline{\Omega}^c$ is an exterior corkscrew domain and such that $\partial\Omega\setminus \partial(\overline{\Omega}^c)$ has null Lebesgue measure. For instance one can remove segments on planar domains without changing the extendability properties for $F^s_{p,q}$ with $0<s<1$. \cite[Theorem 1.4]{PratsSaksman} can also be proven in such a general setting, with the restriction in the indices $s>\frac dp-\frac dq$.
\end{remark}

\subsection{Uniform domains and smoothness above one}\label{secUniformSmoothness}

Norman G. Meyers introduced a collection of projections $L:W^{k,p}(Q) \to \mathcal{P}^k$ in \cite{Meyers} which allows us to iterate the Poincar\'e inequality. Peter Jones uses the following particular simple case:
\begin{definition}
Let  $Q \subset \R^d$. Given $f\in L^1(Q)$ with weak derivatives up to order $k$, we define $\mathbf{P}^{k}_Q f\in \mathcal{P}^{k}$ as the unique polynomial of degree smaller or equal than $k$ such that 
\begin{equation}\label{eqdefpnQ}
\int_{Q} D^\beta \mathbf{P}_Q^k f \,dm=\int_{Q} D^\beta f\, dm
\end{equation}
for every multiindex $\beta \in \N^d$ with $|\beta| \leq k$.
\end{definition}

\begin{lemma}[see {\cite[Lemma 4.2]{PratsTolsa}}]\label{lempoly}
Given a cube $Q$ and $f\in W^{k,1}(Q)$, the polynomial $\mathbf{P}^{k}_{Q} f\in \mathcal{P}^{k}$ exists and is unique.
Furthermore, this polynomial has the following properties:
\begin{enumerate}
\item The norm of the polynomial is controlled by 
\begin{equation}\label{eqP1Bis}
\norm{P_{Q}^{k} f}_{L^p(Q)}\leq c_k  \sum_{j=0}^{k}  \ell(Q)^j \norm{\nabla^{j} f}_{L^p(Q)} \quad\quad \mbox{for } 1\leq p \leq \infty. 
\end{equation}

\item Furthermore, if $f\in W^{k,p}(Q)$, for ${1\leq p \leq \infty}$ we have
 \begin{equation}\label{eqP2}
\|f-\mathbf{P}_{Q}^{k} f\|_{L^p(Q)}\leq C \ell(Q)^k \norm{\nabla^k f - (\nabla^k f)_{Q}}_{L^p(Q)} .
 \end{equation}
{Here and through all the text $(f)_Q$ will denote the mean of $f$ in a cube $Q$, with $f$ possibly vector-valued.}

\item Given a uniform domain $\Omega$ with Whitney covering $\mathcal{W}$, given $\beta \in \N_0^d$ with $|\beta|\leq k$ and given two Whitney cubes $Q, S\in \mathcal{W}$ and $f\in W^{k,1}(\Omega)$, 
\begin{equation}\label{eqP3}
\norm{D^\beta (\mathbf{P}^{k}_{S} f-\mathbf{P}^{k}_{Q} f)}_{L^1(S)}  \leq \sum_{P\in [S,Q]}\frac{\ell(S)^d \Dist(P,S)^{k-|\beta|}}{\ell(P)^d}\norm{\nabla^k f - (\nabla^k f)_P}_{L^1(5P)}.
\end{equation}
%
%
\end{enumerate}
\end{lemma}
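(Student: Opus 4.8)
The plan is to establish the three properties in turn; existence/uniqueness and items (1)--(2) are essentially soft, while item (3) carries the real work and uses the chain structure.

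\textbf{Existence, uniqueness, and item (1).} The map $\pi\mapsto\bigl(\int_Q D^\beta\pi\,dm\bigr)_{|\beta|\le k}$ is linear from $\mathcal{P}^k$ to $\R^N$ with $N=\dim\mathcal{P}^k=\#\{\beta\in\N_0^d:|\beta|\le k\}$, so it is enough to check injectivity: if $\pi\in\mathcal{P}^k$ annihilates all these functionals, then reading the identity for $|\beta|=k$ (where $D^\beta\pi$ is a constant) forces every top-degree coefficient of $\pi$ to vanish, and running the same argument through $|\beta|=k-1,\dots,0$ in descending order kills all the remaining coefficients; hence $\pi\equiv0$, which gives \eqref{eqdefpnQ} a unique solution. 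For \eqref{eqP1Bis} I would first prove the bound on the unit cube $Q_0=Q(0,1)$, where all norms on the finite-dimensional space $\mathcal{P}^k$ are comparable and the coefficients of $\mathbf{P}^k_{Q_0}f$ are bounded linear functionals of the data $\bigl(\int_{Q_0}D^\beta f\,dm\bigr)_{|\beta|\le k}$; since $\bigl|\int_{Q_0}D^\beta f\,dm\bigr|\le\norm{D^\beta f}_{L^1(Q_0)}\lesssim\norm{\nabla^{|\beta|}f}_{L^p(Q_0)}$ (using $L^p(Q_0)\hookrightarrow L^1(Q_0)$), one gets $\norm{\mathbf{P}^k_{Q_0}f}_{L^p(Q_0)}\lesssim\sum_{j=0}^k\norm{\nabla^j f}_{L^p(Q_0)}$. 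I would then transfer to a general cube $Q$ through the affine dilation $\phi$ carrying $Q_0$ onto $Q$, checking that $\mathbf{P}^k_{Q_0}(f\circ\phi)=(\mathbf{P}^k_Q f)\circ\phi$ (the defining integral identities transform consistently because $D\phi=\tfrac{\ell(Q)}{\ell(Q_0)}{\rm Id}$) and tracking the resulting powers of $\ell(Q)$ and of $\ell(Q)^{d/p}$ coming from the scaling of derivatives and of Lebesgue measure.

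\textbf{Item (2).} The key observation is that by \eqref{eqdefpnQ} the function $g:=f-\mathbf{P}^k_Q f$ satisfies $\int_Q D^\beta g\,dm=0$ for every $|\beta|\le k$; in particular $(\nabla^j g)_Q=0$ for $0\le j\le k$ and $\nabla^k g=\nabla^k f-(\nabla^k f)_Q$. Applying the Poincar\'e--Wirtinger inequality $\norm{h-h_Q}_{L^p(Q)}\lesssim\ell(Q)\norm{\nabla h}_{L^p(Q)}$ to $h=D^\gamma g$ with $|\gamma|=j-1$ gives $\norm{\nabla^{j-1}g}_{L^p(Q)}\lesssim\ell(Q)\norm{\nabla^j g}_{L^p(Q)}$, and iterating from $j=k$ down to $j=1$ yields $\norm{g}_{L^p(Q)}\lesssim\ell(Q)^k\norm{\nabla^k g}_{L^p(Q)}$, which is \eqref{eqP2}.

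\textbf{Item (3), the main obstacle.} I would telescope along the $\varepsilon$-admissible chain $[S,Q]=(S=P_1,P_2,\dots,P_m=Q)$, writing $\mathbf{P}^k_S f-\mathbf{P}^k_Q f=\sum_{i=1}^{m-1}\pi_i$ with $\pi_i:=\mathbf{P}^k_{P_i}f-\mathbf{P}^k_{P_{i+1}}f\in\mathcal{P}^k$. For consecutive neighbours $\ell(P_i)\approx\ell(P_{i+1})$ and $P_i\cup P_{i+1}\subset5P_i$, so comparing both polynomials with $f$ on $5P_i$ and using a mildly enlarged-cube version of \eqref{eqP2} gives $\norm{\pi_i}_{L^1(P_i)}\lesssim\ell(P_i)^k\norm{\nabla^k f-(\nabla^k f)_{P_i}}_{L^1(5P_i)}$. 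Since $D^\beta\pi_i$ is a polynomial of degree $\le k-|\beta|$, a Bernstein-type bound and norm equivalence for polynomials of bounded degree on comparable cubes give $\sup_{P_i}|D^\beta\pi_i|\lesssim\ell(P_i)^{-|\beta|-d}\norm{\pi_i}_{L^1(P_i)}$, while polynomial growth gives $\sup_S|D^\beta\pi_i|\lesssim\bigl(1+\dist(P_i,S)/\ell(P_i)\bigr)^{k-|\beta|}\sup_{P_i}|D^\beta\pi_i|$. Integrating over $S$ and using that $\Dist(P_i,S)={\rm D}(P_i,S)\approx\ell(P_i)+\ell(S)+\dist(P_i,S)$ is the long distance (so $\Dist(P_i,S)\ge\ell(P_i)$, which also absorbs the cubes $P_i$ close to $S$), I obtain
\[
\norm{D^\beta\pi_i}_{L^1(S)}\lesssim\ell(S)^d\,\bigl(\Dist(P_i,S)/\ell(P_i)\bigr)^{k-|\beta|}\,\ell(P_i)^{-|\beta|-d}\,\norm{\pi_i}_{L^1(P_i)}\lesssim\frac{\ell(S)^d\,\Dist(P_i,S)^{k-|\beta|}}{\ell(P_i)^d}\,\norm{\nabla^k f-(\nabla^k f)_{P_i}}_{L^1(5P_i)},
\]
and summing over $i$ (and enlarging the sum to all $P\in[S,Q]$, which only adds nonnegative terms) yields \eqref{eqP3}. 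The delicate points I expect to handle carefully are the passage from \eqref{eqP2} on $Q$ to the same estimate on a fixed dilate of $Q$ (comparing $\mathbf{P}^k_Q$ with $\mathbf{P}^k_{CQ}$ together with a reverse-type inequality for polynomials) and the exact bookkeeping of the exponents $k-|\beta|$ and of the powers of $\ell(P_i)$, so that they telescope into precisely the claimed right-hand side; the uniformity hypothesis enters only to guarantee that the connecting collection $[S,Q]$ exists.
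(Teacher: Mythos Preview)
Your proposal is correct and follows the same route as the references the paper defers to: the paper does not prove this lemma in detail but cites \cite[Lemma 4.2]{PratsTolsa} for items (1)--(2) and points to \cite[Lemma 3.1]{Jones} for \eqref{eqP3}, and your sketch (unit-cube normalisation plus scaling for \eqref{eqP1Bis}, iterated Poincar\'e for \eqref{eqP2}, telescoping along the chain combined with a Bernstein/polynomial-growth transfer for \eqref{eqP3}) is exactly the standard argument used there. The bookkeeping you flag as delicate --- upgrading \eqref{eqP2} from $Q$ to $5Q$ and the exponent count leading to $\ell(S)^d\Dist(P,S)^{k-|\beta|}\ell(P)^{-d}$ --- is handled just as you outline, and your remark that uniformity is used only to furnish the chain $[S,Q]$ is accurate (the $\varepsilon$-admissibility of the chain is exploited only later, when \eqref{eqP3} is summed).
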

Estimate \rf{eqP3} can be shown just using the approach in \cite[Lemma 3.1]{Jones}. 

 We  define the operator $\Lambda_k: W^{k,1}_{\rm loc}(\Omega) \to W^{k,1}_{\rm loc}(\Omega\cup \overline{\Omega}^c)$ as
$$\Lambda_k f(x)= f(x)\chi_\Omega(x) + \sum_{Q\in\mathcal{W}_3} \psi_Q(x) P^k_{Q^*} f(x).$$
{This} function is defined almost everywhere because the boundary of the domain $\Omega$ has zero Lebesgue measure (see \cite[Lemma 2.3]{Jones}). Note that the operator can be defined in any interior corkscrew domain, but it {may} fail to be an extension operator if the domain is not uniform (see \cite{Jones,Shvartsman,KoskelaRajalaZhang} for optimal conditions to grant the existence of an extension operator for $W^{1,p}$).

\begin{theorem}\label{theoExtension}
Let $\Omega$ be a uniform domain and $k\in \N$. For every $0<\sigma< 1$, {$1\leq p<\infty$ and $1\leq q \leq \infty$}  with $\sigma>\frac{d}{p}-\frac{d}{q}$  and $\ell_0$ small enough, then 
$$\Lambda_k:F^{s,C\ell_0}_{p,q,1}(\Omega)\to F^{s,\ell_0}_{p,q,1}(\R^n)$$
(with $s=\sigma+k$) is a bounded {extension} operator.
\end{theorem}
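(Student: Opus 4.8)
The plan is to follow the blueprint of the proof of Theorem \ref{theoExtensionOperator0}, replacing the averages $f_{Q^*}$ by the polynomial projections $\mathbf{P}^k_{Q^*}f$ of Lemma \ref{lempoly} and bringing in the chain/shadow machinery of Section \ref{secUniform}. By \rf{eqCompareNormsRadius} it suffices to bound $\norm{\Lambda_k f}_{F^{s,\ell_0}_{p,q,1}(\R^d)}$ by $\norm{f}_{F^{s,C\ell_0}_{p,q,1}(\Omega)}$ for $\ell_0$ small. The $W^{k,p}$ part is free: $\norm{\Lambda_k f}_{W^{k,p}(\R^d)}\lesssim\norm{f}_{W^{k,p}(\Omega)}\le\norm{f}_{F^{s,C\ell_0}_{p,q,1}(\Omega)}$ is the classical boundedness of the Jones--Meyers extension on uniform domains (see \cite{Jones} and \cite{PratsTolsa}). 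Since $\partial\Omega$ is Lebesgue-null, $\nabla^k\Lambda_k f$ equals $\nabla^k f$ on $\Omega$ and $\sum_{Q\in\mathcal{W}_3}\nabla^k(\psi_Q\,\mathbf{P}^k_{Q^*}f)$ on $\overline\Omega^c$, so what remains is the seminorm $\norm{\nabla^k\Lambda_k f}_{\dot F^{\sigma,\ell_0}_{p,q,1}(\R^d)}$. I would split the integration pair $(x,y)$ according to whether each point lies in $\Omega$ or in $\overline\Omega^c$: the piece with $x,y\in\Omega$ is at once $\le\norm{f}_{\dot F^{\sigma,\ell_0}_{p,q,1}(\Omega)}$, and there remain the three genuine pieces $\circled{a}$ ($x\in\Omega$, $y\in\overline\Omega^c$), $\circled{b}$ ($x\in\overline\Omega^c$, $y\in\Omega$) and $\circled{c}$ ($x,y\in\overline\Omega^c$), exactly as in Theorem \ref{theoExtensionOperator0}.

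For $\circled{a}$ and $\circled{b}$, the key identity is that for an exterior cube $S\in\mathcal{W}_4$ and $y\in\tfrac{11}{10}S$ one has $\sum_Q\psi_Q(y)\equiv1$ there, hence
\[
\nabla^k\Lambda_k f(y)=(\nabla^k f)_{S^*}+\sum_{Q:\,\frac{11}{10}Q\cap\frac{11}{10}S\neq\emptyset}\ \sum_{0\leq j\leq k}\binom{k}{j}\,\nabla^j\psi_Q(y)\,\nabla^{k-j}\bigl(\mathbf{P}^k_{Q^*}f-\mathbf{P}^k_{S^*}f\bigr)(y),
\]
where the $j=0$ terms are the constants $(\nabla^k f)_{Q^*}-(\nabla^k f)_{S^*}$. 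The leading term produces $\nabla^k f(x)-(\nabla^k f)_{S^*}$, a first-order difference of $\nabla^k f$ averaged over the interior cube $S^*$, which is precisely the kind of quantity measured by $\norm{f}_{\dot F^{\sigma,C\ell_0}_{p,q,1}(\Omega)}$; in the remaining terms I would use $\norm{\nabla^j\psi_Q}_\infty\lesssim\ell(S)^{-j}$, the fact that $Q,S$ are neighbours (so $[Q^*,S^*]$ has length $\approx\ell(S)$), and estimate \rf{eqP3} to bound $\nabla^{k-j}(\mathbf{P}^k_{Q^*}f-\mathbf{P}^k_{S^*}f)$ on $S^*$ by $\sum_{R\in[Q^*,S^*]}\frac{\ell(S)^d\Dist(R,S^*)^{j}}{\ell(R)^d}\norm{\nabla^k f-(\nabla^k f)_R}_{L^1(5R)}$. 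Plugging this into the $t$-integral, transferring from exterior to interior cubes via Lemma \ref{lemSymmetrized} and \rf{eqLongDistanceInvariant}, and using $|x-y|\approx\ell(S)+\Dist(x,S^*)$ for $x\in\Omega$, $y\in S\in\mathcal{W}_3(Q,t)$, $Q\in\mathcal{W}_1$, one reduces to sums of $\norm{\nabla^k f-(\nabla^k f)_R}_{L^1(5R)}$ over interior Whitney cubes and over admissible chains; these collapse by \rf{eqAscendingToGlory}, \rf{eqAscendingPath}, \rf{eqMaximalAllOver}, \rf{eqMaximalGuay} and the $L^p$-boundedness of the maximal operator, giving $\circled{a}+\circled{b}\lesssim\norm{f}_{\dot F^{\sigma,C\ell_0}_{p,q,1}(\Omega)}$.

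For $\circled{c}$ I would reproduce the split of Theorem \ref{theoExtensionOperator0} into the regimes $t\gtrsim\ell(Q)$, $t\lesssim\ell(Q)$ (both with $Q\in\mathcal{W}_4$) and $Q\in\mathcal{W}_2\setminus\mathcal{W}_4$. The first regime goes like $\circled{a}$: only the constants $(\nabla^k f)_{P^*}$ and differences of polynomial projections survive, and $|(\nabla^k f)_{P^*}-(\nabla^k f)_{S^*}|$ is controlled by \rf{eqP3} along admissible chains. In the second regime $x,y$ lie in boundedly many neighbouring exterior cubes; writing $\nabla^k\Lambda_k f(x)-\nabla^k\Lambda_k f(y)=\sum_{S\cap2Q\neq\emptyset}\nabla^k[(\psi_S\,\mathbf{P}^k_{S^*}f)(x)-(\psi_S\,\mathbf{P}^k_{S^*}f)(y)]$, subtracting $\mathbf{P}^k_{Q^*}f$ so that $\nabla^{k+1}$ of a single polynomial drops, and using the mean value inequality, I would bound this by $|x-y|\sum_{S\cap2Q\neq\emptyset}\sum_j\ell(Q)^{-1-j}\norm{\nabla^{k-j}(\mathbf{P}^k_{S^*}f-\mathbf{P}^k_{Q^*}f)}_{L^\infty(2Q)}$, and apply \rf{eqP3} again to reach the same chain sums, yielding $\norm{f}_{\dot F^{\sigma,C\ell_0}_{p,q,1}(\Omega)}$ exactly as in $\circled{c2}$. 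In the third regime $\ell(Q)\gtrsim\ell_0$, so all side-lengths are bounded below; I would bound $\mathbf{P}^k_{S^*}f$ in $L^p(S^*)$ by \rf{eqP1Bis}, i.e.\ by $\sum_{j\le k}\ell(S)^j\norm{\nabla^j f}_{L^p(S^*)}\lesssim\norm{f}_{W^{k,p}(\Omega)}$, and absorb the negative powers of $\ell_0$ into the constant, obtaining $\circled{c3}\lesssim\norm{f}_{W^{k,p}(\Omega)}$.

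The hard part will be the bookkeeping forced by the polynomial projections: at every occurrence of $\nabla^k(\psi_Q\,\mathbf{P}^k_{Q^*}f)$ one must expand by Leibniz, isolate the constant term $(\nabla^k f)_{Q^*}$, subtract a judiciously chosen reference polynomial so that estimate \rf{eqP3} is applicable, and then match the powers of $t$, $\ell(Q)$, $\Dist(\cdot,\cdot)$, the maximal-function exponent and the chain exponents so that, after summing over all Whitney cubes and all admissible chains via \rf{eqAscendingToGlory}, \rf{eqAscendingPath}, \rf{eqMaximalAllOver}, \rf{eqMaximalGuay} and the boundedness of $M$ on $L^p$, everything collapses to $\norm{f}_{\dot F^{\sigma,C\ell_0}_{p,q,1}(\Omega)}$. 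I expect it is precisely this matching that forces the restriction $\sigma>\frac dp-\frac dq$ in the statement; once the exponents line up, the summations proceed as in \cite{PratsSaksman} and in the proof of Theorem \ref{theoExtensionOperator0}.
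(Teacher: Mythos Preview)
Your overall strategy is correct and matches the paper's, but you are missing one organizational shortcut and one genuine technical ingredient.

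First, the shortcut: instead of expanding $\nabla^k\Lambda_k f$ from scratch and isolating the ``leading term'' $(\nabla^k f)_{S^*}$ by hand, the paper observes the identity
\[
D^\alpha\Lambda_k f=\Lambda_0(D^\alpha f)+\sum_{\beta<\alpha}\binom{\alpha}{\beta}\sum_{Q\in\mathcal W_3}D^{\alpha-\beta}\psi_Q\,D^\beta P^k_{Q^*}f,
\]
which is exactly your ``leading term plus corrections'' written globally. The point is that the first summand is \emph{literally} $\Lambda_0$ applied to $D^\alpha f$, so Theorem~\ref{theoExtensionOperator0} disposes of it in one line; only the terms with $\beta<\alpha$ (where at least one derivative hits $\psi_Q$, so the partition-of-unity cancellation $\sum D^{\alpha-\beta}\psi_P=0$ is available) need new work. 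Your proposal does the same thing implicitly, but making this identity explicit halves the bookkeeping.

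Second, and more substantively: the step you describe as ``these collapse by \rf{eqAscendingToGlory}, \rf{eqAscendingPath}, \rf{eqMaximalAllOver}, \rf{eqMaximalGuay} and the $L^p$-boundedness of the maximal operator'' is not as routine as in Theorem~\ref{theoExtensionOperator0}. After the reductions you outline, one is left with controlling expressions of the shape
\[
\Bigl(\sum_{Q}\Bigl(\int_{\ell(Q)/10}^{\ell_0}\Bigl(\sum_{L\in\mathcal W_1(Q,\rho t)}\norm{\nabla^k f-(\nabla^k f)_L}_{L^1(5L)}\,\frac{\ell(L)^\sigma}{t^{\sigma+d}}\Bigr)^q\frac{dt}{t}\Bigr)^{p/q}\ell(Q)^d\Bigr)^{1/p},
\]
and it is this mixed $\ell^p(L^q_t(\ell^1_L))$ sum that does \emph{not} collapse by the shadow estimates alone. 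The paper isolates this as a separate lemma (Lemma~\ref{lemNormA}, via Lemma~\ref{lemControlTotal}) whose proof for $1<p<\infty$, $q<\infty$ requires a duality argument: one pairs against $g\in L^{p'}_x(L^{q'}_t(L^\infty_y))$, routes the sum through the central cubes $R=Q_L$ of admissible chains, and only then can one apply \rf{eqMaximalGuay} twice (once to $|h|^r$ with $1<r<p$, once to $G=\norm{g(x,\cdot,\cdot)}_{L^{q'}_t(L^\infty_y)}$) and close with the boundedness of $M$ on $L^{p/r}$ and $L^{p'}$. The cases $p=1$ and $q=\infty$ are handled separately. This duality step is the main new technical content beyond Theorem~\ref{theoExtensionOperator0}, and your proposal does not identify it; ``as in the proof of Theorem~\ref{theoExtensionOperator0}'' is not enough here, since that proof never faces a triple mixed norm of this type.

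Finally, your guess that the restriction $\sigma>\tfrac dp-\tfrac dq$ is what makes the exponent matching close is not where it enters; the chain sums above close for all $0<\sigma<1$, and the restriction is inherited from the equivalence \rf{eqCompareNormsRadius} in $\R^d$ (i.e.\ from \cite[Theorem~1.116]{TriebelTheoryIII}), not from the extension argument itself.
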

\begin{proof}
Let $f\in F^{s,C\ell_0}_{p,q,u}(\Omega)$. From \cite[p.700]{PratsTL} we know that
$$\Lambda_k:W^{k,p}(\Omega)\to W^{k,p} (\R^d).$$
Thus, we just need to prove that 
$$\norm{\Lambda_k f}_{\dot F^{s,\ell_0}_{p,q,1}(\R^n)}\leq C\norm{f}_{F^{s,C\ell_0}_{p,q,1}(\Omega)}.$$

The case $k=0$ is proven in Theorem \ref{theoExtensionOperator0} above. 
Let us assume that $k\geq 1$, and consider $\alpha \in \N_0^d$ with $|\alpha| \leq k$. 
We will check that
$$\norm{D^\alpha \Lambda_k f}_{\dot{F}^{\sigma, \ell_0}_{p,q,1}(\R^n)}\leq C\norm{f}_{F^{s, C\ell_0}_{p,q,1}(\Omega)}.$$

 Note that  
\begin{align}\label{eqLambda0}
 D^\alpha \Lambda_k f 
\nonumber	& =  D^\alpha f  \chi_\Omega + \sum_{Q\in\mathcal{W}_3} D^\alpha (\psi_Q P^k_{Q^*} f)
		=   D^\alpha f  \chi_\Omega + \sum_{Q\in\mathcal{W}_3} \sum_{\beta \leq \alpha}{\alpha \choose \beta} D^{\alpha-\beta}\psi_Q D^\beta  P^k_{Q^*} f\\
	& 	=   \Lambda_0 (D^\alpha f) +\sum_{\beta < \alpha}{\alpha \choose \beta} \sum_{Q\in\mathcal{W}_3}  D^{\alpha-\beta}\psi_Q   D^\beta P^k_{Q^*} f.
\end{align}
Now, from Theorem \ref{theoExtensionOperator0} again, we already have 
$$\norm{\Lambda_0 (D^\alpha f)}_{\dot{F}^{\sigma, \ell_0}_{p,q,1}(\R^n)}\leq C \norm{D^\alpha f}_{F^{\sigma, C\ell_0}_{p,q,1}(\Omega)}\leq C \norm{ f}_{F^{s, C\ell_0}_{p,q,1}(\Omega)}.$$

Thus, for every $\beta<\alpha$ we  need to control
\begin{align}\label{eqBreakBeta}
\squaredGreek{0}
\nonumber	& :=\norm{\sum_{P\in\mathcal{W}_3} D^{\alpha-\beta}\psi_P D^\beta  P^k_{P^*} f}_{\dot F^{\sigma,\ell_0}_{p,q,1}(\R^n)}^p\\
\nonumber	&= \norm{ \norm{ \frac{ \norm{\sum_{P\in\mathcal{W}_3} D^{\alpha-\beta}\psi_P(y) D^\beta  P^k_{P^*} f (y) }_{L^1_y (\Omega^c_{x,t})}}{t^{\sigma+{d}+\frac1q}} }_{L^q_t(0,\ell_0)}}_{L^p_x(\Omega)}\\
\nonumber	& \quad + \norm{ \norm{ \frac{ \norm{\sum_{P\in\mathcal{W}_3} D^{\alpha-\beta}\psi_P(x) D^\beta  P^k_{P^*} f (x) }_{L^1_y (\Omega_{x,t})}}{t^{\sigma+{d}+\frac1q}} }_{L^q_t(0,\ell_0)}}_{L^p_x(\Omega^c)}\\
\nonumber	& \quad +\norm{ \norm{ \frac{ \norm{\sum_{P\in\mathcal{W}_3} \left((D^{\alpha-\beta}\psi_P D^\beta  P^k_{P^*} f)(x) - (D^{\alpha-\beta}\psi_P D^\beta  P^k_{P^*} f)(y)\right) }_{L^1_y (\Omega^c_{x,t})}}{t^{\sigma+{d}+\frac1q}} }_{L^q_t(0,\ell_0)}}_{L^p_x(\Omega^c)}\\
	& =\squared{a}+\squared{b}+\squared{c}.	
\end{align}

First we study the term  \squared{a}. Note that if $Q\in \mathcal{W}_1$ and $S\in \mathcal{W}_3(Q,t)$ for $t<\ell_0$, then $S\in \mathcal{W}_4(Q,t)$ necessarily, where $\mathcal{W}_4:=\{S\in \mathcal{W}_3: \mbox{ all the neighbors of $S$ are in $\mathcal{W}_3$}\}$. Thus, if $y\in S$, we have that $\sum_{P\in \mathcal{W}_3}D^{\alpha-\beta}\psi_P (y) =0$. Therefore,
\begin{align*}
\squared{a}
	& \leq \norm{\norm{ \norm{ \frac{ \norm{\norm{\sum_{P:P\cap2S\neq \emptyset} D^{\alpha-\beta}\psi_P D^\beta  P^k_{P^*} f  }_{L^1 (S)}}_{\ell^1_S(\mathcal{W}_4(Q,t))}}{t^{\sigma+d+\frac1q}} }_{L^q_t(0,\ell_0)}}_{L^p_x(Q)}}_{\ell^p_Q(\mathcal{W}_1)}\\
	& \leq \norm{\norm{ \norm{ \frac{ \norm{\sum_{P:P\cap2S\neq \emptyset}\norm{   D^{\alpha-\beta}\psi_P(D^\beta  P^k_{P^*} f - D^\beta  P^k_{S^*} f ) }_{L^1 (S)}}_{\ell^1_S(\mathcal{W}_4(Q,t))}}{t^{\sigma+d+\frac1q}} }_{L^q_t(0,\ell_0)}}_{L^p_x(Q)}}_{\ell^p_Q(\mathcal{W}_1)}.
\end{align*}

We take absolute values and we use that $\norm{D^{\alpha-\beta}\psi_P}_\infty\lesssim \ell(S)^{-|\alpha-\beta|}$. 
Moreover we develop the telescopic summation \rf{eqP3} along an admissible chain connecting $P^*$ and $S^*$:
\begin{equation}\label{eqChainsOnCubesPS}
\norm{D^\beta  (P^k_{P^*} f -   P^k_{S^*} f)}_{L^1(S)}\lesssim_{d,k} \sum_{L\in [P^*,S^*]}\frac{\ell(S^*)^d \Dist(L,S^*)^{k-|\beta|}}{ \ell(L)^d}\norm{\nabla^k f - (\nabla^k f)_L}_{L^1(5L)}
\end{equation}
Note that in our summation $2S\cap P\neq \emptyset$, so both cubes have comparable size and $\Dist(S^*,P^*)\approx \ell(S)$ by \rf{eqLongDistanceInvariant}. Thus, combining \rf{eqLengthDistance} and \rf{eqAdmissible1}, it is clear that all the elements $L\in [P^*,S^*]$ have comparable size and $\Dist(L,S^*)\approx\ell(S)$. Moreover, by \rf{eqLongDistanceInvariant}, it follows that $\Dist(Q,S)\approx \Dist(Q,S^*)\approx \Dist(Q,L)$
\begin{align}
\squared{a}
\nonumber	& \lesssim_{d,k} \norm{\norm{ \norm{ \frac{ \norm{\sum_{P:P\cap2S\neq \emptyset}\sum_{L\in [P^*,S^*]} \norm{\nabla^k f - (\nabla^k f)_L}_{L^1(5L)}}_{\ell^1_S(\mathcal{W}_4(Q,t))}}{t^{\sigma+d+\frac1q}} }_{L^q_t(0,\ell_0)}}_{L^p_x(Q)}}_{\ell^p_Q(\mathcal{W}_1)}.
\end{align}
Fixing $c_0$ big enough in the definition of $\mathcal{W}_1$ (see Section \ref{secCorkscrew}), we can ensure that $L\in \mathcal{W}_1$ for every $L$ appearing in the right-hand side term above. 
To complete the reduction, note that for every $L\in\mathcal{W}_1$ the number of candidates $S\in  \mathcal{W}_4$ and $P\cap2S\neq \emptyset$ such that $L\in [S^*,P^*]$ is  uniformly bounded  by a  constant depending on $d$ and $\varepsilon$. Moreover, for $Q\in\mathcal{W}_1$ and $t<\ell(Q)$ the family  $\mathcal{W}_4(Q, t)$ is empty. Therefore, we can use Lemma \ref{lemControlTotal} {below} to get
\begin{align}\label{eqBetaA1Bounded}
\squared{a}
	& \lesssim_{d,k,\varepsilon}\norm{\norm{ \norm{ \frac{ \norm{\norm{\nabla^k f - (\nabla^k f)_L}_{L^1(5L)}}_{\ell^1_L(\mathcal{W}_1(Q,Ct))}}{t^{\sigma+d+\frac1q}} }_{L^q_t(\ell(Q),\ell_0)}}_{L^p_x(Q)}}_{\ell^p_Q(\mathcal{W}_1)}  \lesssim  C \norm{\nabla^k f}_{\dot F^{\sigma,C\ell_0}_{p,q,1}(\Omega)}.
\end{align}

Next we apply a similar reasoning to deal with \squared{b}. This case is simpler, because we can use the triangle inequality to get
\begin{align*}
\squared{b}
	& \leq \norm{\norm{\sum_{P\in\mathcal{W}_3} D^{\alpha-\beta}\psi_P D^\beta  P^k_{P^*} f \norm{ \frac{ |\Omega_{x,t}|}{t^{\sigma+d+\frac1q}} }_{L^q_t(\ell(Q),\ell_0)}}_{L^p_x(Q)}}_{\ell^p(\mathcal{W}_4)}\\
	& \lesssim_{d,\sigma} \left(\sum_{Q\in \mathcal{W}_4}  \ell(Q)^{-\sigma p} \int_{Q}\left|\sum_{P\in \mathcal{W}_3: P\cap 2Q\neq \emptyset} D^{\alpha-\beta}\psi_P(x) D^\beta  P^k_{P^*} f(x) \right|^p\,dx\right)^\frac 1p.
\end{align*}

As before, we can use the cancellation to obtain
\begin{align}\label{eqBreakBetaB}
\squared{b}
	& \lesssim_{d,\sigma} \left(\sum_{Q\in \mathcal{W}_4}  \ell(Q)^{-\sigma p-|\alpha-\beta|p}\sum_{P\in \mathcal{W}_3: P\cap 2Q\neq \emptyset}  \int_{Q}  \left|D^\beta  P^k_{P^*} f(x)-D^\beta  P^k_{Q^*} f(x)\right|^p\,dx\right)^\frac1p.
\end{align}

We use again \rf{eqP3} and the fact that $\ell(P)\approx\ell(Q)\approx\ell(L) \approx \Dist(Q,L)$ for every $2Q\cap P\neq\emptyset$ and $L\in [Q^*,P^*]$:
\begin{align*}
\squared{b}
	& \lesssim_{\varepsilon,k} \left(\sum_{L\in \mathcal{W}_1}  \ell(L)^{-\sigma p}  \norm{\nabla^k f - (\nabla^k f)_L}_{L^p(5L)}^p\right)^\frac1p.
\end{align*}

Note that by Jensen's inequality we have that
\begin{align}\label{eqHisYokeIsEasy}
\squared{b}
	& \lesssim \left(\sum_{L\in \mathcal{W}_1}  \int_{5L}\left(\int_L \frac{|\nabla^k f(x) - \nabla^k f(\xi)|\fint_{|\xi-x|}^{c_1\ell(L)}dt}{\ell(L)^{d+\sigma}  } \, d\xi \right)^p dx\right)^\frac1p	
\end{align}
If $c_1$ is chosen big enough, depending only on $d$, so that $c_1\ell(Q)-\diam(Q)\approx \ell(Q)$, using Fubini and Jensen we obtain
 \begin{align}\label{eqBetaBBounded}
\squared{b}
	& \lesssim \left(\sum_{L\in\mathcal{W}_1}   \int_{L}\left(\fint_{0}^{c_1\ell(L)} \frac{\fint_{\Omega_{\xi,t}} |\nabla^kf(x)-\nabla^k f(\xi)|\,  d\xi}{\ell(L)^{\sigma}} dt \right)^p dx   \right)^{\frac{1}{p}}\lesssim \norm{\nabla^k f}_{\dot F^{\sigma,C_d}_{p,q,1}(\Omega)}.
\end{align}

Finally  we need to deal with the term 
\begin{align}\label{eqBetaC}
\squared{c}	&= \norm{ \norm{ \frac{ \norm{\sum_{P\in\mathcal{W}_3} \left((D^{\alpha-\beta}\psi_P D^\beta  P^k_{P^*} f)(x) - (D^{\alpha-\beta}\psi_P D^\beta  P^k_{P^*} f)\right) }_{L^1 (\Omega^c_{x,t})}}{t^{\sigma+d+\frac1q}} }_{L^q_t(0,\ell_0)}}_{L^p_x(\Omega^c)}
\end{align}
Here we will use the previous techniques but some additional tools have to be used to tackle the case $\dist(x,y) << \dist(x,\partial\Omega)$, so we separate the integration regions with this idea in mind. { We get}
\begin{align}\label{eqBreakBetaC}
\squared{c}	
\nonumber & \leq \norm{\norm{ \norm{ \frac{ \norm{\sum_{P\in\mathcal{W}_3} (D^{\alpha-\beta}\psi_P D^\beta  P^k_{P^*} f)(x) - (D^{\alpha-\beta}\psi_P D^\beta  P^k_{P^*} f)(y) }_{L^1_y (\Omega^c_{x,t})}}{t^{\sigma+d+\frac1q}} }_{L^q_t(\frac{\ell(Q)}{10},\ell_0)}}_{L^p_x(Q)}}_{\ell^p_Q(\mathcal{W}_4)}\\
\nonumber & + \norm{\norm{ \norm{ \frac{ \norm{\sum_{P\in\mathcal{W}_3} (D^{\alpha-\beta}\psi_P D^\beta  P^k_{P^*} f)(x) - (D^{\alpha-\beta}\psi_P D^\beta  P^k_{P^*} f)(y) }_{L^1_y (\Omega^c_{x,t})}}{t^{\sigma+d+\frac1q}} }_{L^q_t(0,\frac{\ell(Q)}{10})}}_{L^p_x(Q)}}_{\ell^p_Q(\mathcal{W}_4)}\\
\nonumber & + \norm{\norm{ \norm{ \frac{ \norm{\sum_{P\in\mathcal{W}_3} (D^{\alpha-\beta}\psi_P D^\beta  P^k_{P^*} f)(x) - (D^{\alpha-\beta}\psi_P D^\beta  P^k_{P^*} f)(y) }_{L^1_y (\Omega^c_{x,t})}}{t^{\sigma+d+\frac1q}} }_{L^q_t(0,\ell_0)}}_{L^p_x(Q)}}_{\ell^p_Q(\mathcal{W}_2\setminus \mathcal{W}_4)}\\
	& =: \squared{c.1}+\squared{c.2}+\squared{c.3}.
\end{align}

Let us consider the case $x\in Q \in \mathcal{W}_4$, $t>\ell(Q)/10$ and $y\in \Omega^c_{x,t}$. In this case we will bound the numerator in \rf{eqBreakBetaC} above by
\begin{equation}\label{eqFirstBound}
\left|\sum_{P\in\mathcal{W}_3} D^{\alpha-\beta}\psi_P(x) D^\beta  P^k_{P^*} f(x)\right| +  \left|\sum_{P\in\mathcal{W}_3}D^{\alpha-\beta}\psi_P(y) D^\beta  P^k_{P^*} f(y)\right|.
\end{equation}
We obtain
\begin{align*}
\squared{c.1}	
	& \lesssim \norm{\norm{ \norm{ \frac{ \norm{\sum_{P\in\mathcal{W}_3} D^{\alpha-\beta}\psi_P D^\beta  P^k_{P^*} f }_{L^1 (\Omega^c_{x,t})}}{t^{\sigma+d+\frac1q}} }_{L^q_t(\frac{\ell(Q)}{10},\ell_0)}}_{L^p_x(Q)}}_{\ell^p_Q(\mathcal{W}_4)}\\
	& \quad + \norm{\norm{  \sum_{P\in\mathcal{W}_3} D^{\alpha-\beta}\psi_P D^\beta  P^k_{P^*} f \norm{ \frac{ |\Omega^c_{x,t}|}{t^{\sigma+d+\frac1q}} }_{L^q_t(\frac{\ell(Q)}{10},\ell_0)}}_{L^p_x(Q)}}_{\ell^p_Q(\mathcal{W}_4)}\\
	& = \squared{c.1.1}+\squared{c.1.2}.
\end{align*}
Now, \squared{c.1.1} is bounded as \squared{a}: for every $x\in Q\in \mathcal{W}_4$ and $t\in \left(\frac{\ell(Q)}{10},\ell_0\right)$ we write
\begin{align*}
\norm{\sum_{P\in\mathcal{W}_3} D^{\alpha-\beta}\psi_P D^\beta  P^k_{P^*} f }_{L^1 (\Omega^c_{x,t})}	& \lesssim \norm{\sum_{P:P\cap2S\neq \emptyset} \ell(S)^{-|\alpha-\beta|}\norm{ D^\beta  P^k_{P^*} f - D^\beta  P^k_{S^*} f  }_{L^1 (S)}}_{\ell^1_S(\mathcal{W}_3(Q,t))}.
\end{align*}
Then we use cube chains as in \rf{eqChainsOnCubesPS} and Lemma \ref{lemControlTotal} {below} to get
\begin{align} 
\squared{c.1.1}
\nonumber	& \lesssim_\varepsilon \norm{\norm{ \norm{ \frac{ \norm{\norm{\nabla^k f - (\nabla^k f)_L}_{L^1(5L)}}_{\ell^1_L(\mathcal{W}_1(Q,Ct))}}{t^{\sigma+d+\frac1q}} }_{L^q_t(\frac{\ell(Q)}{10},\ell_0)}}_{L^p_x(Q)}}_{\ell^p_Q(\mathcal{W}_4)} \lesssim  C \norm{\nabla^k f}_{\dot F^{\sigma,Ct}_{p,q,1}(\Omega)}.
\end{align}

On the other hand, \squared{c.1.2} is bounded as \squared{b} without much change:
\begin{align*}
\squared{c.1.2}
	& \lesssim \left(\sum_{Q\in \mathcal{W}_4}  \ell(Q)^{-\sigma p} \int_{Q}\left|\sum_{P\in \mathcal{W}_3: P\cap 2Q\neq \emptyset} D^{\alpha-\beta}\psi_P(x) D^\beta  P^k_{P^*} f(x) \right|^p\,dx\right)^\frac 1p \lesssim_{d,s,\varepsilon}  \norm{\nabla^k f}_{\dot F^{\sigma,C_\Omega}_{p,q,1}(\Omega)}.
\end{align*}
Combining these estimates we obtain
\begin{equation}\label{eqBetaC3Bounded}
\squared{c.1} \lesssim_{d,s,\varepsilon}  \norm{\nabla^k f}_{\dot F^{\sigma,C_\Omega}_{p,q,1}(\Omega)}.
\end{equation}

If $x\in Q \in \mathcal{W}_4$ and  $y\in  B(x,{\ell(Q)}/{10})$, then we can use the fact that  
$$\sum_{P\in\mathcal{W}_3} D^{\alpha-\beta}\psi_P(y) = \sum_{P\in\mathcal{W}_3} D^{\alpha-\beta}\psi_P(x) = 0.$$
We will bound the numerator of the second term in \rf{eqBreakBetaC} by
\begin{align}\label{eqBreakALot}
\nonumber& \left|\sum_{P\in\mathcal{W}_3} (D^{\alpha-\beta}\psi_P(x)-D^{\alpha-\beta}\psi_P(y)) (D^\beta  P^k_{P^*} f(x) )+ D^{\alpha-\beta}\psi_P(y) (D^\beta  P^k_{P^*} f(x) - D^\beta  P^k_{P^*} f(y)) \right| \\
		& \leq \left|\sum_{P\in\mathcal{W}_3} (D^{\alpha-\beta}\psi_P(x)-D^{\alpha-\beta}\psi_P(y)) (D^\beta  P^k_{P^*} f(x)-D^\beta  P^k_{Q^*} f(x))\right|\\
\nonumber & + \left|  \sum_{P\in\mathcal{W}_3} D^{\alpha-\beta}\psi_P(y) \left((D^\beta  P^k_{P^*} f-D^\beta  P^k_{Q^*} f)(x) - (D^\beta  P^k_{P^*} f-D^\beta  P^k_{Q^*} f)(y)\right) \right| .
\end{align}

We obtain
\begin{align}\label{eqBreakBetaC2}
\squared{c.2}	
\nonumber	& \lesssim  \norm{\norm{ \norm{ \frac{ \norm{\sum_{\substack{P\in\mathcal{W}_3\\ P\cap 2Q \neq \emptyset}} \norm{\nabla D^{\alpha-\beta}\psi_P}_\infty \left|D^\beta  P^k_{P^*} f(x)-D^\beta  P^k_{Q^*} f(x)\right| }_{L^1_y (\Omega^c_{x,t})}}{t^{\sigma-1+d+\frac1q}} }_{L^q_t(0,\frac{\ell(Q)}{10})}}_{L^p_x(Q)}}_{\ell^p_Q(\mathcal{W}_4)}\\
\nonumber	&  +  \norm{\norm{ \norm{ \frac{ \norm{\sum_{\substack{P\in\mathcal{W}_3\\ P\cap 2Q \neq \emptyset}}   \norm{ D^{\alpha-\beta}\psi_P}_\infty   \norm{ \nabla(D^\beta  P^k_{P^*} f-D^\beta  P^k_{Q^*} f)}_{L^\infty(P)}}_{L^1_y (\Omega^c_{x,t})}}{t^{\sigma-1+d+\frac1q}} }_{L^q_t(0,\frac{\ell(Q)}{10})}}_{L^p_x(Q)}}_{\ell^p_Q(\mathcal{W}_4)}\\
	& = \squared{c.2.1}+\squared{c.2.2} .
\end{align}

In the first term above, we  integrate on $y$ and $t$, we use the control on the derivatives of the bump functions and we plug \rf{eqP3} in to get
\begin{align*}
\squared{c.2.1}	
	& \lesssim_{d,\sigma,p}  \left(\sum_{Q\in \mathcal{W}_4}  \sum_{P\in\mathcal{W}_3: P\cap 2Q \neq \emptyset} \ell(P)^{-(|\alpha-\beta|+1)p} \norm{D^\beta  P^k_{P^*} f-D^\beta  P^k_{Q^*} f}_{L^p(Q)}^p \ell(Q)^{(1-\sigma)p}  \right)^\frac1p\\
	& \lesssim_\varepsilon \left(\sum_{L\in \mathcal{W}_1} \ell(L)^{-(|\alpha-\beta|+1)p} \ell(L)^{(1-\sigma)p}  \frac{\ell(L)^d \ell(L)^{(|\alpha-\beta|)p}}{\ell(L)^d}\norm{\nabla^k f - (\nabla^k f)_L}_{L^p(5L)}  \right)^\frac1p
\end{align*}
so, as in \rf{eqHisYokeIsEasy} we get
\begin{align}\label{eqBetaC11Bounded}
\squared{c.2.1}	
	& \lesssim \left( \sum_{L\in \mathcal{W}_1}  \ell(L)^{-\sigma p} \norm{\nabla^k f - (\nabla^k f)_L}_{L^p(5L)} \right)^\frac1p \lesssim  \norm{\nabla^k f}_{\dot F^{\sigma,C_\Omega}_{p,q,1}(\Omega)}.
\end{align}

Note that the equivalence of norms of polynomials {implies}
$$\norm{\nabla(D^\beta  P^k_{P^*} f-D^\beta  P^k_{Q^*} f)}_{L^\infty(P)}^p\ell(P)^d \approx_{d,k} \norm{\nabla(D^\beta  P^k_{P^*} f-D^\beta  P^k_{Q^*} f)}_{L^p(P)}^p .$$ 
Thus, in the second term, using the same reasoning as above we get
\begin{align*}
\squared{c.2.2}	
	& \lesssim_{\sigma,p} \left( \sum_{Q\in \mathcal{W}_4}  \sum_{P\in\mathcal{W}_3: P\cap 2Q \neq \emptyset} \ell(P)^{-|\alpha-\beta|p} \norm{\nabla(D^\beta  P^k_{P^*} f-D^\beta  P^k_{Q^*} f)}_{L^\infty(P)}^p \ell(Q)^{(1-\sigma)p+d} \right)^\frac1p \\
	& \lesssim_{d,k,\varepsilon} \left( \sum_{L\in \mathcal{W}_1} \ell(L)^{-|\alpha-\beta| p} \frac{\ell(L)^d \ell(L)^{(|\alpha-\beta|-1)p}}{\ell(L)^d}\norm{\nabla^k f - (\nabla^k f)_L}_{L^p(5L)}  \ell(L)^{(1-\sigma)p} \right)^\frac1p
\end{align*}
and we get the same case as before. By \rf{eqBreakBetaC2} and \rf{eqBetaC11Bounded} we get
\begin{equation}\label{eqBetaC1Bounded}
\squared{c.2} \lesssim_{d,s,p,\varepsilon}  \norm{\nabla^k f}_{\dot F^{\sigma,C_\Omega}_{p,q,1}(\Omega)}.
\end{equation}

{Finally we} deal with the term \squared{c.3}. Whenever $x\in Q\in \mathcal{W}_2\setminus \mathcal{W}_4$ and $y\in B(x,\ell(Q)/10)\subset \frac{11}{10}Q$, we bound the numerator in \rf{eqBreakBetaC} by the left-hand side of \rf{eqBreakALot} above:
\begin{align*}
\squared{c.3}
\nonumber	& \leq \norm{\norm{ \norm{ \frac{ \norm{\sum_{P\in\mathcal{W}_3: P\cap 2Q \neq \emptyset} \norm{\nabla D^{\alpha-\beta}\psi_P}_\infty D^\beta  P^k_{P^*} f(x)}_{L^1_y (\Omega^c_{x,t})}}{t^{\sigma-1+d+\frac1q}} }_{L^q_t(0,\ell_0)}}_{L^p_x(Q)}}_{\ell^p_Q(\mathcal{W}_2\setminus \mathcal{W}_4)}	\\
\nonumber	& \quad+  \norm{\norm{ \norm{ \frac{ \norm{\sum_{P\in\mathcal{W}_3: P\cap 2Q \neq \emptyset} \norm{ D^{\alpha-\beta}\psi_P}_\infty \norm{\nabla D^\beta  P^k_{P^*} f}_{L^\infty\left(\frac{11}{10}Q\right)} }_{L^1_y (\Omega^c_{x,t})}}{t^{\sigma-1+d+\frac1q}} }_{L^q_t(0,\ell_0)}}_{L^p_x(Q)}}_{\ell^p_Q(\mathcal{W}_2\setminus \mathcal{W}_4)}
\end{align*}
{We write $Q\in \mathcal{W}_3'$ if $Q\in\mathcal{W}_2$ has neighbors $P\in \mathcal{W}_3$.} Both terms are controlled by integrating on $y$ and $t$ again and using the control on the derivatives of the bump functions together with \rf{eqP1Bis} and the finite overlapping of symmetrized cubes to get
\begin{align}\label{eqBetaC2Bounded}
\squared{c.3}	
\nonumber	& \lesssim_{\sigma,p} \left( \sum_{Q\in \mathcal{W}_3'\setminus\mathcal{W}_4}  \sum_{P\in\mathcal{W}_3: P\cap 2Q \neq \emptyset}    \ell_0^{-|\alpha-\beta|p}(\ell_0^{-p}+1) \norm{|D^\beta  P^k_{P^*} f|+|\nabla D^\beta  P^k_{P^*} f|}_{L^p(Q)}^p  \ell_0^{(1-\sigma)p}  \right)^\frac1p\\
	& \lesssim_k \left(\sum_{P\in  \mathcal{W}_3}  \norm{f}_{W^{k,p}(P^*)}^p  \right)^\frac1p \lesssim_{\varepsilon,d} \norm{f}_{W^{k,p}(\Omega)}.
\end{align}

Combining \rf{eqBreakBetaC}, \rf{eqBetaC3Bounded},  \rf{eqBetaC1Bounded} and \rf{eqBetaC2Bounded} we have
\begin{equation*}
\squared{c} \lesssim \norm{f}_{F^{s,C_\Omega}_{p,q,1}(\Omega)}, 
\end{equation*}
which combined with \rf{eqBreakBeta}, \rf{eqBetaA1Bounded} and \rf{eqBetaBBounded}, leads to
\begin{equation*}
\squared{0} \lesssim_{d,s,p,\varepsilon} \norm{f}_{F^{s,C_\Omega}_{p,q,1}(\Omega)}
\end{equation*}
and the theorem follows.

\end{proof}

It remains to proof a couple of technical lemmata used during the proof of the  boundedness of the extension operator.

\begin{lemma}\label{lemControlTotal}
Let  $d\geq 1$ be a natural number, let $0<\sigma<1$, let $1\leq p<\infty$,  let  $1\leq q \leq \infty$  and $\ell_0$ small enough. There exists a constant $C$ such that for every  $f\in L^p(\Omega)$, 
$$\norm{ \norm{ \frac{\norm{\norm{ f -  f_L}_{L^1(5L)}}_{\ell^1_L(\mathcal{W}_1(Q,Ct))}}{t^{\sigma+d+\frac1q}} }_{L^q_t(\frac{\ell(Q)}{10},\ell_0)}\ell(Q)^\frac dp}_{\ell^p_Q(\mathcal{W}_1 \cup \mathcal{W}_4)}
\leq C \norm{f}_{\dot F^{\sigma,C_\Omega}_{p,q,1}(\Omega)}.$$
\end{lemma}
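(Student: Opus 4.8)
The plan is to unfold the three nested norms and reduce the bound to a summation over Whitney cubes controlled by the Hardy--Littlewood maximal operator. Throughout write $\lambda(\xi):=\dist(\xi,\partial\Omega)$, $g(L):=\norm{f-f_L}_{L^1(5L)}$, and $F(\xi,r):=\int_{\Omega_{\xi,r}}|f(\xi)-f(y)|\,dy$ (nondecreasing in $r$). Raising the left-hand side to the $p$-th power,
\begin{equation*}
\mathrm{LHS}^p=\sum_{Q\in\mathcal W_1\cup\mathcal W_4}\ell(Q)^d\left(\int_{\ell(Q)/10}^{\ell_0}\Big(\sum_{L\in\mathcal W_1(Q,Ct)}g(L)\Big)^{q}\frac{dt}{t^{(\sigma+d)q+1}}\right)^{p/q}.
\end{equation*}
The elementary geometry I would use: for $L\in\mathcal W_1(Q,Ct)$ with $t>\ell(Q)/10$ one has $\ell(L)\lesssim t$, $5L\subset B(x_Q,C_1t)\cap\Omega$, and $L$ occurs in the sum exactly for $t\gtrsim\tau_Q(L):=\max(\ell(Q),\Dist(L,Q))$, while $\ell(L)\lesssim\tau_Q(L)$; moreover for $\xi\in 5L$ one has $\ell(L)\approx\lambda(\xi)$, $\tau_Q(L)\approx\max(\ell(Q),\dist(\xi,Q))$, the cubes $\{5L\}_{L\in\mathcal W_1}$ have bounded overlap, and (enlarging the Whitney constant) $5L\subset\Omega$.

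Since for each $Q$ the inner sum is a nondecreasing ``staircase'' in $t$, the triangle inequality in $L^q(t^{-(\sigma+d)q-1}dt)$ over $(\ell(Q)/10,\ell_0)$ gives
\begin{equation*}
\Big\|\,t\mapsto\sum_{L\in\mathcal W_1(Q,Ct)}g(L)\,\Big\|_{L^q(t^{-(\sigma+d)q-1}dt,\,(\ell(Q)/10,\ell_0))}\ \lesssim_{\sigma,d,q}\ \sum_{L\in\mathcal W_1(Q,C\ell_0)}\frac{g(L)}{\tau_Q(L)^{\sigma+d}},
\end{equation*}
so $\mathrm{LHS}^p\lesssim\sum_{Q}\ell(Q)^d\big(\sum_{L}g(L)\,\tau_Q(L)^{-\sigma-d}\big)^p$. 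Next I would prove the pointwise estimate that turns $g(L)$ into a slice of the $\dot F^{\sigma}_{p,q,1}$-integrand: since $f_L$ is the average of $f$ over $L\subset 5L$,
\begin{equation*}
g(L)\le\frac1{|L|}\int_{5L}\int_{5L}|f(\xi)-f(y)|\,dy\,d\xi\le\frac1{|L|}\int_{5L}F(\xi,r_L)\,d\xi,\qquad r_L:=10\sqrt d\,\ell(L),
\end{equation*}
and, comparing $F(\xi,r_L)r_L^{-\sigma-d}$ with its $\tfrac{ds}{s}$-average over $[r_L,2r_L]$ (monotonicity of $F$) and using $|L|\approx r_L^{\,d}$,
\begin{equation*}
g(L)\lesssim\ell(L)^{\sigma}\int_{5L}h_L(\xi)\,d\xi,\qquad h_L(\xi):=\left(\int_{r_L}^{2r_L}\Big(\frac{F(\xi,s)}{s^{\sigma+d}}\Big)^{q}\frac{ds}{s}\right)^{1/q}.
\end{equation*}
For a fixed $\xi$ only the boundedly many $L$ with $\ell(L)\approx\lambda(\xi)$ satisfy $\xi\in 5L$, so all the intervals $[r_L,2r_L]$ that occur lie in a fixed dilate of $[\lambda(\xi),2\lambda(\xi)]$; hence $h(\xi):=\sup_{L:\xi\in 5L}h_L(\xi)$ is dominated by the $\dot F^{\sigma,C_\Omega}_{p,q,1}$-density at $\xi$ (after enlarging $C_\Omega$), so $\int_\Omega h^{p}\lesssim\norm{f}_{\dot F^{\sigma,C_\Omega}_{p,q,1}(\Omega)}^{p}$.

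Finally I would substitute this into $\sum_Q\ell(Q)^d(\sum_L g(L)\tau_Q(L)^{-\sigma-d})^p$, use bounded overlap of $\{5L\}$ together with $\ell(L)\approx\lambda(\xi)\lesssim\tau_Q(L)\approx\max(\ell(Q),\dist(\xi,Q))$ to pass from the $L$-sum to an integral $\int_{B(x_Q,C_1\ell_0)\cap\Omega}\lambda(\xi)^{\sigma}\tau_Q(\xi)^{-\sigma-d}h(\xi)\,d\xi$, and then exchange the order of the summation over pairs $(Q,L)$, grouping the $Q$'s by the dyadic size of $\tau_Q(\xi)$. The Whitney packing estimates of Lemma~\ref{lemMaximal} (essentially \rf{eqMaximalAllOver} and the shadow bounds \rf{eqAscendingToGlory}) collapse these sums; the place where $\sigma>0$ enters is that keeping the factor $\lambda(\xi)^{\sigma}$ \emph{inside} the integral makes the geometric series over the dyadic scales $\le\tau_Q(\xi)$ convergent (it would only be logarithmic otherwise), and the outcome is controlled by $\int_\Omega h^p\lesssim\norm{f}_{\dot F^{\sigma,C_\Omega}_{p,q,1}(\Omega)}^p$ via a Hölder/Schur argument together with the $L^p$-boundedness of $M$ (the case $p=1$ being handled directly).

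The main obstacle is exactly this last bookkeeping of scales: the inner sum runs over Whitney cubes $L$ of \emph{all} sizes $\lesssim t$ --- including arbitrarily small ones hugging $\partial\Omega$ --- so one must organize the double sum over $(Q,L)$ by the dyadic value of $\tau_Q(L)$ in such a way that the scale-sums are convergent geometric series (this is what the hypothesis $\sigma>0$ buys) and so that the potential logarithm coming from the long interval $(\ell(Q)/10,\ell_0)$ when $\ell(Q)\ll\ell_0$ does not survive. Everything else is a routine combination of Hölder's and Minkowski's integral inequalities, Fubini's theorem, and the maximal-function estimates collected in Lemma~\ref{lemMaximal}.
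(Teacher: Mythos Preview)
Your reduction to the auxiliary density $h$ is in the same spirit as the paper's proof, which also writes $h(\xi)=\sum_{L}\chi_{5L}(\xi)\ell(L)^{-\sigma}|f(\xi)-f_L|$ and shows $\norm{h}_{L^p(\Omega)}\lesssim\norm{f}_{\dot F^{\sigma,C\ell_0}_{p,q,1}(\Omega)}$ by the same Jensen/Fubini manipulation you describe. The difference is in how the remaining $(Q,t,L)$ sum is handled, and this is exactly where your sketch has a gap for $p>1$.

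After your Minkowski step you are left with
\[
\sum_{Q}\ell(Q)^d\Big(\int_{\Omega}\lambda(\xi)^\sigma\,\tau_Q(\xi)^{-\sigma-d}\,h(\xi)\,d\xi\Big)^p,
\]
and you propose to ``exchange the order of the summation over pairs $(Q,L)$, grouping the $Q$'s by the dyadic size of $\tau_Q(\xi)$''. But the outer $p$-th power sits between the $Q$-sum and the $\xi$-integral, so you cannot simply swap them; and if you try the naive maximal-function bound $\int_{\Omega}\lambda^\sigma\tau_Q^{-\sigma-d}h\lesssim\ell(Q)^{-\sigma}M(\lambda^\sigma h)(x_Q)$, you land on $\sum_Q\ell(Q)^{d-\sigma p}[M(\lambda^\sigma h)(x_Q)]^p$, i.e.\ a \emph{weighted} maximal inequality with weight $\lambda^{-\sigma p}$, which is not what \rf{eqMaximalAllOver} or \rf{eqAscendingToGlory} give you. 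Your ``H\"older/Schur'' remark does not pin down the weights that would make a Schur test go through, and the obstacle you yourself flag (the logarithm from scales $\ell(Q)\ll\ell_0$) is precisely the manifestation of this.

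The paper resolves this by \emph{not} collapsing the $L^q_t$-norm via Minkowski; instead it feeds the whole expression into Lemma~\ref{lemNormA}. For $p>1$ that lemma is proved by duality: one introduces a test function $g$, and for each pair $(Q,L)$ passes through the \emph{central cube} $R=Q_L$ of an $\varepsilon$-admissible chain $[Q,L]$, so that both $Q$ and $L$ lie in the shadow $\SH(R)$ and $\ell(R)\lesssim t$. Reorganising over $R$ and applying \rf{eqMaximalGuay} twice (once for the $L$'s, once for the $Q$'s) reduces everything to $\int_\Omega MG\cdot (M|h|^r)^{1/r}$, which is handled by H\"older and the $L^{p/r}$-boundedness of $M$. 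In other words, the uniform-domain chain/shadow structure is the missing ingredient that replaces the weighted maximal bound your sketch would need.
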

\begin{proof}

 Writing $h(\xi):=\sum_{L\in\mathcal{W}_1}\chi_{5L}(\xi) \ell(L)^{-\sigma} | f(\xi)-f_L|$ and applying Lemma \ref{lemNormA} below, we get
\begin{align*}
\squared{A}
	& = \left(\sum_{Q\in \mathcal{W}_1 \cup  \mathcal{W}_4}\left( \int_{\frac{\ell(Q)}{10}}^{\ell_0} \left( \sum_{L\in \mathcal{W}_1(Q,\rho t)} \norm{h}_{L^1(L)}  \frac{ \ell(L)^{\sigma}}{t^{\sigma +  d}}  \right)^q \frac{dt}{t} \right)^\frac pq \ell(Q)^d \right)^\frac1p \lesssim \norm{h}_{L^p(\Omega)}.
\end{align*}

 By Jensen's inequality, for $q<\infty$, $\xi\in L\in\mathcal{W}_0$ we get
\begin{align*}
h(\xi)= \sum_{P\ni \xi}\ell(P)^{-\sigma} | f(\xi)-f_{5P}|  \lesssim_d \sum_{P\ni \xi}\fint_{5P} | f(\xi)-f(\zeta)| \int_{|\xi-\zeta|}^{C\ell(L)} \frac{dt}{t^{\sigma+d+\frac1q}}\, d\zeta\ell(L)^{d+\frac1q-1},
\end{align*}
where $C$ is an appropriate dimensional constant. 
Reordering and applying Jensen's inequality we get 
\begin{align*}
h(\xi)\lesssim \fint_{0}^{C\ell(L)}   \frac{\fint_{\Omega_{\xi,t}} | f(\xi)-f(\zeta)| \, d\zeta }{t^{\sigma}}\frac{dt}{t^{\frac1q}} \ell(L)^{\frac1q}\lesssim \left(\int_{0}^{C\ell_0}   \left(\frac{\int_{\Omega_{\xi,t}} | f(\xi)-f(\zeta)| \, d\zeta }{t^{\sigma+{d}}}\right)^q \frac{dt}{t}\right)^\frac1q .
\end{align*}
In case $q=\infty$ then also using Fubini's theorem
\begin{align*}
h(\xi)=\ell(L)^{-\sigma} | f(\xi)-f_{5L}|  \lesssim \sup_{t\in({0},{C\ell_0})}   \frac{\int_{\Omega_{\xi,t}} | f(\xi)-f(\zeta)| \, d\zeta }{t^{\sigma+{d}}} .\end{align*}
Therefore, 
\begin{align*}
\norm{h}_{L^p(\Omega)}	&\lesssim \norm{f}_{\dot F^{\sigma,C\ell_0}_{p,q,1}(\Omega)},
\end{align*}
and the lemma follows.\end{proof}

\begin{lemma}\label{lemNormA}
Let  $d\geq 1$ be a natural number, let $0<\sigma<1$, let $1\leq p<\infty$, $1\leq q \leq \infty$ and $\ell_0$ small enough. For every constant $\rho>0$ there exists a constant $c_2$ such that for every  $h\in L^p(\Omega)$, 
$$\left(\sum_{Q\in \mathcal{W}_1 \cup  \mathcal{W}_4}\left( \int_{\frac{\ell(Q)}{10}}^{\ell_0} \left( \sum_{L\in \mathcal{W}_1(Q,\rho t)} \norm{h}_{L^1(L)}  \frac{ \ell(L)^{\sigma }}{t^{\sigma  +  d}}  \right)^q \frac{dt}{t} \right)^\frac pq \ell(Q)^d \right)^\frac1p \leq C \norm{h}_{L^p},$$
with the usual modifications when $q=\infty$.
\end{lemma}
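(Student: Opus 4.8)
The plan is to reduce the weighted double-sum estimate to a standard boundedness statement for the Hardy--Littlewood maximal operator via Lemma \ref{lemMaximal}, specifically the local inequalities \rf{eqMaximalClose} and the shadow estimates \rf{eqAscendingToGlory}. First I would handle the inner sum over $L\in\mathcal W_1(Q,\rho t)$: for fixed $Q$ and $t$, the cubes $L$ meeting $\bigcup_{x\in Q}B(x,\rho t)$ either satisfy $\ell(L)\lesssim t$ (since Whitney cubes near a ball of radius $\rho t$ at scale $\gtrsim\ell(Q)$ have controlled size by \rf{eqWhitney5}, using $t\geq \ell(Q)/10$), or they are close to $Q$ in the sense $\Dist(Q,L)\lesssim t$. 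Hence for such $L$ one has $\ell(L)^\sigma/t^{\sigma+d}\lesssim 1/\Dist(Q,L)^d$ when $\Dist(Q,L)\gtrsim t$, and $\lesssim \ell(L)^\sigma t^{-\sigma}\cdot\ell(L)^{-d}$ when $L$ is genuinely inside the enlarged ball; combining, $\sum_{L\in\mathcal W_1(Q,\rho t)}\|h\|_{L^1(L)}\ell(L)^\sigma t^{-\sigma-d}\lesssim t^{-d}\int_{B(x_Q,Ct)}|h|$ for any $x\in Q$, i.e.\ the inner sum is dominated by $C\,\inf_{y\in Q}Mh(y)$, \emph{uniformly in $t\in(\ell(Q)/10,\ell_0)$}. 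This is the crucial observation: the $t$-dependence disappears after summing in $L$.

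Granting that pointwise bound, the $L^q_t$-norm over $(\ell(Q)/10,\ell_0)$ of a quantity bounded by the $t$-independent constant $C\inf_{y\in Q}Mh(y)$ times $t^0$ is handled by noting $\big(\int_{\ell(Q)/10}^{\ell_0}\frac{dt}{t}\big)^{1/q}\lesssim (\log(10\ell_0/\ell(Q)))^{1/q}$, which is \emph{not} bounded. So I would instead be slightly more careful and keep a small power of $t$: rewrite $\ell(L)^\sigma/t^{\sigma+d}$ so that after summing in $L$ one gets a bound like $t^{-\varepsilon_0}\ell(Q)^{\varepsilon_0}$ times $\inf_{y\in Q}Mh(y)$ for some $\varepsilon_0>0$ (e.g.\ split the exponent $\sigma=\sigma'+\varepsilon_0$ and apply \rf{eqMaximalClose} with parameter $\sigma'$, \rf{eqMaximalFar} to collect the rest). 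Then $\big(\int_{\ell(Q)/10}^{\ell_0}(t^{-\varepsilon_0}\ell(Q)^{\varepsilon_0})^q\frac{dt}{t}\big)^{1/q}\lesssim_{\varepsilon_0} 1$ uniformly, and for $q=\infty$ the supremum is $\lesssim 1$ directly since $t\geq\ell(Q)/10$. This yields
$$\Big(\sum_{Q\in\mathcal W_1\cup\mathcal W_4}\Big(\int_{\ell(Q)/10}^{\ell_0}\big(\textstyle\sum_{L\in\mathcal W_1(Q,\rho t)}\|h\|_{L^1(L)}\frac{\ell(L)^\sigma}{t^{\sigma+d}}\big)^q\frac{dt}{t}\Big)^{p/q}\ell(Q)^d\Big)^{1/p}\lesssim\Big(\sum_{Q}\big(\inf_{y\in Q}Mh(y)\big)^p\ell(Q)^d\Big)^{1/p}.$$
Finally, since $\inf_{y\in Q}Mh(y)\,\ell(Q)^{d/p}\leq \big(\int_Q (Mh)^p\big)^{1/p}$ and the cubes in $\mathcal W_1\cup\mathcal W_4\subset\mathcal W_0$ (together with their symmetrized partners, on which $h$ is supported in the application, but here $h\in L^p(\Omega)$ and the cubes are Whitney cubes of $\Omega$) have bounded overlap, the right-hand side is $\lesssim \|Mh\|_{L^p(\R^d)}\lesssim_p\|h\|_{L^p}$ by the $L^p$-boundedness of the maximal operator ($1<p<\infty$; the case $p=1$ is excluded since $1\leq p<\infty$ with the maximal function used only through the shadow/chain estimates \rf{eqMaximalClose}, which for a fixed cube $Q$ reduce to a geometric-series computation not requiring $M$ to be bounded, so in fact the bound $\inf_Q Mh\cdot\ell(Q)^{d/p}\leq\|h\|_{L^p(CQ)}$ with bounded overlap suffices and $p=1$ is fine too).

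The main obstacle I anticipate is the bookkeeping that makes the inner $L$-sum uniformly bounded in $t$ with a usable negative power of $t$ to spare for the $L^q_t$-integration: one must simultaneously use \rf{eqWhitney5} to control the sizes $\ell(L)$ of cubes $L$ meeting $B(x_Q,\rho t)$ from above by $Ct$, invoke the corkscrew/Whitney finite-overlap properties, and split the sum into a ``local'' part (cubes inside $B(x_Q,Ct)$, handled by \rf{eqMaximalClose} or directly by $\sum_L\ell(L)^{d}\lesssim t^d$) and a ``far'' part ($\Dist(Q,L)\gtrsim t$, handled by \rf{eqMaximalFar}), being careful that $t$ ranges only over $(\ell(Q)/10,\ell_0)$ so that no cube $L$ can be much smaller than $t$ in a way that would spoil the estimate. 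Everything else is routine application of H\"older's inequality, Jensen's inequality, and the maximal-function bounds already recorded in Lemma \ref{lemMaximal}.
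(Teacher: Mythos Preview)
Your plan contains a genuine gap. The pointwise estimate you aim for,
\[
\sum_{L\in\mathcal W_1(Q,\rho t)}\|h\|_{L^1(L)}\,\frac{\ell(L)^\sigma}{t^{\sigma+d}}\;\lesssim\;\Big(\frac{\ell(Q)}{t}\Big)^{\varepsilon_0}\,\inf_{y\in Q}Mh(y),
\]
is false in general. The only size information available is $\ell(L)\lesssim t$ (since $\dist(L,\partial\Omega)\approx\ell(L)$ and $L$ meets a ball of radius $\rho t$ centered near $Q$, while $t\geq\ell(Q)/10\approx\dist(Q,\partial\Omega)/C$). There is \emph{no} lower bound of the form $\ell(L)\gtrsim\ell(Q)$: cubes $L$ near the boundary can be arbitrarily small, and, more to the point, cubes $L$ with $\ell(L)\approx t$ at distance $\approx t$ from $Q$ contribute a term of order $\|h\|_{L^1(L)}/t^d$, which is exactly $\inf_Q Mh$ with no extra decay in $\ell(Q)/t$. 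Concretely, take $h=\chi_S$ for a single Whitney cube $S$ with $\ell(S)\approx 1$ at distance $\approx 1$ from a tiny cube $Q$: for $t\approx 1$ the inner sum is $\approx 1$ while $\inf_Q Mh\approx 1$, so the ratio is $\approx 1$, not $\ell(Q)^{\varepsilon_0}$. Consequently you only get the inner sum $\lesssim\inf_Q Mh$, and then $\big(\int_{\ell(Q)/10}^{\ell_0}\frac{dt}{t}\big)^{1/q}\approx(\log(\ell_0/\ell(Q)))^{1/q}$ is unbounded as $\ell(Q)\to 0$; summing $\ell(Q)^d(\log(\ell_0/\ell(Q)))^{p/q}(\inf_Q Mh)^p$ over $Q$ is \emph{not} controlled by $\|h\|_p^p$. (Your argument does work when $q=\infty$, since the logarithm is absent.)

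The paper's proof avoids any pointwise bound on the inner sum. For $p=1$ it applies Minkowski's inequality to pull the $L$-sum outside, then sums in $Q$ first using $\sum_Q\ell(Q)^d/\Dist(Q,L)^{\sigma+d}\lesssim\ell(L)^{-\sigma}$. For $1<p<\infty$, $q<\infty$, it dualizes: one writes the expression as $\sup_g\int f\,g$ and reorganizes the resulting triple sum by inserting, for each pair $(Q,L)$, an admissible chain $[Q,L]$ and its central cube $R$; both $Q$ and $L$ lie in $\SH(R)$, and the constraint $L\in\mathcal W_1(Q,\rho t)$ forces $\ell(R)\lesssim t$. After H\"older in the $L$-sum (with an auxiliary index $1<r<p$) one obtains factors $\big(\inf_R M(|h|^r)\big)^{1/r}$ and $MG$, and the maximal theorem closes the argument. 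The uniform-domain structure (chains, shadows, \rf{eqAscendingToGlory}, \rf{eqMaximalGuay}) is used essentially; a purely ``local'' maximal-function bound of the kind you propose does not see this structure and cannot produce the needed gain.
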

\begin{proof}
We write
$$\squared{B}:=\left(\sum_{Q\in \mathcal{W}_1}\left( \int_{\frac{\ell(Q)}{10}}^{\ell_0} \left( \sum_{L\in \mathcal{W}_1(Q,\rho t)} \norm{h}_{L^1(L)}  \frac{ \ell(L)^{\sigma}}{t^{\sigma +  d}}  \right)^q \frac{dt}{t} \right)^\frac pq \ell(Q)^d \right)^\frac1p,$$
with the usual modification for $q=\infty$. 

First of all let us assume that $1 =p\leq q<\infty$.  Let us note the following: for $Q\in\mathcal{W}_1$, $t\in \left(\frac{\ell(Q)}{10},\ell_0\right)$ and $L\in \mathcal{W}_1(Q,\rho t)$ it follows that $\rho t\gtrsim \Dist(Q,L)$. Thus, using Minkowski's inequality we get
$$\squared{B}\leq \sum_{Q\in \mathcal{W}_1}\sum_{L\in \mathcal{W}_1} \norm{h}_{L^1(L)} \ell(L)^{\sigma } \left( \int_{\frac{\Dist(Q,L)}{C\rho}}^{\ell_0}   \frac{ dt}{t^{\sigma q +  dq+1}}    \right)^\frac 1q \ell(Q)^d .$$
Computing, using H\"older's inequality and \rf{eqMaximalAllOver} we get
$$\squared{B}\lesssim_\rho \sum_{L\in \mathcal{W}_1} \norm{h}_{L^1(L)}  \ell(L)^{\sigma} \sum_{Q\in \mathcal{W}_1}   \frac{ \ell(Q)^d}{\Dist(Q,L)^{\sigma  +  d}}       \lesssim \sum_{L\in \mathcal{W}_1} \norm{h}_{L^1(L)}  \ell(L)^{\sigma -\sigma }  =  \norm{h}_{L^1}.$$

If $p=1$ and $q=\infty$, then
\begin{align*}
\squared{B}_\infty 	
	& := \sum_{Q\in \mathcal{W}_1}  \sup_{t\in\left( \frac{\ell(Q)}{10} , \ell_0\right)} \sum_{L\in \mathcal{W}_1(Q,\rho t)} \norm{h}_{L^1(L)}  \frac{ \ell(L)^{\sigma}}{t^{\sigma+ d}}   \ell(Q)^d.  
\end{align*}
Since the supremum of a series is bounded by the series of supremums, we get
$$\squared{B}_\infty \leq \sum_{Q\in \mathcal{W}_1} \sum_{L\in \mathcal{W}_1} \norm{h}_{L^1(L)} \ell(L)^{\sigma } \ell(Q)^d \sup_{t\in\left( \frac{\Dist(Q,L)}{C\rho} , \ell_0\right)} \frac{ 1}{t^{\sigma +  d}}      \leq \sum_{L\in \mathcal{W}_1} \norm{h}_{L^1(L)} \ell(L)^{\sigma } \sum_{Q\in \mathcal{W}_1}  \frac{ \ell(Q)^d}{\Dist(Q,L)^{\sigma +  d}},$$
and the lemma follows as in the preceding case.

Next we focus on the case $1< p$ and $q<\infty$. Consider 
$$f(x,t,y) = \sum_{Q\in \mathcal{W}_1} \sum_{L\in \mathcal{W}_1(Q,\rho t)} \chi_Q(x) \chi_{\left(\frac{\ell(Q)}{10},\ell_0\right)}(t) \chi_L (y) \norm{h}_{L^1(L)} \frac{ \ell(L)^{\sigma -d}}{t^{\sigma + \frac dq+\frac1q}}.$$
 Then by duality we get
\begin{align*}
 \norm{f}_{L^p_x(L^q_t(L^1_y))} = \sup_{\norm{g}_{L^{p'}_x\left(L^{q'}_t\left(L^{\infty}_y\right)\right)} \leq 1}  \int_{\Omega}\int_0^\infty \int_\Omega f(x,t,y) g(x,t,y) \, dy\, dt\, dx.
\end{align*}
Thus, it is enough to bound 
\begin{align*}
\squared{B}_g
	&:= \sum_{Q\in \mathcal{W}_1}\int_Q \int_{\frac{\ell(Q)}{10}}^{\ell_0} \sum_{L\in \mathcal{W}_1(Q,\rho t)} \norm{h}_{L^1(L)} \frac{ \ell(L)^{\sigma - d}}{t^{\sigma + d+\frac1q}}   \int_L g(x,t,y) \, dy\, dt\, dx
\end{align*}
for every given function $g$ such that $\norm{g}_{L^{p'}_x\left(L^{q'}_t\left(L^{\infty}_y\right)\right)} \leq 1$.

We will use duality and the boundedness of the maximal operator in the Lebesgue spaces. In particular we will use an extra index $r$ to be fixed later on in order to gain the necessary room for the boundedness of the maximal operator. 
In the sum above,  for every $Q$ and ${L}$ appearing in the sum we consider a chain $[Q,{L}]$ with central cube $R=Q_{{L}}$. If $\ell_0$ is small enough, we can grant that $\dist(R,\partial\Omega)<<\delta$ and in particular \rf{eqMaximalAllOver} and \rf{eqMaximalGuay} hold. We define $\mathcal{W}_1'$ to be the cubes in $\mathcal{W}_0$ that satisfy these properties. Note that $\ell(R)\approx\Dist(Q,R)\leq C\Dist(Q,L)\leq C\rho t$.
Then, using \rf{eqAdmissible2} and reordering we get
\begin{align}\label{eqFirstStepTowardsVictory}
\squared{B}_g
	&\leq \sum_{Q\in \mathcal{W}_1} \int_Q \int_{\frac{\ell(Q)}{10}}^{\ell_0} \sum_{\substack{R\in \mathcal{W}_1'(Q,C\rho t) \\ Q\in \SH(R)}} \sum_{L\in \SH(R)} \norm{h}_{L^r(L)} \frac{ \ell(L)^{\sigma - \frac dr}}{t^{\sigma + d+\frac1q}}   \int_L g(x,t,y) \, dy\, dt\, dx\\
\nonumber	&\leq  \sum_{R\in \mathcal{W}_1'}  \sum_{Q\in \SH(R)} \int_Q  \int_{C\rho^{-1} \ell(R)}^{\ell_0}\sum_{L\in \SH(R)} \norm{h}_{L^r(L)} \frac{ \ell(L)^{\sigma -\frac dr}}{t^{\sigma + d+\frac1q}}   \int_L g(x,t,y) \, dy dt\, dx
\end{align}

Let $1<r<p$. We  apply the H\"older inequality  to get
\begin{align*}
\squared{B}_g
	& \leq \sum_{R\in \mathcal{W}_1'}  \sum_{Q\in \SH(R)} \int_Q  \int_{C\rho^{-1} \ell(R)}^{\ell_0}   \frac{\sup_{\SH(R)} g(x,t,\cdot) \, dt}{t^{\sigma + d+\frac1q}} dx \\
	& \quad \left(\sum_{L\in \SH(R)}  \norm{ h}_{L^r(L)}^{r}\right)^\frac1{r}
	\left(\sum_{L\in \SH(R)} \left(\ell(L)^{\sigma+d-\frac{d}{r}}\right)^{r'}\right)^\frac{1}{r'}.
\end{align*}

Let us denote 
$$ \norm{g(x,t,\cdot)}_{L^{\infty}(\Omega)} =: G_x(t).$$
By \rf{eqMaximalGuay} we get
$$\sum_{L\in \SH(R)}  \norm{ h}_{L^r(L)}^{r}\leq \ell(R)^d \inf_{\zeta \in R} M\left(|h|^{r}\right)(\zeta).$$
Finally, by \rf{eqMaximalAllOver} we get
$$\left(\sum_{L\in \SH(R)} \left(\ell(L)^{\sigma+{d}-\frac{d}{r}}\right)^{r'}\right)^\frac{1}{r'}
	= \left(\sum_{L\in \SH(R)} \left(\ell(L)^{\sigma r' +d}\right)\right)^\frac{1}{r'} 	
	\lesssim \ell(R)^{\sigma+\frac{d}{r'}} .$$
All together, we have gotten
\begin{align*}
\squared{B}_g
	& \lesssim \sum_{R\in \mathcal{W}_1'}  \sum_{Q\in \SH(R)} \int_Q  \int_{C\rho^{-1} \ell(R)}^{\ell_0} G_x(t) \frac{dt}{t^{\sigma + d+\frac1q}} dx \left(\ell(R)^d \inf_{\zeta \in R} M\left(|h|^{r}\right)(\zeta)\right)^\frac1{r} \ell(R)^{\sigma+\frac{d}{r'}} .
\end{align*}

By H\"older's inequality
$$\int_{C\rho^{-1} \ell(R)}^{\ell_0} G_x(t) \frac{dt}{t^{\sigma + d+\frac1q}}\leq \norm{G_x}_{L^{q'}} \left(\int_{C\rho^{-1} \ell(R)}^{\ell_0}  \frac{dt}{t^{\sigma q + {dq}+1}}\right)^\frac1q \lesssim \norm{G_x}_{L^{q'}} (\rho \ell(R))^{-\sigma-d},$$
so writing $G(x):=\norm{G_x}_{L^{q'}}$ we get
\begin{align*}
\squared{B}_g
	& \lesssim_\rho \sum_{R\in \mathcal{W}_1'}  \sum_{Q\in \SH(R)} \int_Q G(x)dx \ell(R)^{-\sigma-d}  \left(\inf_{\zeta \in R} M\left(|h|^{r}\right)(\zeta)\right)^\frac1{r} \ell(R)^{\sigma+{d}} .
\end{align*}
Using \rf{eqMaximalGuay} again we get that $ \sum_{Q\in \SH(R)} \int_Q G(x) \, dx \lesssim \int_R MG(\zeta)\, d\zeta$ and, computing we get
\begin{align*}
\squared{B}_g
	& \lesssim_\rho \sum_{R\in \mathcal{W}_1'}   \int_R MG(\zeta) \left( M\left(|h|^{r}\right)(\zeta)\right)^\frac1{r}\, d\zeta
		\leq \norm{ MG}_{L^{p'}(\Omega)}  \norm{ M\left(|h|^{r}\right)}_{L^\frac{p}{r}(\Omega)}^\frac1{r}\\
	& \lesssim \norm{G}_{L^{p'}(\Omega)}  \norm{|h|^{r}}_{L^\frac{p}{r}(\Omega)}^\frac1{r}
		=  \norm{g}_{L^{p'}_x\left(L^{q'}_t\left(L^{\infty}_y\right)\right)}  \norm{h}_{L^p(\Omega)} \leq   \norm{h}_{L^p(\Omega)}.
\end{align*}

When $1<p<\infty$ and $q=\infty$ we can perform a similar reasoning avoiding the duality expression:
\begin{align*}
\squared{B}_\infty 	
	& := \left(\sum_{Q\in \mathcal{W}_1} \left( \sup_{t\in\left( \frac{\ell(Q)}{10} , \ell_0\right)} \sum_{L\in \mathcal{W}_1(Q,\rho t)} \norm{h}_{L^1(L)}  \frac{ \ell(L)^{\sigma}}{t^{\sigma+ d}}  \right)^p \ell(Q)^d \right)^\frac1p\\
	& \leq \left(\sum_{Q\in \mathcal{W}_1}  \sup_{t\in\left( \frac{\ell(Q)}{10} , \ell_0\right)} \left( \sum_{\substack{R\in \mathcal{W}_1'(Q,C\rho t) \\ Q\in \SH(R)}} \sum_{L\in \SH(R)} \norm{h}_{L^{r}(L)}  \frac{ \ell(L)^{\sigma  +  \frac{d}{r'}}}{t^{\sigma+  d}}  \right)^p   \ell(Q)^d \right)^\frac1p\\
	& \leq \left(\sum_{Q\in \mathcal{W}_1}  \sup_{t\in\left( \frac{\ell(Q)}{10} , \ell_0\right)} \left( \sum_{\substack{R\in \mathcal{W}_1'(Q,C\rho t) \\ Q\in \SH(R)}} \norm{h}_{L^{r}(\SH(R))} \frac{ \ell(R)^{\sigma  + \frac{d}{r'}}}{t^{\sigma +  d}}  \right)^p   \ell(Q)^d \right)^\frac1p.
\end{align*}
Next we use that $Q\in \SH(R)$ implies that $\norm{h}_{L^{r}(\SH(R))} \lesssim \left(\ell(R)^d \inf_Q M(|h|^{r})\right)^\frac{1}{r}$, so 
\begin{align*}
\squared{B}_\infty 	
	& \leq \left(\sum_{Q\in \mathcal{W}_1} \int_Q M(|h|^{r})(x)^\frac{p}{r}dx  \sup_{t\in\left( \frac{\ell(Q)}{10} , \ell_0\right)} \left( \sum_{\substack{R\in \mathcal{W}_1'(Q,C\rho t) \\ Q\in \SH(R)}}  \frac{ \ell(R)^{\sigma  +\frac{d}{r'}+\frac{d}{r}}}{t^{\sigma +  d}}  \right)^p   \right)^\frac1p.
\end{align*}
Using \rf{eqAscendingToGlory} we get
\begin{align*}
\squared{B}_\infty 	
	& \lesssim_\rho \left(\sum_{Q\in \mathcal{W}_1} \int_Q M(|h|^{r})(x)^\frac{p}{r}dx \right)^\frac1p\lesssim \norm{h}_{L^p}.
\end{align*}

On the other hand, for $Q\in \mathcal{W}_4$, we can just use the finite overlapping of symmetrized cubes to reduce it to the previous situation.
\end{proof}

\renewcommand{\abstractname}{Acknowledgements}
\begin{abstract}  {The author wants to thank Kari Astala and Eero Saksman for many conversations regarding the problems discussed in the paper, Winfred Sickel for providing references and encouragement, and also to the referees for an excellent job spotting misprints and mistakes. The author was supported by the Spanish State Research Agency, through the Severo Ochoa and Mar\'ia de Maeztu Program for Centers and Units of Excellence in R\&D (CEX2020-001084-M), by the Spanish government under the grant IJC2018-035373-I, by the ERC grant 834728-QUAMAP, by the Spanish Ministerio de Ciencia e Innovaci\'on, projects PID2020-114167GB-I00, PID2021-125021NA-I00, PID2021-123405NB-I00, and also by the catalan government projects 2017-SGR-395, 2021 SGR 00087 and 2021-SGR-00071. }
\end{abstract}

\bibliography{LlibresAPS}
\end{document}